\documentclass[11pt]{article}
\usepackage[utf8]{inputenc}
\usepackage[includeheadfoot,a4paper, centering,scale=0.8,headheight=15pt]{geometry}
\usepackage{fancyhdr}
\usepackage{lipsum}
\usepackage{lastpage}
\usepackage[dvipsnames,svgnames]{xcolor}
\usepackage[strict]{changepage} %
\usepackage{framed} %
\usepackage{graphicx}
\usepackage{amssymb}
\usepackage{latexsym}
\usepackage{mathtools}
\usepackage{multirow}
\usepackage{textcomp} 
\usepackage{amsthm}
\usepackage[all]{xy}
\usepackage{color}

\usepackage{blindtext}
\usepackage{textcomp}
\usepackage[alphabetic]{amsrefs}
\usepackage[colorlinks,linkcolor=red,anchorcolor=yellow,citecolor=blue]{hyperref}

\usepackage[T1]{fontenc}
\usepackage{lmodern}
\usepackage[theoremfont]{newpxtext}
\usepackage[type1]{sourcesanspro}
\usepackage[varg]{newpxmath}
\usepackage{bm}
\linespread{1.05}

\usepackage[scr=boondox,  
            cal=esstix]   
           {mathalpha}
\usepackage{verbatim}

\usepackage{titlesec}
\titleformat{\section}
  {\normalfont\fontsize{14}{16}\bfseries\scshape}{\thesection}{1em}{}

\pagestyle{fancy}
\fancyhf{} 

\fancyhead{}
\fancyhead[C]{\scshape\footnotesize \leftmark}
\fancyfoot{}
\fancyfoot[C]{\footnotesize 
 Page \thepage\ of \pageref{LastPage}}


\newtheoremstyle{break}
{\baselineskip} 
{\baselineskip}
  {} 
  {}
  {\bfseries}
  {}
  {\newline}
        {}
\theoremstyle{break}
\newtheorem{theo}{Theorem}[section]

\numberwithin{equation}{section}
\newtheorem{theorem}[theo]{Theorem}
\newtheorem{definition}[theo]{Definition}
\newtheorem{lemma}[theo]{Lemma}
\newtheorem{prop}[theo]{Proposition}
\newtheorem*{prop*}{Proposition}
\newtheorem{example}[theo]{Example}
\newtheorem{corol}[theo]{Corollary}

\newtheorem{remark}[theo]{Remark}

\newcommand{\bijar}[1][]{%
    \ar[#1]
    \ar@<0.7ex>@{}[#1]|-*[@]{\sim}} 

\newcommand{\locr}[1]{\mathcal{O}_{ #1 }}
\newcommand{\tx}[1]{\text{ #1}}

\newcommand{\dlog}[1]{\text{dlog}(#1)}

\newcommand{\len}[1]{\text{len}( #1)}

\DeclareMathOperator{\lcm}{lcm}



\title{\bfseries \scshape Brauer $p$-dimension and Kato's Swan Conductor}
\author{Yizhen Zhao}
\date{\today}

\begin{document}

\maketitle

\begin{abstract}
We use Kato's Swan conductor to study the Brauer $p$-dimension of fields of characteristic $p>0$. We mainly investigate two types of fields: henselian discretely valued fields and semi-global fields. While investigating the Brauer $p$-dimension of semi-global fields, we use a Gersten-type sequence to analyse the ramification behavior of a Brauer class in a $2$-dimensional regular local ring. Using this result, we give a partial result on the Brauer $p$-dimension of function fields of algebraic curves over $\bar{k}((t))$ with good reduction.
\end{abstract}
\tableofcontents
\section{Introduction}

Let $k$ be a field. For any $k$-central simple algebra $A$, we denote by $\textup{per}(A)$ the order of its class in the Brauer group $\textup{Br}(k)$ (called the period) and by $\textup{ind}(A)$ its index which is the gcd of all the degrees of the finite splitting fields. It is well-known that $\textup{per}(A)$ divides $\textup{ind}(A)$, and these two integers have the same prime factors. So the period is bounded by the index and the index is bounded above by a power of the period. We use notion of the Brauer dimension to make this relationship precise. For $n\in \mathbb{N}$, denote by $A[n]$ the $n$-torsion part of the group $A$. Then, for a prime $p$, define the Brauer dimension at $p$ as follows
 \[
\textup{Br.dim}_p(k):=\min\limits_d\left\{
     \begin{array}{cl}
  \text{ind}(A)\mid \text{per}(A)^d & {\textup{for any}\  A\in \text{Br}(k)[p^n] \ \text{and} \ n\in \mathbb{N};} \\
  \infty & {\textup{otherwise}.}
 \end{array}
 \right.
    \]
Then define the Brauer dimension of $k$ to be 
\[\textup{Br.dim}(k)=\sup_p\left\{\textup{Br.dim}_p(k)\right\}.
\]
The period-index problem of a field $k$ is to investigate the Brauer dimension of $k$.

One important class of fields for the period-index problem is that of the $C_m$ fields. The class of $C_m$ fields includes the class of function fields of dimension $m$ algebraic varieties over an algebraic closed field. Michael Artin \cite{MR0657430} conjectured that $\text{Br.dim}(k)\leq 1$ for a $C_2$ field $k$. This conjecture has been proved in many cases by several authors. This conjecture has a natural extension to all $C_m$ fields: Is it true that $\text{Br.dim}(k)\leq m - 1$ for any $C_m$ field $k$? Many recent works on the period-index problem have been motivated by this question. The progress on the period-index problems was discussed in \cite{MR3413868}. 

We next discuss our results. Before stating our theorems, let us recall some known results. 

\begin{enumerate}
\item In \cite{MR2545681}, Harbater, Hartmann and Krashen prove the following: If $F$ is a complete discretely valued field with residue field $k$ such that $\text{Br.dim}_p(l)\leq d$ for all finite extension $l/k$ and all primes $p\neq \text{char}(k)$, then $\text{Br.dim}_p(F)\leq d+1$ for all primes $p\neq \text{char}(k)$.
\item In \cite{MR1462850}, Saltman proves the following: For a finitely generated field $F$ of transcendence degree $1$ over an $l$-adic field, $\text{Br.dim}_p(F)=2$ for every prime $p\neq l$. 
\end{enumerate}

In this paper, we focus on the case which is {\it orthogonal} to these previous cases: We consider the Brauer dimension at $p$ for fields of characteristic $p$. In this case, we prove two theorems corresponding to the two results above.

The first result is the following:

\begin{theorem}
\label{firstthe}
Let $F$ be a henselian discretely valued field of characteristic $p>0$ with the residue field $k$ such that $[k:k^p]=p$ and $\text{Br.dim}_p(l)=0$ for any finite extension $l/k$. Then we have $\text{Br.dim}_p(F)=1$.
\end{theorem}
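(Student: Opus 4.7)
The plan is to combine a dévissage on the exponent of a Brauer class with an induction on Kato's Swan conductor, ultimately presenting each $p$-primary class of $\textup{Br}(F)$ as a single Artin-Schreier cyclic algebra of the right degree. Since every central simple $F$-algebra of degree $p$ in characteristic $p$ is Artin-Schreier cyclic, the condition $\textup{ind}(\alpha) \mid p$ for $\alpha \in \textup{Br}(F)[p]$ is equivalent to exhibiting $\alpha$ as a single symbol $[a,b) \in H^2_p(F)$; the Artin-Schreier extension $F(\wp^{-1}(a))/F$ of degree dividing $p$ then splits $\alpha$.

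\textbf{Dévissage on the exponent.} For $\alpha \in \textup{Br}(F)[p^n]$ induct on $n$. If $n \geq 2$, the class $p\alpha$ has period $p^{n-1}$ and by induction admits a splitting field $L/F$ with $[L:F] \mid p^{n-1}$. The field $L$ is again henselian discretely valued, and its residue field $k_L$ is a finite extension of $k$; both hypotheses transfer to $k_L$, since the $p$-rank is invariant under finite extensions in characteristic $p$ and the residue-field Brauer dimension hypothesis is inherited by transitivity of finite extensions. Over $L$ the restriction $\alpha|_L$ is $p$-torsion, so the problem reduces to the case $n=1$.

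\textbf{Induction on the Swan conductor.} Given $\alpha \in H^2_p(F)$ with Swan conductor $s = \textup{sw}(\alpha)$, induct on $s$ using Kato's filtration $\{\text{fil}_s H^2_p(F)\}_{s \geq 0}$. For $s = 0$, Kato's tame exact sequence
\[
0 \to H^2_p(k) \to \text{fil}_0 H^2_p(F) \xrightarrow{\partial} H^1_p(k) \to 0,
\]
combined with $H^2_p(k) = \textup{Br}(k)[p] = 0$, forces $\alpha$ to be determined by its residue $\partial\alpha \in k/\wp(k)$; any lift $\tilde a \in \mathcal{O}_F$ together with a uniformizer $\pi$ then gives $\alpha = [\tilde a, \pi)$. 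For $s \geq 1$, fix a $p$-basis $\{t\}$ of $k$ (afforded by $[k:k^p]=p$) and a lift $t \in \mathcal{O}_F^\times$. Kato's identification of $\text{fil}_s/\text{fil}_{s-1}$ with a quotient of $\Omega^1_k = k \cdot dt$ shows that the leading term of $\alpha$ at level $s$ is representable by a single Artin-Schreier symbol whose second slot is determined by the choice of $t$ (and $\pi$ when $p \mid s$). Subtracting this leading symbol strictly lowers the Swan conductor, and by induction the residue is itself a single symbol; the bilinearity identities
\[
[c, b) + [c', b) = [c+c', b), \qquad [c, b) + [c, b') = [c, b b'),
\]
then collapse the resulting short sum to a single Artin-Schreier symbol.

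\textbf{Main obstacle.} The delicate point is coordinating the slots across Swan-conductor levels so that the bilinearity identities actually apply to merge the symbols produced by the induction. This coordination relies essentially on $[k:k^p] = p$, which reduces the differential forms controlling the graded pieces to a single $k$-line and thereby constrains the second slot of the leading symbol at each level. A careful analysis is needed when $p \mid s$, where Kato's formula introduces logarithmic differentials and forces a combined slot involving both the $p$-basis element and the uniformizer; the base case, in turn, is exactly where $\textup{Br.dim}_p(k)=0$ clears the only potential obstruction in $H^2_p(k)$.
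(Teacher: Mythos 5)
Your overall architecture is the same as the paper's: the exponent d\'evissage is Theorem \ref{takao} combined with Proposition \ref{inse} (that finite extensions of $F$ inherit the hypotheses), and the $n=1$ case is handled by induction on Kato's Swan conductor through the filtration of Theorem \ref{sbr}, with the base case $\mathrm{sw}=0$ cleared exactly as you say by $\mathrm{Br}(k)[p]=0$ and the tame residue sequence. The d\'evissage and base case are fine. The gap is in your inductive step for $s\geq 1$. You propose to subtract a ``leading symbol'' representing the class of $\alpha$ in $M_s/M_{s-1}$, apply the inductive hypothesis to the remainder to get a single symbol, and then merge the two symbols by bilinearity. But the two symbols produced this way need not share a slot: the leading symbol at level $s$ has second slot $t$ (or $\pi$, or $e\pi$, depending on whether $p\mid s$), while the symbol returned by the inductive hypothesis for the remainder comes with an uncontrolled second slot --- for instance a sum $[a,t)+[c,\pi)$ admits no bilinearity collapse. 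You flag this ``coordination of slots'' as the main obstacle, but flagging it is not resolving it, and it is precisely the technical heart of the theorem; as written, the induction does not close.

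The paper's proof (Theorem \ref{henselkato}) avoids the subtract-and-merge pattern entirely. The inductive invariant is that $\omega$ is congruent modulo $M_i$ to a \emph{single} symbol in one of the canonical forms $[\tfrac{f}{\pi^{pm}},e\pi)$ or $[\tfrac{c}{\pi^{n}},g)$, and the inductive step shows that the level-$i$ discrepancy can be absorbed by perturbing the entries of that \emph{same} symbol --- replacing $\pi$ by $\pi(1+h\pi^{pm-i})$ or $g$ by $g(1+e\pi^{n-i})$ and adjusting $f$ or $c$ to high $\pi$-adic order. The computation that such a perturbation realizes an arbitrary element of $M_i/M_{i-1}$ is Lemma \ref{swift}, i.e.\ $a\,\mathrm{dlog}(1+b)\equiv -ab\,\mathrm{dlog}(a) \pmod{M_{i-2j}}$, together with the fact that $[k:k^p]=p$ makes the graded pieces one-dimensional in the appropriate sense ($M_i/M_{i-1}\cong\Omega^1_k\cong k\,dt$ for $p\nmid i$, and $\cong k/k^p$ for $p\mid i$ since $\Omega^1_k/Z^1_k=0$). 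If you want to complete your argument, you should replace ``subtract the leading symbol and recurse'' by this perturbation step; without it, or some equivalent mechanism forcing the recursively produced symbol to share a slot with the leading one, the proof is incomplete.
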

This result was first proved by Kato \cite{MR550688} for the $p$-torsion classes in the case of complete discretely valued fields. The result can be generalized to the henselian discretely valued case directly. For the proof of our next result, we need this extension to the henselian case. 

The key ingredient in the proof of Theorem \ref{firstthe} is Kato's Swan conductor (Definition \ref{defsw}). Kato's Swan condutor is defined for an element in the $p$-torsion part of the Brauer group of a field $k$ over characteristic $p$. It measures the {\it wildness} of a Brauer class and this phenomenon exists only over characteristic $p$. We also define an extended version of the Swan conductor that depends on a choice of a model (say $\mathcal{X}$) and call it the $\mathcal{X}$-Swan conductor in our second result based on Kato's Swan conductor for a discretely valued field.

We discuss the definition of the $\mathcal{X}$-Swan conductor now. Let $X$ be a nonsingular projective curve over ${k}((t))$, where ${k}$ is an algebraically closed field of characteristic $p$. Suppose there exists a model  $\mathcal{X}$ of $X$ over $\text{Spec}\ k[[t]]$ with good reduction. The purity theorem for Brauer groups implies that the Brauer group of $X$ is a subgroup of $\text{Br}~k(X)\cong \text{Br}~k(\mathcal{X})$. Let $\omega\in \textup{Br}(X)[p]$. Again by the purity of Brauer groups, $\omega$ ramifies only along the closed fiber $Y$ of $\mathcal{X}$ over $(t)$, where $Y\cong\mathcal{X}\times \text{Spec}~ k$. Then we define the  $\mathcal{X}$-Swan conductor of $\omega$ to be Kato's Swan conductor of $\omega$ with respect to the valuation given by $Y$. In this article, we use the $\mathcal{X}$-Swan conductor as a technical tool. We denote it by $\text{sw}_\mathcal{X}(\omega)$. With this definition of the $\mathcal{X}$-Swan conductor, we have the following second theorem.
\begin{theorem}[Theorem \ref{swp}] \label{geoperind} 
Let $X$ be a smooth projective curve over $k((t))$ where $k$ is an algebraically closed fields of characteristic $p$. Suppose there is a model $\mathcal{X}$ over $k[[t]]$ with good reduction. Suppose that $\omega\in \textup{Br}(X)[p]$ satisfies $\textup{sw}_{\mathcal{X}}(\omega)< p$. Then $\textup{per}(\omega)=\textup{ind}(\omega)$.    
\end{theorem}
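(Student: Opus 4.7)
The plan is to reduce to Theorem~\ref{firstthe} applied at the generic point $y$ of the closed fiber $Y\subset \mathcal X$, and then to globalize the resulting local splitting using the hypothesis $\text{sw}_{\mathcal X}(\omega)<p$ together with a Gersten-type analysis on the two-dimensional regular scheme $\mathcal X$. Since $\omega\in\text{Br}(X)[p]$ we have $\text{per}(\omega)\mid p$; because period and index share the same prime factors, it suffices to exhibit a degree-$p$ extension $L/K$ of the function field $K:=k(\mathcal X)$ that splits $\omega$, which in characteristic $p$ amounts to producing an Artin-Schreier extension.

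First I would verify that Theorem~\ref{firstthe} applies to the henselization $K_y^h$ of $K$ at $y$. Its residue field $k(Y)$ is the function field of the smooth projective curve $Y/k$, so $[k(Y):k(Y)^p]=p$; and by Tsen's theorem $\text{Br}(l)=0$, hence $\text{Br.dim}_p(l)=0$, for every finite extension $l/k(Y)$. Theorem~\ref{firstthe} then gives $\text{ind}_{K_y^h}(\omega)\le p$ and, by Albert's cyclicity theorem in characteristic $p$, produces an Artin-Schreier splitting $z^p-z=\alpha_y$ over $K_y^h$. Second, I would lift $\alpha_y$ to a globally defined element $\alpha\in K$ using Kato's explicit description of the graded pieces $\fil_s/\fil_{s-1}$ of his Swan filtration on $\text{Br}(K_y^h)[p]$: when $0<s<p$ these pieces are generated by symbolic classes whose Artin-Schreier parameter has the form $a/t^s$ for some $a\in\mathcal O_{\mathcal X,y}$, hence lies already in $K$. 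A downward induction on the Swan conductor, all of whose values are bounded by $\text{sw}_{\mathcal X}(\omega)<p$, then furnishes a global $\alpha\in K$ such that $L:=K[z]/(z^p-z-\alpha)$ is a degree-$p$ extension agreeing locally at $y$ with the splitting field produced above. Third, to confirm that $L$ splits $\omega$ everywhere on $\mathcal X$ rather than merely at $y$, I would invoke the Gersten-type sequence at each closed point $P\in Y$ relating $\text{Br}(\mathcal O_{\mathcal X,P})[p]$ with $\text{Br}(K)[p]$ via residues at height-one primes: pulling back to $L$ annihilates the residue of $\omega$ along $Y$ by construction, while by the purity theorem for Brauer groups on $X=\mathcal X\setminus Y$ the class $\omega$ has no residue at any other height-one prime of $\mathcal X$. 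Hence $\omega_L$ is unramified on the normalization $\mathcal X_L$, and properness of $\mathcal X_L$ together with the Tsen-triviality of the Brauer group of its residue curve forces $\omega_L=0$.

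The main obstacle is the second step, namely making precise the passage from the local Artin-Schreier parameter $\alpha_y$ over $K_y^h$ to a global one $\alpha\in K$. This is where the hypothesis $\text{sw}_{\mathcal X}(\omega)<p$ is truly essential: were the Swan conductor allowed to reach $p$, the graded pieces of Kato's filtration would acquire additional terms indexed by logarithmic differentials (since then $p$ can divide $s$), and these cannot be spread out from the henselization without introducing new ramification elsewhere on $\mathcal X$. Verifying that the normal forms for $\fil_s/\fil_{s-1}$ with $s<p$ globalize to Artin-Schreier data compatible with the integral model $\mathcal X$, and that the resulting global extension concentrates its ramification along $Y$, is the technical heart of the argument.
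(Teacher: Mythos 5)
Your proposal follows a genuinely different route from the paper, and as written it has a real gap at its center. The paper never constructs a single global degree-$p$ splitting field. Instead it invokes the Harbater--Hartmann--Krashen patching theorem (Theorem \ref{patching}), which reduces the statement to bounding $\textup{ind}(D\otimes F_\zeta)$ separately over the overfields $F_U$ (for components $U$ of $Y$ minus a finite set) and $F_P$ (for closed points $P$). The bound over $F_U$ comes, as in your first step, from Theorem \ref{sc1} applied at the generic point of $Y$; the bound over $F_P$ is Theorem \ref{localperind}, proved by computing the Gersten-type sequence explicitly for $R=k[[\pi,t]]$ and using Proposition \ref{localest} to transport the hypothesis $\textup{sw}_{\mathcal{X}}(\omega)<p$ down to the complete local ring, where the class collapses to a single symbol $[*,\pi)$. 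The lcm of quantities each dividing $p$ is then at most $p$, and no global splitting field is ever needed.

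The gap in your argument is precisely the part you flag as the ``technical heart'' but do not carry out. First, spreading the Artin--Schreier parameter $\alpha_y\in K^h_y$ out to $\alpha\in K$ while guaranteeing that the resulting extension $L=K[z]/(z^p-z-\alpha)$ introduces no new ramification of $\omega_L$ at other height-one primes of $\mathcal{X}$ is not a formal consequence of the shape of Kato's graded pieces; the normal forms of Theorem \ref{henselkato} have coefficients in $\mathcal{O}_{K^h_y}$, and approximating them by elements of $K$ changes the class by terms whose behavior at closed points of $Y$ must be controlled --- this is exactly the content the paper supplies via the local computation at $P$. Second, your concluding step does not go through as stated: the normalization $\mathcal{X}_L$ of $\mathcal{X}$ in a wildly ramified degree-$p$ Artin--Schreier extension is in general not regular, so purity (Theorem \ref{purity}) cannot be applied to it, ``unramified on $\mathcal{X}_L$'' is not yet meaningful, and after resolving one must check vanishing of residues along all exceptional divisors. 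Moreover, the assertion that an unramified class on a proper regular model with Tsen-trivial special fiber vanishes is itself nontrivial for $p$-torsion in characteristic $p$ (proper base change is not available for these coefficients). Each of these points requires substantial additional argument, and avoiding them is the reason the paper routes the proof through patching rather than through a global splitting field.
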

The function field $k(X)$ of $X$ is  finitely generated and of transcendence degree $1$ over the $C_1$ field $k((t))$. Hence it is a $C_2$ field and we expect that $\text{Br.dim}_p\ k(X)=1$. Our second result can be viewed as progress towards the computation of Brauer dimension of $k(X)$ at $p$.

To prove Theorem \ref{geoperind}, we use the patching method \cite{MR2545681} and follow the strategy as in \cite{MR3219517}. It reduces the global period-index problem to the period-index problem along the completion of the local ring at the generic point of the closed fiber and of local rings at closed points in the closed fiber (Theorem \ref{patching}). For the completion at the generic point, the period-index result is provided by Theorem \ref{firstthe}. For the completion at the closed points, we use a Gersten-type exact sequence (Theorem \ref{Gersten}) to analyse the ramification of a Brauer class locally. This Gersten-type exact sequence is the replacement of the Artin-Mumford ramification sequence \cite{MR321934}. Recall that the Arin-Mumford sequence provides information about ramification when the torsion of the Brauer classes is not divisible by the base characteristic. So, it does not apply in our case. We show that the $\mathcal{X}$-Swan conductor controls the local ramification behavior (Proposition \ref{localest}).

Finally, we remark that the condition on the $\mathcal{X}$-Swan conductor is used in avoiding the difficulty of the problem when the Swan conductor is greater than $p$. Our work is still in progress for the general period-index problem of the function field $k(X)$.
  \subsection*{Outline}
  The paper is organized as follows. In Section 2, we introduce some notation and recall some basic definitions. In Section 3, we review Brauer groups and concentrate on the $p$-torsion in Brauer groups in characteristic $p$. For the $p$-torsion in Brauer groups, it is closely related to the logarithmic differential forms. We summarize the results in Section \ref{logintro}. In Section $4$, we introduce Kato's Swan conductor of a Brauer class and generalize Kato's proof to the henselian case. In Section $5$, we prove Theorem \ref{firstthe} and recover Chipchakov's result \cite{MR4453883} with Kato's Swan conductor. In Section $6$, we discuss the ramification of central division algebras over the field of Laurent series as applications. Finally, in Section 7, we define the $\mathcal{X}$-Swan conductor and prove Theorem \ref{geoperind}. We compute the Gersten-type exact sequence of logarithmic differential forms for the complete local ring $\bar{k}[[\pi,t]]$ where $\text{char}(k)=p>0$.
  \subsection*{Acknowledgments}
I would like to express my great gratitude to my advisor, Professor Rajesh Kulkarni, who advised me with this project in Brauer groups. Moreover, I am really grateful for the discussion with Professor Ivan Chipchakov and Professor Takao Yamazaki. During the preparation of the project, I was partially supported by the NSF grant DMS-2101761.
\section{Notations}
Let $k$ be a field. Denote by $\bar{k}$ the algebraic closure of $k$.
Let $X$ be a scheme and $A$ be an \'etale sheaf over $X$.
Denote by $H^n(X,A)$ the $n$-th cohomology group of the small \'etale site of $X$ with values in $A$.
When $X=\text{Spec}\ k$, we also use $H^n(k,A)$ for $H^n(X,A)$.

Let $R$ be a ring and $\mathfrak{p}$ be a prime ideal of $R$. We denote by $R^h_\mathfrak{p}$ the henselization of the localization $R_p$ of $R$ at the prime ideal $\mathfrak{p}$.
\subsection{de Rham-Witt Complex}
\label{rederham}
Let $X$ be a regular scheme of dimension $d$ over a perfect field of characteristic $p>0$, and let $W_n(k)$ be the ring of Witt vectors of length $n$. Recall that the de Rham-Witt complex $W\Omega^\bullet_{X/k}$ is the inverse limit of an inverse system $(W_n\Omega^\bullet_{X/k})_{n\geq 1}$ of complexes where
\[
W_n\Omega^\bullet_{X/k}:=(\xymatrix{
W_n\Omega^0_{X/k}\ar[r]^-d& W_n\Omega^1_{X/k}\ar[r]& \cdots\ar[r]^-d&W_n\Omega^i_{X/k}\ar[r]^-d&\cdots
}
)
\]
of sheaves of $W_n\mathcal{O}_X$-modules on the Zariski site of $X$. Moreover, it is already a complex of sheaves on the small \'etale site of $X$ (\cite{MR565469}, also Proposition \ref{etalesheafomega}). The complex $W_n\Omega^\bullet_{X/k}$ is called the de Rham-Witt complex of level $n$.

We have the following operators on the de Rham-Witt complex:
\begin{enumerate}
    \item The projection $R:W_n\Omega^\bullet_X\rightarrow W_{n-1}\Omega^\bullet_X$, which is a surjective homomorphism of differential graded algebras.
\item The Verschieburg $V:W_n\Omega^\bullet_X\rightarrow W_{n+1}\Omega^\bullet_X$, which is an additive homomorphism.
\item The Frobenius $F:W_n\Omega^\bullet_X\rightarrow W_{n-1}\Omega^\bullet_X$, which is a homomorphism of differential graded algebras.
\end{enumerate}

The logarithmic de Rham-Witt sheaf $W_n\Omega^i_{X,\textup{log}}$ \cite{MR2396000} is defined by
\[
W_n\Omega^i_{X,\textup{log}}:=\textup{Im}(s:(\mathcal{O}^\times_X)^{\otimes i}\rightarrow W_n\Omega^i_X),
\]
where $s$ is defined by
\[
s(x_1\otimes\cdots\otimes x_i):=\dlog{x_1}\wedge\cdots\dlog{x_i}.
\]
(Here $\textup{Im}$ is considered in the category of sheaves on the small \'etale site of $X$.)
\subsection{\texorpdfstring{$\bm{C_m}$ fields}{Lg}}
We recall the definition of $C_m$ fields as these are important for considerations in this article. For any positive integer $m$, we say a field $k$ satisfies condition $C_m$ if every homogeneous polynomial $f\in k[x_1,\cdots,x_n]$ of degree $d$ with $d^m<n$ has a nontrivial zero in $k^n$ \cite{MR0046388}.

Here are some properties of $C_m$ fields:
\begin{enumerate}
    \item If a field is $C_m$, then any finite extension is also $C_m$.
    \item If a field is $C_m$, then any extension of transcendence degree $n$ is $C_{m+n}$.
    \item If a field $k$ is $C_m$, then $k((t))$ the field of Laurent series is $C_{m+1}$.
    \item The Brauer group of a $C_1$ field is $0$ \cite{MR554237}.
\end{enumerate}

Examples of $C_m$ fields:
\begin{description}
\item[\bm{$C_0$} fields] These are precisely the algebraically closed fields.
\item[\bm{$C_1$} fields (quasi-algebraically closed fields)] 
$ $
\begin{itemize}
    \item Finite fields
    \item The maximal unramified extension of a complete discretely valued field with a perfect residue field
    \item Complete discretely valued fields with algebraically closed residue fields.
\end{itemize} 
\item[\bm{$C_m$} fields] If $V$ is a variety of dimension $m$ over an algebraically closed field $k$, then the function field $k(V)$ is $C_{m}$.
\end{description}
\subsection{\'Etale Motivic Cohomology}
\label{etalemotivic}
Let $k$ be a perfect field of characteristic $p>0$ and $X$ be a smooth scheme over $k$. Then Voevodsky's object $\mathbb{Z}/p^r(j)$ in the derived category of Zariski (or \'etale) sheaves on $X$ is isomorphic to $W_r\Omega^j_{\textup{log}}[-j]$ \cite{MR1738056}*{Proposition 3.1, Theorem 8.3}. By \'etale motivic cohomology, we mean the \'etale cohomology of $X$ with coefficients in $\mathbb{Z}/p^r(j)$. It can be written in terms of differential forms as:
\[
H^i(X,\mathbb{Z}/p^r(j))\cong H^{i-j}(X,W_r\Omega^j_{X,\textup{log}}).
\]
This also applies to the case when $X = \textup{Spec}(k)$  for a field $k$ of characteristic $p$, not necessarily perfect. Since the \'etale $p$-cohomological dimension of the field $k$ is at most $1$ \cite{MR3727161}*{Proposition 6.1.9}, $H^i(k,\mathbb{Z}/p^r(j))$ is zero except when $i$ is $j$ or $j+1$. When $i=j$, Bloch and Kato identified this group with the Milnor $K$-group $K^M_j(k)/p^r$, or also with the group $W_r\Omega^j_{\textup{log},k}$ \cite{MR0849653}*{Corollary 2.8}.
\section{Backgrounds on Brauer groups}
\subsection{Basic properties of Brauer groups}
A central simple algebra (CSA) over a field $K$ is a finite-dimensional associative $K$-algebra $A$ that is simple with center $K$.
  
Two central simple algebras $A, A'$ are called {\it Morita equivalent} if there exist integers $r, s\in\mathbb{N}$ such that $A\otimes M_r(K)\simeq A'\otimes M_s(K)$ as $K$-algebras. By the Artin-Wedderburn theorem, a finite-dimensional simple algebra $A$ is isomorphic to the matrix algebra $M_n(D)$ for a $K$-central division algebra $D$. Moreover, such a division algebra is uniquely determined by a central simple algebra.

The {\it Brauer group} of a field $K$ is a torsion abelian group whose element are Morita equivalence classes of central simple algebras over $K$. The addition in the Brauer group is given by the tensor product of algebras.

As mentioned above, there is a unique division algebra in each Brauer  class. The {\it degree} $\text{deg}(A)$ of a central simple algebra $A$ is the integer $n$ such that $\text{dim}_K(A)=n^2$. Then we define the {\it index} $\text{ind}(A)$ of a central simple algebra $A$ to be the degree of the division algebra $D$ associated to $A$ by the Artin-Wedderburn theorem. In particular, note that the index is well-defined for a Brauer class. Also, for a Brauer class $[A]$ associated to a central simple algebra $A$, the {\it period} $\text{per}(A)$ is its order in the Brauer group $\text{Br}(K)$. 

It is well-known that the period divides the index of a central simple algebra, and these two integers have the same prime factors. So the index divides a power of the period. The period-index problem asks if one can bound the index in terms of the power of the period. Here are the relevant definitions from the introduction.

\begin{definition}[Brauer dimension, \cite{MR3413868}]
\label{defofbrdim}
\leavevmode \vspace{-\baselineskip}
\begin{itemize}
\item
  Let $K$ be a field. For a prime $p$, {\it the Brauer dimension at $p$}, $\textup{Br.~dim}_p(K)$, is the smallest integer $d$ such that for any $A\in \text{Br}_{p^n}(L)$, $\text{ind}(A)|\ \text{per}(A)^d$, and $\infty$ if no such number exists. 
 \item {\it The Brauer dimension of $K$} is 
\[\textup{Br.dim}(K)=\sup_p\left\{\textup{Br.dim}_p(K)\right\}.
\]
\end{itemize}
\end{definition} 
The period-index problem asks if $\text{Br.dim}(K)$ is finite, and the local period-index problem asks if $\text{Br.dim}_p(K)$ is finite for an arbitrary prime $p$. 

The Brauer group can also be defined in terms of Galois cohomology. We have 
\[\tx{Br}(K)=H^2(K,\mathbb{G}_m),\]
where $\mathbb{G}_m$ denotes the sheaf of units in the structure sheaf.

In general, the Brauer group of a scheme is defined in terms of Azumaya algebras. An Azumaya algebra is a generalization of central simple algebras to $R$-algebras where $R$ may not be a field.  For a scheme $X$, an Azumaya algebra on $X$ with structure sheaf $\mathcal{O}_X$ is a coherent sheaf $\mathcal{A}$ of $\mathcal{O}_X$-algebras that is \'etale locally isomorphic to the sheaf of matrices over the structure sheaf. The Brauer group $\tx{Br}(X)$ is an abelian group of equivalence classes of Azumaya algebras, with the addition given by the tensor product of algebras. Here two Azumaya algebras $\mathcal{A}, \mathcal{A'}$ are considered to be equivalent when $\textup{M}_r(\mathcal{A}) \cong \textup{M}_s(\mathcal{A'})$ as sheaves of $\mathcal{O}_X$-algebras for matrices of size $r \times r$ and $s \times s$ resp..

As in the case of a field, we define the cohomological Brauer group of a quasi-compact scheme $X$ to be the torsion subgroup of the \'etale cohomology group $H^2(X,\mathbb{G}_m)$. The cohomology group $H^2(X,\mathbb{G}_m)$ is torsion for a regular scheme $X$, but it may not be torsion in general. 

We recall several well-known facts about Brauer groups in the following.

\begin{theorem}
The Brauer group of a scheme $X$ is equal to the cohomological Brauer group for any scheme with an ample line bundle.
\end{theorem}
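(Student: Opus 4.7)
The plan is to prove bijectivity of the natural homomorphism
\[
\textup{Br}(X)\longrightarrow H^2(X,\mathbb{G}_m)_{\textup{tors}},
\]
which sends the class of an Azumaya algebra $\mathcal{A}$ of degree $n$ to the image of its $\textup{PGL}_n$-torsor (the sheaf of local trivializations of $\mathcal{A}$) under the boundary map $\delta\colon H^1(X,\textup{PGL}_n)\to H^2(X,\mathbb{G}_m)$ arising from the short exact sequence $1\to\mathbb{G}_m\to\textup{GL}_n\to\textup{PGL}_n\to 1$. Verifying that this is a well-defined group homomorphism, independent of the Morita representative chosen in each class, is formal.

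Injectivity does not require the ample hypothesis. If two Azumaya algebras $\mathcal{A}, \mathcal{A}'$ satisfy $\delta([\mathcal{A}])=\delta([\mathcal{A}'])$, then unwinding the non-abelian cohomology sequence forces the two $\textup{PGL}$-torsors to differ by the image of some $\textup{GL}$-torsor; geometrically this means that $\mathcal{A}\otimes\mathcal{E}nd(\mathcal{E}')\cong \mathcal{A}'\otimes\mathcal{E}nd(\mathcal{E})$ for locally free sheaves $\mathcal{E}, \mathcal{E}'$ of constant positive rank, so $[\mathcal{A}]=[\mathcal{A}']$ in $\textup{Br}(X)$.

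For surjectivity, which is the essential content of the theorem and where the ample line bundle enters, I would use the formalism of $\alpha$-twisted coherent sheaves in the sense of Lieblich (and earlier, Gabber). Given $\alpha\in H^2(X,\mathbb{G}_m)_{\textup{tors}}$ represented by a \v{C}ech $2$-cocycle on an étale cover $\{U_i\}$, an $\alpha$-twisted coherent sheaf is a coherent sheaf $\mathcal{F}_i$ on each $U_i$ together with gluing isomorphisms cocycle-compatible up to the $\alpha$-twist. The decisive reduction is that if there exists such an $\mathcal{F}$ that is locally free of constant positive rank, then $\mathcal{E}nd(\mathcal{F})$ descends to a genuine sheaf of $\mathcal{O}_X$-algebras which is Azumaya with cohomology class $\alpha$; hence surjectivity reduces to producing a nonzero $\alpha$-twisted \emph{locally free} coherent sheaf.

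The main obstacle is precisely this construction, which is where the ample line bundle $\mathcal{L}$ is used crucially. Any nonzero $\alpha$-twisted coherent sheaf $\mathcal{F}_0$ can be produced, for instance by pushforward from an étale cover on which $\alpha$ trivializes, but it will not generally be locally free. I would then follow the Gabber--de Jong strategy: tensor with a large power $\mathcal{L}^{\otimes m}$ to make $\mathcal{F}_0$ globally generated (Serre vanishing applies, since ampleness is an intrinsic property of $X$ and transfers to the twisted setting), and use the resulting generating sections, together with a projective-bundle / descent argument over the $\mathbb{G}_m$-gerbe associated to $\alpha$, to exchange $\mathcal{F}_0$ for a locally free $\alpha$-twisted sheaf of constant positive rank. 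Ensuring local freeness at \emph{every} point of $X$ is the technical heart of the argument and is the step in which the existence of an ample line bundle becomes essential; it is also the step one should expect to be the hardest, as this is exactly the content of the Gabber/de Jong theorem that lies beneath the clean statement above.
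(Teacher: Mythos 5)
The paper does not prove this statement at all: it is recalled as a known black box (Gabber's theorem, in the form given by de Jong's unpublished note), so there is no in-paper argument to compare yours against. Judged on its own, your write-up correctly identifies the standard strategy --- injectivity via the boundary map of $1\to\mathbb{G}_m\to\mathrm{GL}_n\to\mathrm{PGL}_n\to 1$ (which is indeed formal and needs no ampleness), and surjectivity via $\alpha$-twisted sheaves, with the whole content concentrated in producing a \emph{locally free} twisted sheaf of constant positive rank. But as a proof it has a genuine gap precisely there: you defer the decisive step to ``the Gabber/de Jong theorem that lies beneath the clean statement above,'' which is circular, since that theorem \emph{is} the statement being proved. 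A proof must actually carry out the modification of a coherent twisted sheaf into a locally free one, and this is where de Jong's argument does real work (elementary modifications along the non-locally-free locus, constructed from sections of untwisted sheaves of the form $\mathcal{H}om(\mathcal{F},\mathcal{F}\otimes\mathcal{L}^{\otimes m})$, together with a dimension/general-position argument).

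There is also a concrete technical misstep in the step you do describe: ``tensor with $\mathcal{L}^{\otimes m}$ to make $\mathcal{F}_0$ globally generated (Serre vanishing applies)'' does not typecheck for twisted sheaves. An $\alpha$-twisted sheaf with $\alpha$ of nontrivial Brauer class has no nonzero global sections in the naive sense --- it is not a quasi-coherent sheaf on $X$, and $H^0$ of the associated gerbe in the relevant weight is controlled by the twist, not by Serre vanishing on $X$. The correct move is to apply ampleness to \emph{untwisted} sheaves built from $\mathcal{F}_0$, such as $\mathcal{E}nd(\mathcal{F}_0)$ or $\mathcal{H}om(\mathcal{F}_0,\mathcal{F}_0\otimes\mathcal{L}^{\otimes m})$, and to use their sections to perform the modifications. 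So the outline is the right roadmap, but the argument as written both begs the question at the key step and misapplies global generation to an object that does not admit it.
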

For example, when $X$ is quasi-projective over a field $k$, we have the coincidence of two Brauer groups.

\begin{theorem}[Purity in codimension $1$, \cite{MR3959863}]
\label{purity}
For a Noetherian, integral, regular scheme $X$ with function field $K$,
        \[
       H^2(X,\mathbb{G}_m)=\bigcap_{x\in X^1} H^2(\mathcal{O}_{X,x},\mathbb{G}_m) \ \text{in} \ H^2(K, \mathbb{G}_m).\]
\end{theorem}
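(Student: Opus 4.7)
The containment $\subseteq$ I would dispose of first, as it is essentially formal: for each codimension-one point $x$, the localization $\text{Spec}(\mathcal{O}_{X,x}) \to X$ is flat, so pullback induces a restriction $H^2(X,\mathbb{G}_m) \to H^2(\mathcal{O}_{X,x}, \mathbb{G}_m)$ that is compatible with the further restriction to the generic point $\text{Spec}(K)$. Hence every class of $H^2(X,\mathbb{G}_m)$, viewed inside $H^2(K,\mathbb{G}_m)$, lies in every $H^2(\mathcal{O}_{X,x}, \mathbb{G}_m)$ at once, and therefore in their intersection.

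For the reverse containment $\supseteq$, the plan is to interpret the intersection as the ``everywhere unramified'' part of $H^2(K,\mathbb{G}_m)$ and then exhibit this unramified part as $H^2(X,\mathbb{G}_m)$ via a Gersten-type residue sequence
\[
0 \to H^2(X,\mathbb{G}_m) \to H^2(K,\mathbb{G}_m) \xrightarrow{\bigoplus_x \partial_x} \bigoplus_{x \in X^1} H^3_x(X,\mathbb{G}_m).
\]
Injectivity of the first map is the standard fact that a cohomology class on a regular integral scheme is determined by its restriction to the generic point. The key ingredient for exactness in the middle is a local residue sequence at each codim-one $x$:
\[
0 \to H^2(\mathcal{O}_{X,x}, \mathbb{G}_m) \to H^2(K,\mathbb{G}_m) \xrightarrow{\partial_x} H^3_x(X,\mathbb{G}_m),
\]
which says that $\alpha \in H^2(K,\mathbb{G}_m)$ extends to $H^2(\mathcal{O}_{X,x}, \mathbb{G}_m)$ precisely when $\partial_x \alpha = 0$. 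Intersecting this equivalence over all $x \in X^1$ identifies the right-hand side of the theorem with $\ker(\bigoplus_x \partial_x)$, which by the global exact sequence above is exactly $H^2(X,\mathbb{G}_m)$.

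The main obstacle, and the step I would ultimately delegate to the cited reference, is establishing the Gersten/residue sequence in the stated generality (regular Noetherian integral $X$, no hypothesis on characteristic). For the torsion prime to the residue characteristics this is Gabber's absolute cohomological purity for $\mathbb{G}_m$. The delicate case for this paper is the $p$-primary torsion when $p = \text{char}\,k(x)$; here I would translate the problem using the identification $H^i(-, \mathbb{Z}/p^r(j)) \cong H^{i-j}(-, W_r\Omega^j_{\textup{log}})$ recalled in Section \ref{etalemotivic}, reducing purity for the $p^r$-torsion of $\mathbb{G}_m$ to purity for the logarithmic de Rham--Witt sheaves $W_r\Omega^1_{\textup{log}}$, where Gros--Suwa/Kato-type results apply. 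Amalgamating the two torsion regimes into a single uniform statement is precisely what \cite{MR3959863} accomplishes, so at this point in the paper I would invoke it as a black box.
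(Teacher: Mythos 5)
Your proposal is essentially the same approach as the paper, which offers no argument of its own: the theorem is recalled as a known result with a citation to \cite{MR3959863}, and your proof likewise reduces, after the formal containment and the standard d\'evissage through local residue sequences, to invoking that reference as a black box for the purity input. One small correction to your sketch: the translation of $p$-primary purity into purity for $W_r\Omega^1_{\textup{log}}$ via $H^i(-,\mathbb{Z}/p^r(j))\cong H^{i-j}(-,W_r\Omega^j_{\textup{log}})$ is only available for schemes over $\mathbb{F}_p$, so in the stated generality (arbitrary Noetherian regular integral $X$) the genuinely hard case settled in \cite{MR3959863} is the $p$-primary torsion in \emph{mixed} characteristic, handled there by perfectoid methods rather than Gros--Suwa/Kato-type results---but since you delegate precisely this amalgamation step to the reference, the proof as structured is sound.
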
 

\subsection{Structure of \texorpdfstring{$p$}{Lg}-primary part of Brauer groups}
\label{logintro}
In this subsection, we assume all the fields have positive characteristic $p>0$.
We focus on the $p$-primary part of the Brauer groups. First, we recall the $p$-primary counterpart of the Merkurjev-Suslin theorem \cite{MR675529}. The Merkurjev-Suslin theorem states that $\text{Br}_n(K)$ is generated by cyclic algebras of degree $n$ when $K$ contains a primitive $n$-root of unity $\mu_n$.

Firstly, we recall the Artin-Schreier-Witt theory of cyclic field extensions in positive characteristic:

\begin{theorem}[\cite{MR554237}]
    Let $k$ be a field of characteristic $p>0$. Denote by $\mathcal{P}:W_r(k)\rightarrow W_r(k)$ the endmorphism of the length-$r$ Witt ring that maps $(x_1,\cdots,x_r)\in W_r(k)$ to $(x_1^p,\cdots,x_r^p)-(x_1,\cdots,x_r)$. Then there exists a canonical isomorphism
    \[
W_r(k)/\mathcal{P}(W_r(k))\cong H^1(k,\mathbb{Z}/p^r).
    \]
\end{theorem}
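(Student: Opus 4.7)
The plan is to realize $\mathbb{Z}/p^r$ as the kernel of an explicit morphism of étale sheaves on $\textup{Spec}\, k$ and then read off the stated isomorphism from the resulting long exact sequence in Galois cohomology.

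First I would upgrade $\mathcal{P}$ to a morphism $\mathcal{P} : W_r \to W_r$ of étale sheaves of abelian groups over $\textup{Spec}\, k$ (where $W_r$ is the sheaf sending a $k$-algebra $A$ to the length-$r$ Witt ring $W_r(A)$), and establish the Artin--Schreier--Witt short exact sequence
\[
0 \longrightarrow \mathbb{Z}/p^r \longrightarrow W_r \xrightarrow{\ \mathcal{P}\ } W_r \longrightarrow 0.
\]
On stalks at a separable closure $k^{\textup{sep}}$ this amounts to two assertions: the kernel of $\mathcal{P}$ on $W_r(k^{\textup{sep}})$ equals the Frobenius-fixed subring $W_r(\mathbb{F}_p) = \mathbb{Z}/p^r$, and $\mathcal{P}$ is surjective on $W_r(k^{\textup{sep}})$. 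The kernel computation follows from $\mathcal{P} = F - \textup{id}$ together with $\textup{Fix}(F) = W_r(\mathbb{F}_p)$. Surjectivity I would prove by induction on $r$ using the short exact sequence $0 \to W_{r-1} \xrightarrow{V} W_r \to \mathbb{G}_a \to 0$: the base case $r=1$ is the classical Artin--Schreier assertion that $x^p - x = a$ has a root in $k^{\textup{sep}}$ for every $a$, and the inductive step lifts a solution of the $\mathbb{G}_a$-component and then solves the residual $W_{r-1}$-equation. Equivalently one can argue that $\mathcal{P}$ is étale, since its differential on $W_r$ is $-\textup{id}$, hence étale-locally surjective, hence surjective as a sheaf.

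Next, I would take the long exact cohomology sequence for the étale site of $\textup{Spec}\, k$:
\[
W_r(k) \xrightarrow{\ \mathcal{P}\ } W_r(k) \longrightarrow H^1(k, \mathbb{Z}/p^r) \longrightarrow H^1(k, W_r) \xrightarrow{\ \mathcal{P}\ } H^1(k, W_r).
\]
To extract the cokernel of $\mathcal{P}$ on global sections as $H^1(k, \mathbb{Z}/p^r)$, it suffices to show additive Hilbert 90 for Witt vectors, namely $H^1(k, W_r) = 0$. This I would prove by induction on $r$ from the short exact sequence $0 \to W_{r-1} \xrightarrow{V} W_r \to \mathbb{G}_a \to 0$, together with the classical vanishing $H^1(k, \mathbb{G}_a) = 0$ (which holds because $\mathbb{G}_a$ is a quasi-coherent sheaf on the affine scheme $\textup{Spec}\, k$, so its étale cohomology coincides with its Zariski cohomology and vanishes in positive degrees).

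Combining these, the connecting homomorphism yields a canonical isomorphism
\[
W_r(k)/\mathcal{P}(W_r(k)) \xrightarrow{\ \sim\ } H^1(k, \mathbb{Z}/p^r),
\]
as desired. The main technical obstacle is the additive Hilbert 90 statement $H^1(k, W_r) = 0$; once the $r=1$ case is handled, the inductive step is formal, but it is essential to have already set up $W_r$ as an étale sheaf with the Verschiebung fitting into the displayed short exact sequence. Everything else is bookkeeping with the long exact sequence.
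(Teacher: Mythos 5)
The paper does not prove this statement; it is quoted as a known result from Serre's \emph{Local Fields}, so there is no in-paper argument to compare against. Your proof is the standard Artin--Schreier--Witt argument and is correct: the exactness of $0 \to \mathbb{Z}/p^r \to W_r \xrightarrow{\mathcal{P}} W_r \to 0$ on the \'etale site (kernel computed via $\mathrm{Fix}(F) = W_r(\mathbb{F}_p)$, surjectivity by induction along $0 \to W_{r-1} \xrightarrow{V} W_r \to \mathbb{G}_a \to 0$ or by noting $\mathcal{P}$ is \'etale), together with the vanishing $H^1(k, W_r) = 0$ deduced inductively from $H^1(k,\mathbb{G}_a)=0$, gives exactly the claimed isomorphism via the connecting map, which is the same route taken in the cited reference.
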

Then we have the following theorem about the $p^r$-cyclic algebras (symbol algebras).
\begin{prop}
For every $\omega\in \tx{Br}(K)[p^r]$, we can write
\[\omega=\sum_i [a_i,b_i),
\]
as a sum of $p^r$-symbol algebras where $a_i\in W_r(K)$ and $b_i \in K^\times$. The $p^r$-symbol algebra $[a_i, b_i)$ is defined by
\[ [a_i,b_i):=\left\langle x,y \left\vert
\begin{aligned}
&{\textup{ $x$ is a primitive element of the Artin-Schreier-Witt extension defined by}}\\
&{\textup{ 
$\mathcal{P}(x_1,\dots,x_r)=a_i$ and with a generator $\sigma$ of the Galois group such that,
}
}\\
& \begin{array}{lr}
y^{p^n} =  b_i, & y^{-1}x y =  \sigma(x).
\end{array}
\end{aligned}
\right.
\right\rangle.
\]
\end{prop}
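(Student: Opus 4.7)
The plan is to combine the classical theorem of Teichm\"uller---stating that in characteristic $p$ the $p$-primary torsion of $\textup{Br}(K)$ is generated by Brauer classes of cyclic algebras of $p$-power degree---with the Artin--Schreier--Witt correspondence just recalled in the previous theorem, to realise each cyclic summand as a symbol $[a_i,b_i)$.

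Given $\omega \in \textup{Br}(K)[p^r]$, I would first invoke Teichm\"uller's theorem (see e.g.\ Gille--Szamuely, \textit{Central Simple Algebras and Galois Cohomology}, Ch.\ 9) to produce a decomposition
\[
\omega = \sum_i [(L_i/K, \sigma_i, c_i)],
\]
in which each $(L_i/K,\sigma_i,c_i)$ is a cyclic algebra with $L_i/K$ cyclic of degree $p^{r_i}$, $r_i\le r$, $\sigma_i$ a chosen generator of $\textup{Gal}(L_i/K)$, and $c_i\in K^\times$.

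Next, I would apply the Artin--Schreier--Witt isomorphism $W_{r_i}(K)/\mathcal{P} W_{r_i}(K)\cong H^1(K,\mathbb{Z}/p^{r_i})$ to realise $L_i/K$ as the extension generated by a solution of $\mathcal{P}(x_1,\dots,x_{r_i})=a_i'$ for some class $a_i'\in W_{r_i}(K)/\mathcal{P} W_{r_i}(K)$. To bring this into the length-$r$ framework of the statement, I would choose a cyclic $p^r$-extension $L_i'/K$ containing $L_i$ corresponding under Artin--Schreier--Witt to a class $a_i\in W_r(K)/\mathcal{P} W_r(K)$ lifting $a_i'$, and then exploit the functoriality of the cup product under the inclusion $\mathbb{Z}/p^{r_i}\hookrightarrow \mathbb{Z}/p^r$ (given by multiplication by $p^{r-r_i}$) to identify
\[
[(L_i/K,\sigma_i,c_i)] = [(L_i'/K,\sigma_i',c_i^{p^{r-r_i}})]
\]
in $\textup{Br}(K)[p^r]$, for a suitable lift $\sigma_i'$ of $\sigma_i$. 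A direct comparison of generators and relations shows that the right-hand side is precisely the symbol algebra $[a_i,b_i)$ with $b_i:=c_i^{p^{r-r_i}}$, as defined in the statement. Summing over $i$ yields $\omega=\sum_i [a_i,b_i)$.

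The only substantive ingredient here is Teichm\"uller's theorem; the rest amounts to identifying the cyclic algebra supplied by Artin--Schreier--Witt theory with the symbol algebra $[a_i,b_i)$ as presented. The main potential pitfall is the careful bookkeeping between Witt-vector length $r_i$ and $r$ via Verschiebung/truncation, which is absorbed into the cup-product functoriality above. A more cohomological alternative would be to use the \'etale-motivic identification $\textup{Br}(K)[p^r]\cong H^2(K,\mathbb{Z}/p^r(1))\cong H^1(K,W_r\Omega^1_{K,\textup{log}})$ from Section~\ref{etalemotivic} together with a Kato-style surjectivity result for the cup product $W_r(K)/\mathcal{P} W_r(K)\otimes K^\times\to H^2(K,\mathbb{Z}/p^r(1))$, but this rests on essentially the same underlying fact.
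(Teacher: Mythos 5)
Your argument is essentially sound, but it is worth noting that the paper does not actually prove this proposition: it is recalled as a known result (the characteristic-$p$ counterpart of Merkurjev--Suslin, going back to Teichm\"{u}ller and Albert), and its effective justification in the paper comes only in the material that follows, via the de Rham--Witt description. Concretely, the paper identifies $\textup{Br}(K)[p^r]$ with $W_r\Omega^1_K/\bigl((F-I)W_r\Omega^1_K+dW_r(K)\bigr)$, invokes the structure lemma of Aravire--Jacob--O'Ryan that $W_r\Omega^1_K$ is generated by the Verschiebung translates $V^iM^1_{r-i}K$ of multiplicative forms together with exact forms, and then reads off the symbol decomposition through the map $\delta_r: a\ \dlog{[b]_r}\mapsto [a,b)$, as recorded in (\ref{br}). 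Your route --- Teichm\"{u}ller's generation theorem followed by the Artin--Schreier--Witt identification of cyclic $p$-power algebras with symbols --- is the classical alternative and is essentially the argument in Gille--Szamuely; it produces honest cyclic algebras without touching the de Rham--Witt complex, at the cost of the Witt-vector bookkeeping you describe.

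One step needs more care. You assert that the cyclic summands supplied by Teichm\"{u}ller's theorem have degree $p^{r_i}$ with $r_i\le r$. The crude form of that theorem only says that the $p$-primary torsion of $\textup{Br}(K)$ is generated by cyclic algebras of unspecified $p$-power degree, and a $p^r$-torsion class could a priori be a sum of cyclic classes of period larger than $p^r$; in that situation your lifting procedure, which only raises the level, does not apply. The standard repair is an induction on $r$ using the exact sequence $0\to \textup{Br}(K)[p^m]\xrightarrow{V}\textup{Br}(K)[p^{m+1}]\xrightarrow{R^1}\textup{Br}(K)[p]\to 0$ of Theorem \ref{hightop}: the case $r=1$ is Teichm\"{u}ller's theorem for degree-$p$ cyclic algebras, and in the inductive step one lifts a symbol decomposition of $R^1\omega$ termwise to level $r$ and corrects by an element of the image of $V$, which is handled by the inductive hypothesis. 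With that amendment (or by citing the refined surjectivity statement in Gille--Szamuely directly), your proof is complete; the remaining identifications --- the existence of the cyclic $p^r$-overfield $L_i'$ by Witt's embedding theorem in characteristic $p$, and the equality $[(L_i/K,\sigma_i,c_i)]=[(L_i'/K,\sigma_i',c_i^{p^{r-r_i}})]$ by bilinearity of the cup product --- are correct as stated, and the second is the same mechanism as the paper's commutative triangle relating $\delta_{r-1}$, $\delta_r$ and $V$.
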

The $p^r$-symbol algebra has index = period =  $p^r$.

\begin{example}
Let $a\in K,~ b\in K^\times$ and consider the $p$-symbol algebra $[a,b)$. By definition,
\[ [a,b):=\langle x,y\mid x^p-x=a, y^{p}=b, y^{-1}xy=x+1\rangle.
\]
This symbol algebra is the main object of our study since we can reduce questions related to $p^r$-torsion Brauer classes to the $p$-symbol algebra by Theorem \ref{hightop}.
\end{example}

Next we relate the $p$-primary part of the Brauer group with the de Rham-Witt complex $W_r\Omega_K^1$ \cite{MR565469} (Section \ref{rederham}). We can identify $\text{Br}(K)[p^r]$ with the cokernel of 
\begin{equation}
    \label{311}
F-I:W_r\Omega_K^1\rightarrow W_r\Omega_K^1/dV^{r-1} (K).    
\end{equation}
\begin{lemma}[\cite{MR565469}]
Let $K$ be a field of characteristic $p>0$ and $r\in \mathbb{N}^+$.
    \begin{align}
        \text{Br}(K)[p^r]\cong W_r\Omega^1_K/\left((F-I)W_r\Omega^1_K+dW_r(K)\right).
    \end{align}
\end{lemma}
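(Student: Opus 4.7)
The plan is to realize the $p^r$-torsion of the Brauer group as the cokernel appearing in (\ref{311}) by combining two ingredients: the \'etale motivic cohomology identification recalled in Section \ref{etalemotivic}, together with Illusie's fundamental short exact sequence of \'etale sheaves relating $W_r\Omega^1_{\log}$, $W_r\Omega^1$, and the operator $F-I$.

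First, I would invoke the Kummer-type sequence in characteristic $p$: the short exact sequence of \'etale sheaves
\[0 \to \mathbb{G}_m \xrightarrow{p^r} \mathbb{G}_m \to \mathbb{G}_m/p^r\mathbb{G}_m \to 0,\]
together with Hilbert 90 ($H^1(K,\mathbb{G}_m)=0$) and the Bloch--Kato--Gabber dlog isomorphism $\mathbb{G}_m/p^r\mathbb{G}_m \xrightarrow{\sim} W_r\Omega^1_{\log}$, yields $\textup{Br}(K)[p^r]\cong H^1(K,W_r\Omega^1_{\log})$. Equivalently, this is the identification $\textup{Br}(K)[p^r]\cong H^2(K,\mathbb{Z}/p^r(1))\cong H^1(K,W_r\Omega^1_{\log})$ recorded in Section \ref{etalemotivic}.

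Next, Illusie's exact sequence of \'etale sheaves
\[0\to W_r\Omega^1_{\log}\to W_r\Omega^1 \xrightarrow{F-I} W_r\Omega^1/dV^{r-1}\mathcal{O}\to 0,\]
applied on $\textup{Spec}(K)$, produces a long exact sequence in \'etale cohomology. Assuming the vanishings $H^1(K,W_r\Omega^1)=0$ and $H^1(K,dV^{r-1}\mathcal{O})=0$, the long exact sequence collapses to an isomorphism between $H^1(K,W_r\Omega^1_{\log})$ and the cokernel of the induced map $F-I\colon W_r\Omega^1_K \to W_r\Omega^1_K/dV^{r-1}K$, giving
\[\textup{Br}(K)[p^r]\cong W_r\Omega^1_K/\bigl((F-I)W_r\Omega^1_K + dV^{r-1}K\bigr).\]
To reconcile this presentation with the statement (where $dW_r(K)$ replaces $dV^{r-1}K$), I would verify that modulo $(F-I)W_r\Omega^1_K$ the two subgroups $dV^{r-1}K$ and $dW_r(K)$ coincide; this is a formal manipulation using the standard de Rham--Witt relations among $F$, $V$, and $d$, in particular $FdV=d$, which allows one to iteratively push any $d\alpha$ with $\alpha\in W_r(K)$ into the lowest Verschiebung layer modulo the image of $F-I$.

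The main technical obstacle is the vanishing $H^1(K,W_r\Omega^1)=0$. Because $W_r\Omega^1$ is not quasi-coherent over $K$ in the naive sense (its sections over an \'etale $K$-algebra $L$ involve Witt vectors of $L$), this does not follow from general vanishing theorems. The standard approach proceeds by devissage through the canonical filtration of $W_r\Omega^1_{K^{\textup{sep}}}$ whose graded pieces are $K^{\textup{sep}}$-vector spaces built from K\"ahler differentials; these are Galois-cohomologically trivial by the normal basis theorem combined with the additive Hilbert 90 for $\mathbb{G}_a$. Induction on $r$, using the short exact sequence relating $W_{r-1}\Omega^1$ and $W_r\Omega^1$ via $V$, then completes the vanishing argument, after which the rest of the proof is formal.
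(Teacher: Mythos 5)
Your proposal is correct and follows essentially the same route as the paper: the two short exact sequences of \'etale sheaves (the $p^r$-Kummer-type sequence identifying $\textup{Br}(K)[p^r]$ with $H^1(K,W_r\Omega^1_{\log})$ via Hilbert 90, and Illusie's sequence for $F-I$), the vanishing of $H^1(K,W_r\Omega^1_K)$, and the de Rham--Witt relation $d=F^{r-1}dV^{r-1}$ to replace $dV^{r-1}K$ by $dW_r(K)$. The only divergence is cosmetic: where the paper invokes quasi-coherence of $W_r\Omega^1_K$ as a $W_r\mathcal{O}$-module to kill $H^1$, you supply a d\'evissage through the Verschiebung filtration, which proves the same vanishing.
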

\begin{proof}
$ $\\
In fact, there exists an exact sequence of \'etale sheaves over the affine scheme $X=\text{Spec}(K)$:
\[  
  \xymatrix{
  0\ar[r] & W_r\Omega_{X,\text{log}}^1  \ar[r] & W_r\Omega^1_X \ar[r]^-{F-I} & W_r\Omega^1_X/dV^{r-1}(\mathcal{O}_X) \ar[r] & 0
  },
  \]
which induces the cohomology sequence
  \[
  \xymatrix{
  W_r\Omega_K^1 \ar[r]^-{F-I} & W_r\Omega_K^1/dV^{r-1}K \ar[r]^-{\delta_r} & H^1(K,W_r\Omega_{K,\text{log}}^1)\ar[r] & 0 \ ,
  }
  \]
  since $H^1(K,W_r\Omega^1_K)=0$ by the quasi-coherence of $W_r\Omega^1_K$.  
  
  Also, there is another exact sequence of \'etale sheaves which relates the $p^r$-torsion part of the Brauer group with the logarithmic de Rham-Witt complex
  \[
  \xymatrix{
  0\ar[r]& \mathbb{G}_m\ar[r]^-{p^r} & \mathbb{G}_m\ar[r] & W_r\Omega_{\text{log}}^1\ar[r]& 0.
  }
  \]
  It induces the long exact cohomology sequence
  \[
  \xymatrix{
 0\ar[r]& H^1(K,W_r\Omega_{K,\text{log}}^1)\ar[r] & H^2(K,\mathbb{G}_m)\ar[r]^-{p^r} & H^2(K,\mathbb{G}_m),
  }
  \]
  where $H^1(K,\mathbb{G}_m)=0$ by Hilbert's Theorem 90.
  
  By using the relation $d=F^{r-1}dV^{r-1}$ (\cite{MR565469} 2.18), it is easy to see that $\delta_r$ induces an isomorphism between the cokernel of (\ref{311}) and $\textup{Br}(K)[p^r]$.
  \end{proof}
  Now we use the structure of $W_r\Omega^1_K$ to describe $p^r$-torsion part of the Brauer group of $K$.
We recall some facts about $W_r\Omega^1_K$ \cite{MR3818286}. We use the notation $[a]_r:=(a,0,\cdots,0)\in W_r(K)$.

\begin{definition}
$M^1_r K \subset W_r\Omega_K^1$ denotes the subgroup generated by the elements $[a]_rd[f]_r$ where $a\in K,f\in K^\times$.
\end{definition}

\begin{lemma}[\cite{MR3818286}*{Lemma 2.4}]
Let $M^1_r K\subset W_r\Omega^1_K$ denote the subgroup generated by {\it multiplicative} elements $[a]_rd[f]_r$. Then we have
\[W_r\Omega^1_K=\sum_{i=0}^{r-1}V^i M^1_{r-i} K +\sum_{i=0}^{r-1}d V^i K.\]
Moreover,
\[dW_r(K)=\sum_{i=0}^{r-1} d V^i K\subset W_r\Omega^1_K.\]
\end{lemma}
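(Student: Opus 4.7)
The plan is to derive both decompositions from the Teichmüller-Verschiebung presentation of Witt vectors combined with the basic identities of the de Rham-Witt complex.

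First I would tackle the moreover statement, since it feeds into the main proof. Every Witt vector $\alpha \in W_r(K)$ admits a (non-unique) presentation $\alpha = \sum_{i=0}^{r-1} V^i(a_i)$ with $a_i \in K$, obtained inductively from the component structure of $W_r(K)$. Applying the derivation $d$ yields $d\alpha = \sum_i dV^i(a_i) \in \sum_i dV^i K$, while the reverse inclusion $dV^i K \subseteq dW_r(K)$ is immediate.

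For the main decomposition, note that $W_r\Omega^1_K$ is generated as an abelian group by products $\alpha\, d\beta$. Substituting the Witt presentations of both $\alpha$ and $\beta$ and using Leibniz reduces the problem to terms of the form $V^i([a]_{r-i}) \cdot dV^j([b]_{r-j})$ with $0 \leq i, j \leq r-1$. Using $d(\alpha\beta) = (d\alpha)\beta + \alpha(d\beta)$, one may assume $i \leq j$ at the cost of adding an exact differential of a Witt vector, which by the moreover statement lies in $\sum_m dV^m K$. The key step then uses the projection formula $V^i(x)\cdot y = V^i(x \cdot F^i y)$ together with the iterate $F^i dV^j = dV^{j-i}$ (coming from the fundamental relation $FdV = d$) to obtain
\[
V^i([a]_{r-i}) \cdot dV^j([b]_{r-j}) = V^i\!\left([a]_{r-i} \cdot dV^{j-i}([b]_{r-j})\right).
\]
When $j = i$ the bracketed expression is precisely $[a]_{r-i}\, d[b]_{r-i} \in M^1_{r-i} K$, placing the term in $V^i M^1_{r-i} K$. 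When $j > i$, set $k = j - i$ and apply Leibniz inside $W_{r-i}\Omega^1_K$: $[a] \cdot dV^k[b] = d([a] \cdot V^k[b]) - V^k[b] \cdot d[a]$. The first summand is an exact differential of a Witt vector and therefore lies in $\sum_m dV^m K$ after applying $V^i$; the second, rewritten by projection as $V^k([b] \cdot F^k d[a])$, reduces via the Teichmüller-Frobenius compatibility $F[a] = [a]^p$ to a multiplicative form, placing it in $V^{i+k} M^1_{r-i-k} K$.

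The main obstacle is verifying that these reductions terminate and close up within the claimed subgroups, i.e., that the Frobenius action on $d[a]$ and its iterates really stay inside $M^1$. This is controlled by the uniform bound $0 \leq i, j \leq r-1$ on the filtration indices, repeated use of the moreover statement to absorb exact-differential contributions into $\sum_m dV^m K$, and the standard DRW identity $F\,d[a] = [a]^{p-1}d[a]$ (modulo the Verschiebung filtration), which preserves multiplicativity under iteration. This is the argument recorded as Lemma 2.4 of \cite{MR3818286}.
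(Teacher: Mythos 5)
The paper does not prove this lemma; it is quoted verbatim from Aravire--Jacob--O'Ryan \cite{MR3818286}*{Lemma 2.4}, so there is no in-paper proof to compare against. Your argument is essentially the standard one behind that result and is correct in substance: the decomposition $\alpha=\sum_i V^i([a_i]_{r-i})$ of a Witt vector gives the ``moreover'' statement by additivity of $d$; the surjection $\Omega^1_{W_r(K)}\twoheadrightarrow W_r\Omega^1_K$ (built into Illusie's construction, and worth stating explicitly as the justification that products $\alpha\,d\beta$ generate) reduces everything to terms $V^i[a]\cdot dV^j[b]$; and the projection formula $V^i(x)\cdot y=V^i(x\cdot F^iy)$ together with $F dV=d$ and $Fd[a]=[a]^{p-1}d[a]$ sorts each term into $V^jM^1_{r-j}K$ or $dW_r(K)$. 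Two small points to tighten: (i) the identity $Fd[a]=[a]^{p-1}d[a]$ is exact, not merely valid ``modulo the Verschiebung filtration,'' and iterating it gives $F^kd[a]=[a]^{p^k-1}d[a]$, so $[b]\cdot F^kd[a]=[ba^{p^k-1}]\,d[a]$ is literally a multiplicative generator --- no filtration argument is needed; (ii) the step ``$V^i$ of an exact form lies in $dW_r(K)$'' uses the relation $Vd=p\,dV$, which you should cite explicitly. With those made precise, the reduction is a single finite pass over the terms, so the worry about ``termination'' in your last paragraph is moot.
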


It follows that
\begin{align}
    \text{Br}_{p^r}(K)\simeq W_r\Omega^1_K\left/ \left((F-I)W_r\Omega^1_K+dW_r(K)\right)\simeq \sum_{i=0}^{r-1}\left[V^i M^1_{r-i} K\right]. \right. \label{br}
\end{align} 
Then we relate the differential forms with symbol algebras by the following map
\begin{align*}
    \delta_r:\ W_r\Omega^1_K/((F-I)W_r\Omega^1_K+dW_r(K))& \longrightarrow \text{Br}(K)[{p^r}]\\
    a\ \dlog{[b]_r} & \longmapsto [a,b),
\end{align*}
where $a\in W_r(K),\ b\in K^\times$, and $\dlog{[b]_r}=[b]_r^{-1}d[b]_r$.

We denote the composite map $W_r\Omega_K^1\rightarrow \text{Br}(K)[{p^r}]\rightarrow \text{Br}(K)$ by $\delta_r$ as well. We have a commutative diagram 
\[
\xymatrix{
& W_{r-1}\Omega_K^1  \ar[d]_{\delta_{r-1}}  \ar[r]^V &  W_r \Omega_K^1 \ar[ld]_{\delta_r}\\
& \text{Br}(K)
}
\]

Using the isomorphism (\ref{br}), it is easy to give a direct proof of the following theorem.
\begin{theorem}[\cite{MR689394}]
\label{hightop}
For a field $K$ of positive characteristic $p>0$ and $m\in \mathbb{N}$, we have an exact sequence:
\begin{align}
    \xymatrix{
0\ar[r] & \text{Br}(K)[{p^m}]\ar[r]^V & \text{Br}(K)[{p^{m+1}}]\ar[r]^{R^1} & \text{Br}(K)[p]\ar[r] & 0,
}
\end{align}
where $R^1:W_m \Omega^1_K\rightarrow \Omega^1_K$ sends $[a]_m\dlog{[b]_m}$ to $a\ \dlog{b}$.
\end{theorem}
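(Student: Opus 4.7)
The idea is to leverage the isomorphism $\text{Br}(K)[p^r]\cong W_r\Omega^1_K/\bigl((F-I)W_r\Omega^1_K+dW_r(K)\bigr)$ of (\ref{br}) together with the short exact sequence of de Rham--Witt groups
\[
0\longrightarrow W_m\Omega^1_K\xrightarrow{\,V\,}W_{m+1}\Omega^1_K\xrightarrow{\,R^m\,}\Omega^1_K\longrightarrow 0,
\]
where $R^m$ is the iterated restriction from length $m+1$ down to length one. I read the arrow labelled ``$R^1$'' in the theorem as the map induced by this $R^m$ (so that the stated source $W_m\Omega^1_K$ should really be $W_{m+1}\Omega^1_K$), since this is the only reading compatible with the target $\text{Br}(K)[p]$.

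\textbf{Well-definedness and free injectivity of $V$.} The commutative triangle $\delta_{m+1}\circ V=\delta_m$ recorded just before Theorem~\ref{hightop} tells us that the induced map $V$ on Brauer quotients is precisely the canonical inclusion $\text{Br}(K)[p^m]\hookrightarrow\text{Br}(K)[p^{m+1}]$ of torsion subgroups of $\text{Br}(K)$; in particular it is injective with no further work. For $R^m$, each restriction $R\colon W_r\Omega^1_K\to W_{r-1}\Omega^1_K$ is a morphism of differential graded algebras commuting with $F$ and $d$, so $R^m$ sends $(F-I)W_{m+1}\Omega^1_K+dW_{m+1}(K)$ into $(F-I)\Omega^1_K+dK$ and descends to a homomorphism on Brauer quotients.

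\textbf{Surjectivity and middle exactness.} Surjectivity is immediate: every class in $\text{Br}(K)[p]$ is a finite sum of Artin--Schreier symbols, represented by some $\sum_i a_i\,d\log b_i\in\Omega^1_K$, and its Teichm\"uller lift $\sum_i[a_i]_{m+1}d\log[b_i]_{m+1}\in W_{m+1}\Omega^1_K$ is sent to it by $R^m$. For the middle, $R^m\circ V=0$ already on $W_\bullet\Omega^1_K$ (the Verschiebung inserts a $0$ in the length-one component), so $R^1\circ V=0$. Conversely, suppose $[\omega]\in\text{Br}(K)[p^{m+1}]$ lies in $\ker R^1$, so that $R^m(\omega)=(F-I)\xi+du$ for some $\xi\in\Omega^1_K$ and $u\in K$. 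Pick any lifts $\tilde\xi\in W_{m+1}\Omega^1_K$ and $\tilde u\in W_{m+1}(K)$, and set $\omega':=\omega-(F-I)\tilde\xi-d\tilde u$. Then $R^m(\omega')=R^m(\omega)-(F-I)\xi-du=0$, hence $\omega'=V\eta$ for some $\eta\in W_m\Omega^1_K$ by the short exact sequence above. Therefore $[\omega]=[\omega']=V[\eta]\in\operatorname{Im} V$.

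\textbf{Main difficulty.} The only real subtlety is the clean identification of $V$ on Brauer quotients with the plain inclusion of torsion subgroups of $\text{Br}(K)$; once that is in hand via the triangle with the $\delta$'s, what remains is a short diagram chase powered by the exactness of $0\to W_m\Omega^1_K\to W_{m+1}\Omega^1_K\to\Omega^1_K\to 0$, a standard feature of the de Rham--Witt complex. One should also keep track of the fact that the $\tilde\xi,\tilde u$ lifts above may be chosen arbitrarily: any other choice changes $\omega'$ by an element of $(F-I)W_{m+1}\Omega^1_K+dW_{m+1}(K)$, which is killed in $\text{Br}(K)[p^{m+1}]$.
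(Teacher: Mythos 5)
Your overall strategy --- transporting an exact sequence of de Rham--Witt groups through the identification (\ref{br}) --- is exactly the route the paper gestures at (it offers no details beyond ``using the isomorphism (\ref{br}), it is easy to give a direct proof''), and your treatment of the injectivity of $V$ via the triangle $\delta_{m+1}\circ V=\delta_m$ and of the surjectivity of $R^1$ via Teichm\"uller lifts of symbols is fine. However, the sequence you take as your main input,
\[
0\longrightarrow W_m\Omega^1_K\xrightarrow{\;V\;}W_{m+1}\Omega^1_K\xrightarrow{\;R^m\;}\Omega^1_K\longrightarrow 0,
\]
is not exact in the middle, and the one step that leans on this exactness fails as written. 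The kernel of the restriction $R^m$ is the filtration step $\textup{Fil}^1W_{m+1}\Omega^1_K=VW_m\Omega^1_K+dVW_m(K)$, not $VW_m\Omega^1_K$ alone; the correct presentation is
\[
W_m\Omega^1_K\oplus W_m(K)\xrightarrow{\;(V,\,dV)\;}W_{m+1}\Omega^1_K\xrightarrow{\;R^m\;}\Omega^1_K\longrightarrow 0.
\]
This is visible already in the paper's own decomposition $W_r\Omega^1_K=\sum_i V^iM^1_{r-i}K+\sum_i dV^iK$: the summands $dV^iK$ with $i\geq 1$ are killed by $R^m$ but are not, in general, in the image of $V$. (Whether $V$ is injective at the level of forms is immaterial, since you correctly obtain injectivity of the induced map from the $\delta$-triangle rather than from the complex.)

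Consequently the inference ``$R^m(\omega')=0$, hence $\omega'=V\eta$ for some $\eta\in W_m\Omega^1_K$'' is unjustified. The repair is one line: $R^m(\omega')=0$ gives $\omega'=V\eta+dV\mu$ with $\eta\in W_m\Omega^1_K$ and $\mu\in W_m(K)$, and $dV\mu$ lies in $dW_{m+1}(K)$, which is part of the subgroup quotiented out in the identification $\textup{Br}(K)[p^{m+1}]\cong W_{m+1}\Omega^1_K/\bigl((F-I)W_{m+1}\Omega^1_K+dW_{m+1}(K)\bigr)$; hence $[\omega']=V[\eta]$ all the same. (A similar remark disposes of the ambiguity in ``$(F-I)\tilde\xi$'': the paper's $F-I$ only takes values modulo $dV^{m}(K)\subset dW_{m+1}(K)$, so the choice of lift does not affect the class.) With this correction your argument is complete and is essentially the direct verification the paper has in mind.
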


The Brauer dimension at $p$ for a field of characteristic $p>0$ is effectively controlled by the \textbf{rank} of the $p$-basis.

\begin{definition}[$p$-basis and $p$-rank]
    Let $K$ be a field and $[K:K^p]=p^n$, $n\geq 0$. A $p$-basis of $K$ is a subset $\{x_i\}\subset K$ such that the elements $x^E=\prod x_i^{e_i},0\leq e_i<p$ form a basis of $K$ over $K^p$, and the $p$-rank of $K$ is the number of elements in the subset $\{x_i\}$. Hence the $p$-rank of $K$ is $n$.
\end{definition}

\begin{prop}[\cite{MR4176776}*{Corollary 3.4}]
Let $K$ be a field with $[K:K^p]=p^n$. Then $\text{Br.dim}_p(K)\leq n$.
\label{prank}
\end{prop}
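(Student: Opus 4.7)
The plan is to prove, by induction on $s$, that for every $s\geq 1$ the map
\[
\phi_s: W_s(K)^n \longrightarrow \textup{Br}(K)[p^s], \qquad (a_1,\dots,a_n) \longmapsto \sum_{i=1}^n [a_i,x_i)
\]
is surjective, where $\{x_1,\dots,x_n\}$ is a fixed $p$-basis of $K$. Granted this, any $\omega\in\textup{Br}(K)$ of period $p^s$ lies in $\textup{Br}(K)[p^s]$ and admits a decomposition $\omega=\sum_{i=1}^n [a_i,x_i)$ with $a_i\in W_s(K)$; each $p^s$-symbol has index dividing $p^s$, and because the index of a sum of Brauer classes divides the product of the individual indices, one obtains $\textup{ind}(\omega)\mid p^{sn}=\textup{per}(\omega)^n$. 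This gives $\textup{Br.dim}_p(K)\leq n$.

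The base case $s=1$ rests on the fact that, for a $p$-basis $\{x_1,\dots,x_n\}$, the module $\Omega^1_K$ is free over $K$ on $dx_1,\dots,dx_n$, equivalently on $\dlog{x_1},\dots,\dlog{x_n}$. A representative $\eta\in\Omega^1_K$ of $\omega\in\textup{Br}(K)[p]$ can therefore be written as $\sum_i a_i\,\dlog{x_i}$, which yields $\omega=\sum_i [a_i,x_i)$ via the isomorphism recalled in Section~\ref{logintro}.

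For the inductive step I would compare the short exact sequence for Witt vectors with the one supplied by Theorem~\ref{hightop}, via the commutative diagram
\[
\xymatrix{
0 \ar[r] & W_{s-1}(K)^n \ar[r]^-{V} \ar[d]_{\phi_{s-1}} & W_s(K)^n \ar[r]^-{R^{s-1}} \ar[d]_{\phi_s} & K^n \ar[r] \ar[d]^{\phi_1} & 0 \\
0 \ar[r] & \textup{Br}(K)[p^{s-1}] \ar[r]^-{V} & \textup{Br}(K)[p^s] \ar[r]^-{R^1} & \textup{Br}(K)[p] \ar[r] & 0
}
\]
where $R^{s-1}$ is iterated projection onto the leading Witt coordinate. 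The right square commutes by the explicit description of $R^1$ in Theorem~\ref{hightop}. The left square commutes once one knows the identity $[V(a),x)_{p^s}=[a,x)_{p^{s-1}}$ inside $\textup{Br}(K)$; this follows from the commuting triangle $\delta_s\circ V=\delta_{s-1}$ of Section~\ref{logintro}, or, equivalently, from the Artin--Schreier--Witt description of the symbol algebra, since an extension whose leading Witt coordinate vanishes truncates to one at level $s-1$. A four-lemma chase then delivers surjectivity of $\phi_s$ from that of $\phi_{s-1}$ and $\phi_1$.

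The main obstacle is precisely the verification of the identity $[V(a),x)_{p^s}=[a,x)_{p^{s-1}}$ and, more broadly, the bookkeeping required to keep track of whether a symbol is being formed at level $s$ or at level $s-1$ before being embedded back into $\textup{Br}(K)[p^s]$. Once this Witt-theoretic compatibility is in place, the remainder of the argument is a diagram chase together with the elementary estimate $\textup{ind}(A+B)\mid \textup{ind}(A)\cdot\textup{ind}(B)$.
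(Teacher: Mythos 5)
Your proof is correct and follows essentially the same route as the paper: both arguments use the exact sequence of Theorem~\ref{hightop} together with induction to write every $p^s$-torsion class as a sum of $n$ symbols $[a_i,x_i)$ over a fixed $p$-basis, and then bound the index of the sum. The only cosmetic difference is that the paper invokes Albert's lemma on tensor products of symbol algebras where you use the general submultiplicativity of the index; both yield $\textup{ind}(\omega)\mid p^{sn}=\textup{per}(\omega)^n$.
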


\begin{proof}
For $r\in\mathbb{N}$ and a $p$-basis $\{a_i\}_{i=1}^n$ of $K$,  by Theorem \ref{hightop} and induction, every $p^r$-torsion Brauer class can be written as a sum of $n$ symbol algebras $[c_i,a_i)$, where $c_i \in W_r(K)$ for $i\in \{1,\cdots, n\}$. Then the proposition follows from the following lemma.
\end{proof}
\begin{lemma}[\cite{MR0000595}*{Ch. VII, Lemma 13}]
Let $K$ be a field of characteristic $p>0$.
If $A,B$ are two symbol algebras of degree $p^m$ and $p^n$ respectively, then $A\otimes B$ is Brauer equivalent to a symbol algebra of degree no more than $p^{m+n}$.
\end{lemma}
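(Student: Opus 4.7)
The plan is to leverage Albert's structural theorem that every central simple algebra of $p$-power degree over a field of characteristic $p$ is a cyclic $p$-algebra, together with the Artin--Schreier--Witt description of cyclic $p$-power extensions in characteristic $p$.

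First, I would use the elementary fact that the index of a tensor product of central simple algebras divides the product of their indices, which in our setting gives
\[
\text{ind}(A\otimes_K B)\ \mid\ \text{ind}(A)\cdot \text{ind}(B)=p^m\cdot p^n=p^{m+n}.
\]
Let $D$ denote the unique (up to isomorphism) division algebra Brauer-equivalent to $A\otimes_K B$. Then $\deg(D)=\text{ind}(A\otimes_K B)=p^s$ for some integer $s$ with $0\leq s\leq m+n$.

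Next, I would invoke Albert's cyclicity theorem: over a field of characteristic $p$, every central division algebra of $p$-power degree is cyclic (this is the structural result proved in the same chapter of \cite{MR0000595}). Combined with the Artin--Schreier--Witt description of cyclic $p^s$-extensions recalled earlier in the paper, this forces $D$ to admit a presentation as a symbol algebra $D\cong[e,f)_{p^s}$ for some $e\in W_s(K)$ and $f\in K^\times$. Consequently $A\otimes_K B$ is Brauer-equivalent to $[e,f)_{p^s}$, a symbol algebra of degree $p^s\leq p^{m+n}$, as desired.

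The main obstacle is the cyclicity theorem for $p$-algebras itself, which is the substantive content of Albert's chapter: its proof requires a careful analysis of maximal separable subfields of a $p$-algebra together with the cyclic extensions they contain, and is where all the nontrivial work resides. Granting that theorem, the remainder of the argument is essentially bookkeeping: tracking indices through the tensor product and identifying cyclic $p^s$-algebras with Artin--Schreier--Witt symbols via the Witt-vector formalism.
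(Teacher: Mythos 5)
The paper gives no proof of this lemma; it is cited verbatim from Albert (Ch.~VII, Lemma 13), so your proposal has to be measured against Albert's own argument. Your first step, $\textup{ind}(A\otimes_K B)\mid \textup{ind}(A)\cdot\textup{ind}(B)\mid p^{m+n}$, is fine, but the second step rests on a false premise. The ``cyclicity theorem'' you invoke --- that every central division algebra of $p$-power degree over a field of characteristic $p$ is itself a cyclic (symbol) algebra of that degree --- is not what Albert proves, and it is false in general: Amitsur and Saltman (\emph{Generic abelian crossed products and $p$-algebras}, J.\ Algebra 51 (1978)) construct non-cyclic division $p$-algebras of degree $p^n$ for $n\geq 2$. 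What Albert actually proves is that every $p$-algebra is Brauer equivalent to \emph{some} cyclic algebra, with no control on the degree of that cyclic representative in terms of the index; that weaker, true statement does not yield the bound $p^{m+n}$, which is the entire content of the lemma. There is also a circularity problem: in Albert's Chapter VII (and in the modern treatment in Gille--Szamuely, Section 9.1) the present lemma is an \emph{ingredient} in the proof of the similarity-to-cyclic theorem, so one cannot derive it from that theorem.

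The correct route goes through purely inseparable splitting fields rather than through the underlying division algebra. The symbol $[a,b)$ of degree $p^m$ is split by $K(b^{1/p^m})$, a purely inseparable extension of degree dividing $p^m$; hence $A\otimes_K B$ is split by the composite $K(b^{1/p^m},d^{1/p^n})$, a purely inseparable extension of degree $p^s\leq p^{m+n}$. One then applies Albert's theorem that a central simple algebra split by a purely inseparable extension of degree $q$ is Brauer equivalent to a cyclic algebra of degree $q$; this is the substantive input, and it is where the work lies. Note finally that the lemma only asserts Brauer equivalence to a symbol algebra of degree at most $p^{m+n}$ --- that representative need not be a division algebra --- so reducing to the underlying division algebra $D$ and demanding that $D$ itself be cyclic of degree $\textup{ind}(A\otimes_K B)$ asks for strictly more than is claimed, and more than is known.
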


\subsection{Brauer group of a complete discretely valued field}\label{cdf}
In this subsection, $F$ denotes a complete discretely valued field with valuation ring $\mathcal{O}_F$, residue field $k$ and maximal ideal $m_F=(\pi)$. The valuation of $F$ is denoted by $v_F$. Recall that a discrete valuation is a map $v_F:\ F\rightarrow \mathbb{Z}\cup \{\infty\}$ that satisfies:\
\begin{enumerate}
    \item[(\romannumeral 1)] $v_F(a)=\infty$ if and only if $a=0$;
    \item[(\romannumeral 2)] $v_F(ab)=v_F(a)+v_F(b)$;\
    \item[(\romannumeral 3)] $v_F(a+b)\geq \min(v_F(a),v_F(b))$, with equality if $v_F(a)\neq v_F(b)$.\
\end{enumerate}
The valuation ring $\mathcal{O}_F = v_F^{-1}(\mathbb{Z}_{\geq 0})$ is a complete local ring.  For a complete discretely valued field $F$, we can extend the complete valuation $v_F$ to central simple divison algebras over $F$ and consider the residue division algebras. They are summarized in the following proposition.
\begin{prop}[\cite{StableOrders}*{Proposition 1.3.1}]
Let $D$ be a central division $F$-algebra.\
\begin{enumerate}
\item[(\romannumeral 1)] The function $w:D\rightarrow \mathbb{Z}\cup \{\infty\}$ defined by $w(a)=v_F(\text{det}(a))$ is a discrete valuation on $D$.\

\item[(\romannumeral 2)] The set $B:=\{a\in D\mid w(a)\geq 0\}=\{a\mid \text{det}(a)\in \locr{F}\}$ is the unique maximal $\locr{F}$-order in $D$.\

\item[(\romannumeral 3)] $B$ is a local domain with maximal ideal $J:=\{a\mid w(a)>0\}$; the residue ring $\Delta=B/J$ is a division ring.

\item[(\romannumeral 4)] If $\pi$ is an element of $J$ such that $w(\pi)$ takes the minimal positive value, then $J=B\pi=\pi B$
\end{enumerate}
\end{prop}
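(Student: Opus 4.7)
My plan is to establish part (i) — that $w$ is a discrete valuation — and then derive parts (ii)--(iv) as formal consequences. The key observation underlying (i) is that for every nonzero $a\in D$, the subalgebra $F[a]\subset D$ is a commutative finite-dimensional integral domain over the field $F$, hence itself a finite field extension $L=F(a)$. Since $F$ is complete discretely valued, $v_F$ extends uniquely to a valuation $v_L$ on $L$, whose valuation ring equals the integral closure of $\mathcal{O}_F$ in $L$. The identity $\det(a)=N_{L/F}(a)^{[D:L]}$ (where $\det$ denotes the determinant of left multiplication by $a$ on $D$) shows that $w|_L$ is a fixed positive integer multiple of $v_L$; in particular $w(a)\geq 0$ if and only if $a$ is integral over $\mathcal{O}_F$. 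Multiplicativity $w(ab)=w(a)+w(b)$ is immediate from that of $\det$.

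The ultrametric inequality is the only subtle point, since the noncommutativity of $D$ blocks any direct computation inside a single subfield. Assuming $w(a)\geq w(b)$ and $b\neq 0$, I would reduce by multiplicativity via $c=ab^{-1}$: then $w(a+b)-w(b)=w(c+1)$, so it suffices to prove $w(c+1)\geq 0$ whenever $w(c)\geq 0$. But now both $c$ and $c+1$ live in the commutative subfield $L=F(c)=F(c+1)$; $w(c)\geq 0$ says $c$ is integral over $\mathcal{O}_F$, and since $1$ is trivially integral, so is $c+1$, giving $w(c+1)\geq 0$. Integer-valuedness is automatic because $\det(a)\in F$ forces $w(a)\in\mathbb{Z}\cup\{\infty\}$, and discreteness of the image follows since it is a subgroup of $\mathbb{Z}$.

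Parts (ii)--(iv) then follow formally. For (ii), the two valuation axioms imply $B=\{w\geq 0\}$ is a subring containing $\mathcal{O}_F$, and by the characterization just given, $B$ equals the integral closure of $\mathcal{O}_F$ in $D$; this is a finitely generated $\mathcal{O}_F$-module (since complete discrete valuation rings are Nagata), hence an order. Any other $\mathcal{O}_F$-order is finitely generated over $\mathcal{O}_F$, and the Cayley--Hamilton trick applied to left-multiplication endomorphisms of that order shows each of its elements is integral over $\mathcal{O}_F$, hence lies in $B$. For (iii), $J=\{w>0\}$ is a two-sided ideal by the valuation axioms, and $w(a)=0$ forces $w(a^{-1})=0$, so $B\setminus J$ consists of units; thus $B$ is local with maximal two-sided ideal $J$. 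The quotient $\Delta=B/J$ has no zero divisors (if $w(ab)>0$ then $w(a)>0$ or $w(b)>0$) and is a finite-dimensional algebra over the residue field $k$, so it is a division ring. For (iv), given $\pi\in J$ of minimal positive value, any $a\in J$ satisfies $w(a\pi^{-1})=w(a)-w(\pi)\geq 0$ and similarly $w(\pi^{-1}a)\geq 0$, so $a\in B\pi\cap\pi B$; the reverse inclusions hold since $\pi\in J$ and $J$ is a two-sided ideal.

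The main obstacle is the ultrametric step in (i): it is the only place where noncommutativity of $D$ is in tension with the ambient commutative theory of valuations, and the resolution requires a clean reduction to a single subfield via multiplicativity of $w$. Everything else — the ideal structure, the uniqueness of the order, the division-ring residue, the two-sided principal generation of $J$ — is formal bookkeeping from the valuation axioms, paralleling the commutative DVR case.
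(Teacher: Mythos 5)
The paper does not actually prove this proposition; it is quoted verbatim from Artin--de Jong's preprint \emph{Stable orders over surfaces} (Proposition 1.3.1) with a citation and no argument, so there is no in-paper proof to compare against. Your proof is essentially correct and follows the standard route: the reduction of the ultrametric inequality to $w(c+1)\geq 0$ for $w(c)\geq 0$, handled inside the commutative subfield $F(c)$ via uniqueness of the valuation extension over the complete (or henselian) base, is exactly the right way to defuse the noncommutativity, and parts (ii)--(iv) do follow formally once one knows $B$ is the set of elements integral over $\mathcal{O}_F$ and is a ring. The one step I would tighten is your appeal to the Nagata property to conclude that $B$ is a finite $\mathcal{O}_F$-module: that property concerns integral closures in finite \emph{field} extensions of the fraction field of a commutative ring, and $D$ is not such an extension, so it does not apply verbatim. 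The standard repair is to pick an $F$-basis $e_1,\dots,e_{n^2}$ of $D$ lying in $B$, note that the reduced trace form is nondegenerate on the central simple algebra $D$ and that $\mathrm{Trd}(x)\in\mathcal{O}_F$ for every $x\in B$ (its reduced characteristic polynomial has coefficients in the integrally closed ring $\mathcal{O}_F$), and conclude $B\subseteq\Lambda^{\vee}$ for the lattice $\Lambda=\sum\mathcal{O}_F e_i$, whence $B$ is finitely generated because $\mathcal{O}_F$ is noetherian. Relatedly, in (iii) you can avoid relying on finite-dimensionality of $\Delta$ over $k$ altogether: any nonzero class in $B/J$ lifts to an element with $w=0$, whose inverse already lies in $B$, so $\Delta$ is a division ring directly.
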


Next we study the unique maximal order $B$ in the above proposition. Let $k'$ be the center of $\Delta$. Then we have the integers $d,e,e',f,n$ defined as follows:
\begin{align}
    d=w(\pi),\ J^e=m_F B,\ e'=[k':k],\ f^2=[\Delta:k'],\ n^2=[D:F].\label{ram}
\end{align}
Here $n$ is the degree (index) of $D$, and also its degree.

\begin{lemma}[\cite{StableOrders}*{Lemma 1.3.7
}]
$ed=n,\ \text{and}\  ee' f^2=n^2.$ \label{ed}
\label{structureofrda}
\end{lemma}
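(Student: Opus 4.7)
The plan is to establish the two equalities separately: first $ed = n$, which follows from the normalization built into the extended valuation $w$, and then $ee'f^2 = n^2$, which I would obtain from a double count of $\dim_k(B/m_F B)$ using the $J$-adic filtration.

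For $ed = n$, recall from the preceding proposition that $w(a) = v_F(\det(a))$, where $\det$ is understood as the reduced norm on $D$. For a scalar $a \in F \subset D$ the reduced norm equals $a^n$, so $w|_F = n \cdot v_F$ and in particular $w(\pi_F) = n$ for a uniformizer $\pi_F$ of $F$. On the other hand, part (iv) of the proposition gives $J = \pi B = B\pi$ with $w(\pi) = d$, so the relation $J^e = m_F B = \pi_F B$ forces $\pi^e = \pi_F \cdot u$ for some unit $u \in B^\times$. Since $w$ vanishes on units, $ed = w(\pi^e) = w(\pi_F) = n$.

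For $ee'f^2 = n^2$, I compute $\dim_k(B/m_F B)$ in two ways. On the one hand, $B$ is a maximal $\mathcal{O}_F$-order in the division algebra $D$, hence a finitely generated torsion-free $\mathcal{O}_F$-module spanning $D$ over $F$. Over the DVR $\mathcal{O}_F$ such a module is free, and comparison of $F$-ranks gives $\textup{rank}_{\mathcal{O}_F} B = [D:F] = n^2$. Thus $\dim_k(B/m_F B) = n^2$. On the other hand, I would filter
\[
B \supset J \supset J^2 \supset \cdots \supset J^e = m_F B,
\]
and show that multiplication by $\pi^i$ on the right descends to a $k$-linear isomorphism $B/J \xrightarrow{\sim} J^i/J^{i+1}$ for each $0 \leq i < e$; this uses the two-sided generation $J = \pi B = B\pi$ together with the fact that $\pi$ is a non-zero-divisor in $D$. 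Since $B/J = \Delta$ and $[\Delta:k] = [\Delta:k'][k':k] = f^2 e'$, summing over the $e$ graded pieces yields $\dim_k(B/m_F B) = e \cdot f^2 e'$. Equating the two expressions gives $ee'f^2 = n^2$.

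The step that deserves the most care is the identification of each associated graded piece $J^i/J^{i+1}$ with $\Delta$ as a $k$-vector space: this depends crucially on the two-sidedness of the principal generator $\pi$ of $J$ (which is why right-multiplication by $\pi^i$ is well-defined on the left $B$-module structure). The freeness of $B$ over $\mathcal{O}_F$ is otherwise a standard consequence of the structure theorem for finitely generated modules over a DVR, and the remaining arithmetic is routine.
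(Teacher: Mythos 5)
The paper offers no proof of this lemma---it is quoted directly from Artin and de Jong---so there is nothing to compare against, but your argument is correct and is the standard one: the reduced-norm normalization $w|_F = n\,v_F$ gives $ed=n$, and the two computations of $\dim_k(B/m_F B)$ (as $n^2$ from freeness of the order over the DVR $\mathcal{O}_F$, and as $e\cdot e'f^2$ from the $J$-adic filtration with graded pieces isomorphic to $\Delta$) give $ee'f^2=n^2$. The supporting steps you flag are sound: $J=\pi B=B\pi$ makes right multiplication by $\pi^i$ a well-defined $\mathcal{O}_F$-linear (hence $k$-linear) isomorphism $B/J\to J^i/J^{i+1}$, and $B^\times=\{a: w(a)=0\}$ justifies discarding the unit $u$ in $\pi^e=\pi_F u$.
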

\begin{corol}
    $[D:F]=1$ if and only if $[\Delta:k]=1$.
\label{artin nonzero}
\end{corol}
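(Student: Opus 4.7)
The plan is to extract both directions directly from the two numerical identities $ed = n$ and $ee'f^2 = n^2$ supplied by Lemma \ref{ed}, treating the statement as a bookkeeping consequence of those integer relations. No further structural input from valuation theory is needed beyond knowing that $e,d,e',f,n$ are all well-defined positive integers.

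For the forward direction, I would assume $[D:F] = 1$, i.e. $n = 1$. Then $ed = n = 1$ forces $e = d = 1$, and $ee'f^2 = n^2 = 1$ forces $e' = f = 1$. Since $k \subseteq k' \subseteq \Delta$ with $[k':k] = e'$ and $[\Delta:k'] = f^2$, we get $[\Delta:k] = e' f^2 = 1$, hence $\Delta = k$.

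For the reverse direction, I would assume $[\Delta:k] = 1$. Using the tower $k \subseteq k' \subseteq \Delta$, this forces $e' = [k':k] = 1$ and $f^2 = [\Delta:k'] = 1$, so $f = 1$. Substituting into $ee'f^2 = n^2$ yields $e = n^2$, and combining with $ed = n$ gives $n^2 d = n$, i.e. $nd = 1$ as a relation between positive integers. Therefore $n = d = 1$ and $[D:F] = n^2 = 1$.

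There is no serious obstacle: the only thing one must keep in mind is that each of $e,d,e',f,n$ is a positive integer (so that $nd = 1$ indeed forces $n = d = 1$), and that $\Delta$ is a division ring containing $k'$ which contains $k$, so the degree $[\Delta:k]$ factors as $e' f^2$. Both of these are built into the setup (\ref{ram}) and the statements of the preceding proposition and lemma.
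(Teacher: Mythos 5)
Your argument is correct and is exactly the route the paper intends: the paper's proof simply says the corollary is immediate from Lemma \ref{ed}, and your write-up supplies the straightforward integer bookkeeping ($ed=n$, $ee'f^2=n^2$, with all quantities positive integers and $[\Delta:k]=e'f^2$) that makes "immediate" precise. Nothing is missing.
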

\begin{proof}
This is immediate from the above lemma.
\end{proof}
In this paper, we are interested in the case that the residue field $k$ is \emph{quasi-algebraically closed}, i.e a $C_1$ field. Recall that a finite extension of a $C_1$ field is also $C_1$. Hence, the central division algebra $\Delta$ over $k'$ will be isomorphic to $k'$, since $\text{Br}(k')=0$. This implies $f=1$. 
\begin{lemma}
Suppose that the residue field $k$ is $C_1$ and $[k:k^p]=p$. Then $e=e'=n$ and $d=1$.
\end{lemma}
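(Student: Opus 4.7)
The plan is to construct a commutative subfield $L \subset D$ containing $F$ whose residue field equals $k'$; then the fundamental equality $e(L/F)\,f(L/F) = [L:F] \leq n$ for this finite extension of complete discretely valued fields will force $e' = [k':k] \leq n$, which combined with the formulas below yields the conclusion. By the preceding paragraph $\Delta = k'$ and $f = 1$, so Lemma~\ref{ed} simplifies to $ed = n$ and $ee' = n^2$, giving $e' = nd$. Hence it suffices to show $e' \leq n$, which will force $d = 1$ and then $e = e' = n$.

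To produce such an $L$, I first establish that $k'/k$ is monogenic using the hypothesis $[k:k^p] = p$. Let $k'_s = k(\beta)$ be the maximal separable subextension. Since separable extensions preserve the $p$-rank, $[k'_s : (k'_s)^p] = p$, and the identity $(k'_s)^p \cap k = k^p$ (which holds because any $p$-th root in $k'_s$ of an element of $k$ already lies in $k$) shows that a $p$-basis element $b$ of $k$ remains a $p$-basis element of $k'_s$; hence $k' = k'_s(b^{1/p^m})$ for some $m \geq 0$. Setting $\gamma = \beta + b^{1/p^m}$, one computes $\gamma^{p^m} = \beta^{p^m} + b$, and using the elementary identity $k(\beta^{p^m}) = k(\beta) = k'_s$ for separable $\beta$ (a consequence of the factorization $x^{p^m} - \beta^{p^m} = (x-\beta)^{p^m}$ combined with separability), we get $\beta^{p^m} = \gamma^{p^m} - b \in k(\gamma)$, hence $\beta \in k'_s \subseteq k(\gamma)$, and then $b^{1/p^m} = \gamma - \beta \in k(\gamma)$; thus $k' = k(\gamma)$.

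Finally, I lift the primitive element $\bar\gamma \in k' = \Delta = B/J$ to some $\gamma \in B$. The $F$-subalgebra $L := F[\gamma] \subset D$ is commutative and finite-dimensional, and being contained in the division algebra $D$ it is a subfield. Its residue field, a subfield of $\Delta = k'$ containing $k[\bar\gamma] = k'$, equals $k'$, so $f(L/F) = e'$. Combined with $e(L/F)\,f(L/F) = [L:F] \leq n$, this gives $e' \leq n$, and therefore $d = 1$ and $e = e' = n$, as desired. The main obstacle is the monogenicity step: it crucially uses $[k:k^p] = p$ to force the purely inseparable part of $k'/k$ to be generated by a single $p$-power root, without which the construction of a single primitive element $\bar\gamma$ could fail; the remainder of the argument is a standard application of the structure theory of valuations on division algebras over complete discretely valued fields.
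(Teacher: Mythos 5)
Your proof is correct and follows essentially the same route as the paper: both reduce, via $ed=n$ and $ee'f^2=n^2$ with $f=1$, to showing $e'\leq n$, and both achieve this by proving $k'/k$ is monogenic (using that separable extensions preserve the $p$-rank, so the purely inseparable part is generated by a single $p$-power root) and then lifting a primitive element to a commutative subfield of $D$ of degree at most $n$. The only cosmetic difference is that you construct the primitive element $\gamma=\beta+b^{1/p^m}$ explicitly via the Frobenius identity, where the paper instead invokes the primitive element theorem for extensions with at most one inseparable generator.
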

\begin{proof}
    $ $\\
We already have $f=1$ and so it suffices to show $e'\leq n$ by Lemma \ref{structureofrda}. We will show that any field extension of $k$ is simple. In this case, $k'=k[\alpha]$ and we choose $\beta\in B$ such that $\bar{\beta}=\alpha\in k'$. Then we have $e'\leq [F(\beta):F]\leq n$, since $\text{ind}(D)$ is $n$.

Now we prove that any finite field extension $k'$ of $k$ is simple. The field extension $k\subset k'$ can be written as a chain of field extensions $k\subset l\subset k'$ such that $l$ is separable over $k$ and $k'$ is purely inseparable over $l$. It follows that $[l:l^p]=p$ by Lemma \ref{extofprank} below. Then the purely inseparable extension $k'/l$ is simple. Set $k'=l[\alpha_1]$ and $\alpha_1$ is algebraic over $k$. We can also denote $l=k[\alpha_2]$ by Theorem \ref{primitive} below, since $l/k$ is finite and separable. Finally, we get $k\subset k'=k[\alpha_1,\alpha_2]$ is simple by Theorem \ref{primitive} again.
\end{proof}
\begin{lemma}[\cite{MR1994218}*{A.V.135, Corollary 3}]
Let $l/k$ be a finite or separable field extension of fields of characteristic $p$, and let $n$ be the $p$-rank of $k$. Then the $p$-rank of $l$ is also $n$.
\label{extofprank}
\end{lemma}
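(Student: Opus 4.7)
The plan is to treat two cases separately: $l/k$ finite (not necessarily separable), and $l/k$ algebraic separable (possibly infinite). I will read ``separable'' as ``separable algebraic,'' since a purely transcendental extension such as $k(t)/k$ already raises the $p$-rank by $1$; the stated conclusion is only reasonable in the algebraic setting.

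For the finite case, the key observation is that the Frobenius $F: x \mapsto x^p$ is an isomorphism of fields $l \xrightarrow{\sim} l^p$ which restricts to an isomorphism $k \xrightarrow{\sim} k^p$. In particular $[l^p:k^p] = [l:k]$. Computing the total degree $[l:k^p]$ via the two intermediate towers $k^p \subset k \subset l$ and $k^p \subset l^p \subset l$ gives
\[
[l:k]\,[k:k^p] \;=\; [l:k^p] \;=\; [l:l^p]\,[l^p:k^p] \;=\; [l:l^p]\,[l:k],
\]
and cancelling the finite factor $[l:k]$ yields $[k:k^p] = [l:l^p]$, as required.

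For the algebraic separable case I would reduce to the finite case by passing to finite subextensions. Write $l = \bigcup_\alpha l_\alpha$ as the filtered union of its finite subextensions over $k$; each $l_\alpha/k$ is finite separable. Fix a $p$-basis $\{x_1,\dots,x_n\}$ of $k$. The finite case applied to each $l_\alpha/k$ shows $\{x_1,\dots,x_n\}$ remains a $p$-basis of every $l_\alpha$. One then verifies directly that the monomials $x^E = \prod x_i^{e_i}$, $0 \le e_i < p$, form a basis of $l$ over $l^p$: they span, since every $y \in l$ lies in some $l_\alpha$ and there admits an expansion with coefficients in $l_\alpha^p \subset l^p$; and they are linearly independent over $l^p$, since any relation $\sum c_E x^E = 0$ with $c_E = d_E^p$ is witnessed inside a single $l_\alpha$ containing all of the $d_E$, where the finite case forces every $c_E$ to vanish. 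Thus the $p$-rank of $l$ equals $n$.

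There is no real obstacle here; the whole argument is elementary degree-counting via Frobenius together with a routine direct-limit argument. The one subtlety worth being careful about is the interpretation of ``separable'' as algebraic --- without this restriction the statement would fail for transcendental extensions, and the passage to finite subextensions in the second case would be meaningless.
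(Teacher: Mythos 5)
The paper offers no proof of this lemma --- it is quoted directly from Bourbaki (A.V.135, Corollary 3) --- so your argument can only be judged on its own terms. Your finite case is correct and is the standard one: comparing the towers $k^p\subset k\subset l$ and $k^p\subset l^p\subset l$, using that Frobenius gives $[l^p:k^p]=[l:k]$, and cancelling the finite factor $[l:k]$ yields $[l:l^p]=[k:k^p]$. Your reading of ``separable'' as ``separable algebraic'' is also the right one, for exactly the reason you give.

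There is, however, a genuine gap in the infinite case, at the sentence ``The finite case applied to each $l_\alpha/k$ shows $\{x_1,\dots,x_n\}$ remains a $p$-basis of every $l_\alpha$.'' The finite case proves only the degree equality $[l_\alpha:l_\alpha^p]=p^n$; it says nothing about which subsets are $p$-bases, and the persistence of a fixed $p$-basis of $k$ is false for general finite extensions: for $k=\mathbb{F}_p(x)$ and $l_1=\mathbb{F}_p(x^{1/p})$, the element $x$ is a $p$-basis of $k$ but lies in $l_1^p$, so it is not a $p$-basis of $l_1$ even though $[l_1:l_1^p]=p$. This matters because your direct-limit argument, as written, never actually uses separability and would therefore equally ``prove'' the statement for the perfect closure $\bigcup_n\mathbb{F}_p(x^{1/p^n})$ of $\mathbb{F}_p(x)$, whose $p$-rank is $0$. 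The missing ingredient --- and the one place separability must enter --- is the identity $l_\alpha=k\,l_\alpha^p$ for a finite separable extension: the intermediate extension $l_\alpha/k\,l_\alpha^p$ is separable (being a subextension of $l_\alpha/k$) and purely inseparable (since $l_\alpha^p\subset k\,l_\alpha^p$), hence trivial. Granting this, $l_\alpha^p(x_1,\dots,x_n)\supseteq l_\alpha^p\cdot k^p(x_1,\dots,x_n)=l_\alpha^p\cdot k=l_\alpha$, so the $p^n$ monomials $x^E$ span $l_\alpha$ over $l_\alpha^p$ and are a basis by your degree count. With that lemma inserted, the passage to the filtered union goes through as you describe.
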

\begin{theorem}[\cite{milne2022}*{Theorem 5.1}]
    Let $E=F[\alpha_1,\cdots,\alpha_r]$ be a finite extension of $F$, and assume that $\alpha_2,\cdots,\alpha_r$ are separable over $F$ (but not necessarily $\alpha_1)$. Then there exists a $\gamma\in E$ such that $E=F[\gamma]$.
    \label{primitive}
\end{theorem}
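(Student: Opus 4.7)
The plan is to induct on the number of generators $r$, reducing to the case $r=2$, and then split on whether $F$ is finite or infinite. For $r=1$ the claim is trivial. For $r\geq 2$, I would apply the inductive hypothesis to the subextension $F[\alpha_2,\ldots,\alpha_r]$, whose generators are all separable, obtaining $F[\alpha_2,\ldots,\alpha_r]=F[\delta]$ for some $\delta$. Since $\delta$ lies in a separable extension of $F$, it is itself separable over $F$. Hence $E=F[\alpha_1,\delta]$, which is precisely the hypothesis of the theorem with $r=2$, reducing everything to the two-generator case (with the second generator separable).

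If $F$ is finite then $E$ is a finite field, its multiplicative group is cyclic, and any cyclic generator $\gamma$ of $E^{\times}$ satisfies $E=F[\gamma]$. If $F$ is infinite, I would write $E=F[\alpha,\beta]$ with $\beta$ separable over $F$, let $f,g\in F[X]$ denote the minimal polynomials of $\alpha$ and $\beta$, fix a splitting field $L/F$ of $fg$, and label the roots $\alpha=\alpha_1,\ldots,\alpha_m$ and $\beta=\beta_1,\ldots,\beta_n$ in $L$ (the $\beta_j$ being distinct by separability of $\beta$). Because $F$ is infinite, I can choose $c\in F$ outside the finite set of solutions to $\alpha_1+c\beta_1=\alpha_i+c\beta_j$ with $(i,j)\neq(1,1)$, and set $\gamma:=\alpha+c\beta\in E$.

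The key computation is then to consider $h(X):=f(\gamma-cX)\in F[\gamma][X]$. Both $g$ and $h$ vanish at $\beta$; however, if $h(\beta_j)=0$ for some $j\neq 1$, then $\gamma-c\beta_j=\alpha_i$ for some $i$, contradicting the choice of $c$. Therefore the greatest common divisor of $g$ and $h$, computed a priori in $L[X]$, must equal $X-\beta$. Since the gcd of two polynomials in $F[\gamma][X]$ again lies in $F[\gamma][X]$, this forces $\beta\in F[\gamma]$, whence $\alpha=\gamma-c\beta\in F[\gamma]$ and $E=F[\gamma]$, as desired. The main subtlety is the argument that the generic $c$ truly isolates $\beta$ as the unique common root of $g$ and $h$; this relies essentially on the separability hypothesis and is exactly where the theorem would fail if two generators were allowed to be inseparable.
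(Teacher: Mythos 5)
Your proof is correct and is essentially the classical Lagrange-style argument that the paper delegates to \cite{milne2022}*{Theorem 5.1}: reduce to two generators with the second separable, handle finite $F$ via cyclicity of $E^{\times}$, and for infinite $F$ choose a generic $c$ so that $\gcd\bigl(g(X),f(\gamma-cX)\bigr)=X-\beta$ in $F[\gamma][X]$. No substantive gaps; the only cosmetic point is that in defining the bad set of $c$ you should range over the \emph{distinct} roots (or simply restrict to pairs with $j\neq 1$, which is all the argument uses), so that the excluded set is genuinely finite even when $f$ has repeated roots.
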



\section{Kato's Swan conductors and Applications to Brauer groups}

Let $F$ be a field with a discrete valuation $v$. Let $\mathcal{O}_F$ be the valuation ring 
\[
\mathcal{O}_F = \{x\in F:\ v(x)\geq 0\},
\]
and let $k\in \mathcal{O}_F/\mathcal{m}$ be the residue field.

When the discretely valued field $F$ is henselian of characteristic $p>0$, Kato \cite{MR991978}*{Page 110} and Izhboldin \cite{MR1386649} analyzed the {\it wild quotient} of $H^{n+1,n}(F)=H^{n+1}(F, \mathbb{Z}/p(n)) = H^1(F,\Omega^n_{X,\log})$ (Section \ref{etalemotivic}), which is the quotient of this group by its tamely ramified part (defined below). Totaro \cite{MR4411477} generalizes the result to arbitrary discrete valuation fields.

Kato defined an increasing filtration of $H^{n+1,n}(F)$ as follows: For $i\geq 0$, let $M_i$ be the subgroup of $H^{n+1,n}(F)$ generated by elements of the form 
\[
a\dfrac{db_1}{b_1}\wedge\cdots\wedge\frac{db_n}{b_n}
\]
with $a \in F$, $b_1,\dots, b_n \in F^\times$, and $v(a) \geq -i$. It is clear that
\[
0\subset M_0\subset M_1\subset \cdots,
\]
with $\bigcup_{i\geq 0}M_i=H^{n+1,n}(F)$. 

Let $t\in \mathcal{O}_F$ be a uniformizer for $v$. For any $j>0$, we define two homomorphisms depending on whether $j$ is relatively prime to $p$ or $p \mid j$. In each case, a simple computation shows that the homomorphim is well defined up to a choice of a uniformizer. First, consider the case when $j$ is relatively prime to $p$. We define
\[
\Omega^n_k\rightarrow M_j/M_{j-1}
\]
by
\[
\bar{a}\frac{d\bar{b}_1}{\bar{b}_1}\wedge\cdots\wedge\frac{d\bar{b}_n}{\bar{b}_n}\mapsto \frac{a}{t^j}\frac{db_1}{b_1}\wedge\cdots\wedge\frac{db_n}{b_n}\ (\textup{mod}\ M_{j-1}),
\]
for $a\in \mathcal{O}_F$ and $b_1,\dots,b_n\in \mathcal{O}^\times_F$. 

Now we define the second homomorphism. Let $Z_k^n$ be the subgroup of closed forms in $\Omega^n_k$. For $j>0$ and $p\mid j$, define a homomorphism 
\[
\Omega^n_k/Z^n_k\oplus \Omega^{n-1}_k/Z^{n-1}_k\rightarrow M_j/M_{j-1}
\]
as follows: On the first summand, it is defined as 
\[
\bar{a}\frac{d\bar{b}_1}{\bar{b}_1}\wedge\cdots\wedge\frac{d\bar{b}_n}{\bar{b}_n}\mapsto\frac{a}{t^j}\frac{db_1}{b_1}\wedge\cdots\wedge\frac{db_n}{b_n}\ (\textup{mod}\ M_{j-1}),
\]
and for the second summand it is defined as 
\[
\bar{a}\frac{d\bar{b}_1}{\bar{b}_1}\wedge\cdots\wedge\frac{d\bar{b}_{n-1}}{\bar{b}_{n-1}}\mapsto\frac{a}{t^j}\frac{dt}{t}\wedge\frac{db_1}{b_1}\wedge\cdots\wedge\frac{db_{n-1}}{b_{n-1}}\ (\textup{mod}\ M_{j-1}),
\]
where $a\in \mathcal{O}_F$ and $b_1,\dots,b_n\in \mathcal{O}_F^\times$.

The homomorphisms are well defined (although they depend on the choice of uniformizer $t$). We recall Cartier's theorem in this context. It says that, for $k$ of characteristic $p>0$, the subgroups $Z^n_k$ of closed forms in $\Omega^n_k$ is generated by the exact forms together with the forms of the form $a^p(db_1/b_1)\wedge\cdots\wedge(db_n/b_n)$ \cite{MR1386649}*{Lemma 1.5.1}.

To describe the subgroup $M_0$, we need to describe tame extensions of $F$ \cite{MR4411477}. We fix a discrete valuation $v$ as above. An extension field of $F$ is called {\it tame} with respect to $v$  if it is a union of finite extensions of $F$ for which the extension of residue fields is separable and the ramification degree is invertible in the residue field $k$. Let $F_{\textup{tame}}$ be the maximal tamely ramified extension of $F$ (with respect to $v$) in a separable closure of $F$. Define the {\it tame} (or {\it tamely ramified}) subgroup of $H^i(F,\mathbb{Z}/p(j))$ by
\[
H^i_{\textup{tame}}(F,\mathbb{Z}/p(j)) = \textup{ker}\Bigl(
H^i(F, \mathbb{Z}/p(j))\rightarrow H^i(F_{\textup{tame}},  \mathbb{Z}/p(j))
\Bigr).
\]
There is residue homomorphism on the tamely ramified subgroup
\[
\partial_v:H^i_{\textup{tame}}(F,\mathbb{Z}/p(j))\rightarrow H^{i-1}(k,\mathbb{Z}/p(j-1)),
\]
characterized by the property that
\[
\partial_v(a\frac{dt}{t}\wedge\frac{db_1}{b_1}\wedge\cdots\wedge\frac{db_{n-1}}{b_{n-1}})=\bar{a}\frac{d\bar{b}_1}{\bar{b}_1}\wedge\cdots\wedge\frac{d\bar{b}_{n-1}}{\bar{b}_{n-1}},
\]
where $a\in\mathcal{O}_F, b_1,\dots,b_{n-1}\in \mathcal{O}_F^\times$. Note that this description of elements of the tamely ramified subgroup follows from the theorem below. Then we define the {\it unramified} subgroup $H^n_{\textup{nr}}(F,\mathbb{Z}/p(j))$ to be the kernel of the residue homomorphism $\partial_v$.
\begin{theorem}
[{\cite{MR4411477}*{Theorem 4.3}}]
\label{totarogeneral}
    Let $F$ be a field of characteristic $p>0$ with a discrete valuation $v$ and residue field $k$. Then $H^{n+1,n}(F)$ is the union of the increasing sequence of subgroups $M_0\subset M_1\subset \cdots$ defined above, with isomorphisms (depending on a choice of uniformizer in $F$):
    \[
     M_j/M_{j-1}\cong \left\{
 \begin{aligned}
&\Omega^n_k &\textup{if}\ j>0 \ &\textup{and}\ p\nmid j,\\
&\Omega^n_k/Z^n_k\oplus \Omega^{n-1}_k/Z^{n-1}_k&\textup{if}\ j>0 \ &\textup{and}\ p\vert j.
 \end{aligned}
 \right.
    \]
Moreover, $M_0$ is the tame subgroup $H^{n+1,n}_{\textup{tame}}(F) = H^{n + 1}_{\textup{tame}}(F,\mathbb{Z}/p(n))$ defined above, and there is a well-defined residue homomorphism on $M_0$, yielding an exact sequence
\[
\xymatrix{
0\ar[r]&  H^{n+1,n}_{\textup{nr}}(F)\ar[r]& H^{n+1,n}_{\textup{tame}}(F)\ar[r]_-{\partial_v}& H^{n,n-1}(k)\ar[r]& 0, 
}
\]
where $H^{n+1,n}_{\textup{nr}}(F)$ is the unramified subgroup with respect to $v$. Finally, if the field $F$ is henselian with respect to $v$, then $H^{n+1,n}_{\textup{nr}}(F)\simeq H^{n+1,n}(k).$
\end{theorem}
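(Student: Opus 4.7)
The plan is to follow the Kato--Izhboldin--Totaro strategy: identify $H^{n+1,n}(F)$ with the cokernel of $C^{-1}-1$ via the Cartier exact sequence, and then analyze the filtration $\{M_j\}$ and its graded pieces by tracking how Frobenius-twisted differentials interact with the valuation. More precisely, the exact sequence of \'etale sheaves
\[
0\rightarrow \Omega^n_{F,\log}\rightarrow \Omega^n_F\xrightarrow{C^{-1}-1}\Omega^n_F/d\Omega^{n-1}_F\rightarrow 0
\]
together with $H^1(F,\Omega^n_F)=0$ identifies $H^{n+1,n}(F)=H^1(F,\Omega^n_{F,\log})$ with $\Omega^n_F/(\mathrm{im}(C^{-1}-1)+d\Omega^{n-1}_F)$. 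Lift $M_j$ to $\widetilde{M}_j\subset \Omega^n_F$ generated by the corresponding symbols. Since every element of $\Omega^n_F$ is a sum of logarithmic symbols and every $b\in F^\times$ splits as $b=t^m u$ with $u\in \mathcal{O}_F^\times$, the union $\bigcup_j\widetilde{M}_j$ covers $\Omega^n_F$, proving the first assertion.

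For the surjectivity of the candidate maps onto $M_j/M_{j-1}$, expand a generator $(a/t^j)\,(db_1/b_1)\wedge\cdots\wedge(db_n/b_n)$ with $a\in \mathcal{O}_F$ and $b_i=t^{m_i}u_i$; by the alternating property, only the terms with at most one $dt/t$ factor survive, yielding the two summands in the $p\mid j$ case. When $p\nmid j$, the identity $d(a/t^j)/(a/t^j)\equiv -j\,dt/t\pmod{\mathcal{O}_F^\times}$-log terms shows that the $dt/t$-summand lies already in $\widetilde{M}_{j-1}$ modulo exact forms, so only $\Omega^n_k$ remains.

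The technical heart is showing that these maps are well-defined and injective on graded pieces. For well-definedness one must verify that lifts with $a\in \mathfrak{m}_F$ or with $b_i$'s having an ambiguity lie in $\widetilde{M}_{j-1}+ (C^{-1}-1)\Omega^n_F + d\Omega^{n-1}_F$, and this is immediate except for the interaction with $C^{-1}$. Here the key observation is that $C^{-1}$ sends depth $i$ to depth $pi$, so the Frobenius contributes only to strata indexed by $j$ divisible by $p$; this forces the quotient by closed forms $Z^n_k$, resp.\ $Z^{n-1}_k$, via Cartier's theorem which identifies $Z^n_k$ with the image of $C^{-1}-1$ in the appropriate sense. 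For injectivity, suppose a class in $\widetilde{M}_j$ reduces to zero in the candidate $\Omega^n_k$ (or $\Omega^n_k/Z^n_k\oplus\Omega^{n-1}_k/Z^{n-1}_k$) quotient; a careful bookkeeping of the depths of $(C^{-1}-1)\eta+d\eta'$ allowed to cancel it shows the class must actually lie in $\widetilde{M}_{j-1}$, using that $C^{-1}$ and $d$ respect the filtration in a controlled manner.

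Finally, for the $M_0$-assertion, one checks that any symbol $a\,(db_1/b_1)\wedge\cdots\wedge(db_n/b_n)$ with $a\in\mathcal{O}_F$ and $b_i\in F^\times$ is tamely ramified: after writing $b_i=t^{m_i}u_i$, the element becomes a sum of a piece with $a\in \mathcal{O}_F, b_i\in \mathcal{O}_F^\times$ (unramified) and a piece involving $dt/t$, and $\partial_v$ recovers the residue. The converse inclusion $H^{n+1,n}_{\mathrm{tame}}(F)\subset M_0$ follows because passing to $F_{\mathrm{tame}}$ kills the classes with $v(a)\geq -i$ only up to bounded depth; a class of positive depth survives by the graded piece computation just established. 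The final sentence, on the henselian case, follows from the fact that for $F$ henselian the natural map from $H^{n+1,n}(\mathcal{O}_F)$ to $H^{n+1,n}(k)$ is an isomorphism (since reduction along the closed point induces equivalence on the small \'etale site) and that unramified classes extend uniquely to $\mathcal{O}_F$.

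The main obstacle is the injectivity step in the graded piece calculation, where one must disentangle the additive relations coming from $(C^{-1}-1)$ and from exact forms $d\eta'$ at each depth $j$. The bookkeeping is delicate because $C^{-1}$ multiplies depths by $p$, so relations between different strata of the filtration only arise in the $p\mid j$ stratum, but one must carefully check that the ambiguity in the choice of uniformizer $t$ does not shift the graded quotient, and that Cartier's characterization of closed forms accounts for exactly the subgroups $Z^n_k$ and $Z^{n-1}_k$ that appear.
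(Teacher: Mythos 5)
First, a point of reference: the paper does not prove this statement at all --- it is quoted from Totaro \cite{MR4411477}*{Theorem 4.3} (which in turn builds on Kato \cite{MR991978} and Izhboldin \cite{MR1386649}), so there is no in-paper argument to compare yours against. Your outline does follow the standard Kato--Izhboldin--Totaro route: present $H^{n+1,n}(F)$ as the cokernel of $C^{-1}-1$ on $\Omega^n_F/d\Omega^{n-1}_F$ via the logarithmic Cartier sequence, filter by the pole order of the coefficient, and observe that $C^{-1}$ multiplies depth by $p$, so that the extra relations responsible for the quotients by $Z^n_k$ and $Z^{n-1}_k$ (via Cartier's description of closed forms) occur exactly in the strata with $p\mid j$. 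That is the correct mechanism, and your treatment of the ``union'' claim and of surjectivity onto the graded pieces is fine.

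The genuine gap is that the substance of the theorem --- injectivity of the maps onto $M_j/M_{j-1}$, equivalently that the relations $(C^{-1}-1)\eta+d\eta'$ do not collapse the filtration further --- is reduced in your writeup to ``a careful bookkeeping of depths'' that is never carried out. This is exactly where the difficulty lives: an element of $\widetilde{M}_j$ may be expressed as $(C^{-1}-1)\eta+d\eta'$ with $\eta$ of depth far exceeding $j$, and one needs a descending induction (or an a priori bound) showing that the depth of $\eta$ can always be reduced; Kato's original argument uses completeness for this, and Totaro's contribution is precisely to make it work for an arbitrary discretely valued field. Asserting that $C^{-1}$ and $d$ ``respect the filtration in a controlled manner'' is a restatement of the conclusion, not a proof. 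Two further points need repair: for $M_0=H^{n+1,n}_{\textup{tame}}(F)$, the inclusion $H^{n+1,n}_{\textup{tame}}(F)\subset M_0$ requires the compatibility of the filtration with tame base change (depth is multiplied by the ramification index $e$ prime to $p$, and $\Omega^n_k\to\Omega^n_{k'}$ is injective for $k'/k$ separable), which you only gesture at; and the henselian assertion does not follow from an ``equivalence of small \'etale sites'' --- only the finite \'etale sites of $\mathcal{O}_F$ and $k$ are equivalent, and $\Omega^n_{\log}$ is not locally constant. The correct input there is Gabber's affine analogue of proper base change for torsion \'etale sheaves applied to the henselian pair $(\mathcal{O}_F,\mathfrak{m}_F)$, together with an identification of the restriction of $\Omega^n_{\mathcal{O}_F,\log}$ to the closed point and of the image of $H^1(\mathcal{O}_F,\Omega^n_{\log})$ in $H^{n+1,n}(F)$ with the unramified subgroup.
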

\begin{definition}[Kato's Swan conductor]
\label{defsw}
Let $F$ be a field of characteristic $p>0$ with a discrete valuation $v$ and residue field $k$ and $\omega\in H^{n+1,n}(F)$. We define (Kato's) Swan conductor $\text{sw}_{F,v}(\omega)$ as follows:
\[\text{sw}_{F,v}(\omega):=\min\{n\in \mathbb{Z}: \omega\in M_n\}.
\]
\end{definition}
We write $\text{sw}(\bullet)$ instead of $\text{sw}_{K,v}(\bullet)$ for the sake of convenience when the notation is unambiguous. Here $H^{n+1,n}(F)$ is the union of the increasing sequence of subgroups $M_0\subset M_1\subset \cdots$ as defined in Theorem \ref{totarogeneral}.
\begin{remark}
    We should point out that the above filtration $\{M_i\}_i$ of $H^{n+1,n}(F)=H^{n+1}(F,\mathbb{Z}/p(n))$ for henselian fields $F$ has been discussed in detail in  \cite{MR991978}*{Page 110}. The result there works for more general coefficients, namely $\mathbb{Z}/p^m$. We  hope for a similar result for coefficients $\mathbb{Z}/p^m$ for general discretely valued fields.
\end{remark}

In this paper, we are mainly interested in the $p^\infty$-torsion Brauer group of the field $F$. Since the higher torsion part of Brauer groups can be recovered from the $p$-torsion part (Theorem \ref{hightop}), we focus on the $p$-torsion Brauer group $\text{Br}(F)[p]\cong H^{2,1}(F)$ in the rest of this section. Then Theorem \ref{totarogeneral} gives: 

\begin{theorem}[Structure Theorem for $p$-torsion part of Brauer groups]
\label{sbr}
    Let $F$ be a field of characteristic $p>0$ with a discrete valuation $v$ and residue field $k$. Then $\text{Br}(F)[p]$ is the union of the increasing sequence of subgroups $M_0\subset M_1\subset \cdots$ defined above, with isomorphisms (depending on a choice of uniformizer in $F$):
    \[
     M_j/M_{j-1}\cong \left\{
 \begin{aligned}
&\Omega^1_k &\textup{if}\ j>0 \ &\textup{and}\ p\nmid j,\\
&\Omega^1_k/Z^1_k\oplus k/k^p&\textup{if}\ j>0 \ &\textup{and}\ p\vert j.
 \end{aligned}
 \right.
    \]

Moreover, $M_0$ is the tame subgroup $\text{Br}_{\textup{tame}}(F)[p]$, and there is a well-defined residue homomorphism on $M_0$ yielding an exact sequence
\[
\xymatrix{
0\ar[r]&  \text{Br}_{\textup{nr}}(F)[p]\ar[r]& \text{Br}_{\textup{tame}}(F)[p]\ar[r]_-{\partial_v}& H^1(k,\mathbb{Z}/p)\ar[r]& 0, 
}
\]
where $\text{Br}_{\textup{nr}}(F)[p]$ is the unramified subgroup with respect to $v$. Finally, if the field $F$ is henselian with respect to $v$, then $\text{Br}_{\textup{nr}}(F)[p]\cong \text{Br}(k)[p].$
\end{theorem}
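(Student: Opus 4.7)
The plan is to derive Theorem \ref{sbr} as the direct specialization of Theorem \ref{totarogeneral} to $n = 1$, after identifying $\text{Br}(F)[p]$ with $H^{2,1}(F) = H^2(F, \mathbb{Z}/p(1))$. This identification is supplied by Section \ref{logintro}: the short exact sequence of \'etale sheaves relating $\mathbb{G}_m$ to the logarithmic de Rham--Witt sheaf $W_1\Omega^1_{\textup{log}} = \Omega^1_{\textup{log}}$ yields $\text{Br}(F)[p] \cong H^1(F, \Omega^1_{\textup{log}})$, and by the description of \'etale motivic cohomology in Section \ref{etalemotivic} this is precisely $H^{2,1}(F)$. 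Once this bridge is in place, the filtration $\{M_j\}$ on $\text{Br}(F)[p]$ is literally Kato's filtration on $H^{2,1}(F)$, and the generators $[a\,\dlog{b}]$ used to define $M_j$ correspond to the $p$-symbol algebras $[a, b)$ under $\delta_1$.

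Next I would transcribe the graded-piece description from Theorem \ref{totarogeneral} at $n = 1$. For $j > 0$ with $p \nmid j$, the isomorphism $M_j/M_{j-1} \cong \Omega^1_k$ is immediate. For $j > 0$ with $p \mid j$, Theorem \ref{totarogeneral} gives $\Omega^1_k/Z^1_k \oplus \Omega^0_k/Z^0_k$, and the second summand simplifies to $k/k^p$: indeed $\Omega^0_k = k$, and the subgroup $Z^0_k$ of closed $0$-forms is the kernel of $d\colon k \to \Omega^1_k$, which in characteristic $p$ is exactly $k^p$. This is the only auxiliary identification needed for the graded pieces.

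Finally, the tame/unramified exact sequence is the $n = 1$ instance of the sequence in Theorem \ref{totarogeneral}, after noting that the target $H^{1,0}(k) = H^1(k, \mathbb{Z}/p(0)) = H^1(k, \mathbb{Z}/p)$ is precisely the Artin--Schreier group classifying cyclic $\mathbb{Z}/p$-extensions of $k$. For the henselian statement, Theorem \ref{totarogeneral} already provides $H^{2,1}_{\textup{nr}}(F) \cong H^{2,1}(k)$, and reapplying the identification $\text{Br}(k)[p] \cong H^{2,1}(k)$ to the residue field completes the proof. Since Theorem \ref{totarogeneral} has done all the heavy lifting, no significant obstacle arises; the only care needed is verifying that Kato's filtration by symbolic representatives matches the one on $\text{Br}(F)[p]$ transported through $\delta_1$, which is a routine unraveling of the explicit construction in Section \ref{logintro}.
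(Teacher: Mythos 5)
Your proposal is correct and is exactly the route the paper takes: the paper presents Theorem \ref{sbr} as the $n=1$ specialization of Theorem \ref{totarogeneral} under the identification $\text{Br}(F)[p]\cong H^{2,1}(F)$ from Sections \ref{logintro} and \ref{etalemotivic}. Your extra remarks (that $\Omega^0_k/Z^0_k = k/k^p$ since $Z^0_k=\ker(d)=k^p$, and that $H^{1,0}(k)=H^1(k,\mathbb{Z}/p)$) supply the small identifications the paper leaves implicit, so nothing is missing.
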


The following lemma explains how the Swan conductor of the class $a\dlog{1+b}$ is affected by the valuations of $a, b$.
\begin{lemma}[Kato \cite{MR991978}]
Let $a, b\in F, \ i, j\in \mathbb{Z}$, and assume that $v_F(a)\geq -i,\ v_F(b)\geq j>0$. Then we have
\begin{align}
    a\dlog{1+b}\in M_{i-j}.
\end{align}
More precisely, if $a\neq 0$, we have
\label{swift}
\begin{align}
    a\dlog{1+b}+ab\dlog{a}\in M_{i-2j}.
\end{align}
\end{lemma}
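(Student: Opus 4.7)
The plan is to derive a key identity in $\Omega^1_F/dF$ by direct algebra and then invoke it twice. If $a=0$ both claims are trivial, and if $b=0$ then $\dlog{1+b}=0$, so I may assume $a, b \in F^\times$.

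Starting from $a\,\dlog{1+b} = a\,db/(1+b)$ and the elementary identity $\tfrac{1}{1+b} = 1 - \tfrac{b}{1+b}$, I would write
\[
a\,\dlog{1+b} \;=\; a\,db \;-\; ab\,\dlog{1+b} \quad \text{in } \Omega^1_F.
\]
The Leibniz rule gives $d(ab) = a\,db + b\,da \in dF$, so modulo $dF$ one has $a\,db \equiv -b\,da = -ab\,\dlog{a}$. Substituting produces the key identity
\[
a\,\dlog{1+b} + ab\,\dlog{a} \;\equiv\; -ab\,\dlog{1+b} \pmod{dF}. \tag{$*$}
\]

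For the first statement, $(*)$ exhibits $a\,\dlog{1+b}$ as the sum $-ab\,\dlog{a} - ab\,\dlog{1+b}$ in $\Omega^1_F/dF$, hence in $H^{2,1}(F) = \textup{Br}(F)[p]$. Both summands have the form $c\,\dlog{g}$ with $v_F(c) = v_F(ab) \geq -(i-j)$, so by the very definition of the Kato filtration they lie in $M_{i-j}$.

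For the refined statement, I would apply $(*)$ a second time with $a$ replaced by $ab$:
\[
ab\,\dlog{1+b} + ab^2\,\dlog{ab} \;\equiv\; -ab^2\,\dlog{1+b} \pmod{dF},
\]
so that $-ab\,\dlog{1+b} \equiv ab^2\,\dlog{ab} + ab^2\,\dlog{1+b}$. Substituting into $(*)$ yields
\[
a\,\dlog{1+b} + ab\,\dlog{a} \;\equiv\; ab^2\,\dlog{ab} + ab^2\,\dlog{1+b} \pmod{dF},
\]
and both terms on the right have coefficient $ab^2$ with $v_F(ab^2) \geq -(i-2j)$, so they are naive generators of $M_{i-2j}$. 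The only real obstacle is careful sign bookkeeping; no induction is required, since iterating $(*)$ exactly twice is enough to push the error into $M_{i-2j}$. The essential point is that the Leibniz relation $d(ab)\in dF$ is precisely what allows $a\,db$ to be traded for a logarithmic generator with coefficient of lower valuation.
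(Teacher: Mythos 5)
Your argument is correct and rests on the same mechanism as the paper's proof: trading $a\,db$ for $-b\,da$ modulo $dF$ via the Leibniz rule, so that each such exchange improves the valuation of the coefficient in front of a logarithmic generator by $v_F(b)\geq j$. The only difference is organizational — the paper runs one longer chain of integrations by parts and bounds the resulting error terms $-(ab)\,d\bigl(\tfrac{1}{1+b}\bigr)$ and $\tfrac{b^2}{1+b}\,da$ directly in $M_{i-2j}$, whereas you isolate the reusable identity $(*)$ and apply it twice; both routes are valid and yield the same conclusion.
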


\begin{proof}
\begin{align*}
    a\dlog{1+b}&=\frac{a}{1+b}d(1+b)\\
    &\equiv-(1+b)d(\frac{a}{1+b})\ \text{mod}\ d(F)\\
    &\equiv-bd(\frac{a}{1+b}) \ \text{mod}\ d(F)\\
    &= -(ab)d(\frac{1}{1+b})-(\frac{b}{1+b})da \\
    &\equiv-\frac{b}{1+b}da \ \text{mod}\ M_{i-2j}\\
     &\equiv-bda \ \text{mod}\ M_{i-2j}\\
    &=-(ab)\dlog{a} \ \text{mod}\ M_{i-2j}.
\end{align*}
\end{proof}

The next proposition is the key proposition for the period-index result for complete discretely valued field of characteristic $p>0$.
\begin{prop}[\cite{MR550688}*{Section 4, Lemma 5}]
\label{wildram}
Let $F$ be a complete field with a discrete valuation $v$ and residue field $k$. Suppose that $\textup{char}(k)=p>0$ and $[k:k^p]=p$. Suppose that $\omega \in \textup{Br}(F)[p]$ and $\omega \notin \text{Br}(F_{\text{tame}}/F)[p]$. Then the division algebra $D$ which represents $\omega$ is a degree $p$ division algebra whose residue algebra is a purely inseparable field extension of degree $p$ over $k$.\par
Moreover, suppose that $\text{char}(F)=p>0$. Let $\pi$ be a prime element of $F$. In this case, $D=[a,b)$ 
 where $a\in F, b\in F^\times$, and it must have one of the following two forms:
\begin{align*}
&\romannumeral 1)\ [\dfrac{f}{\pi^{pm}},e\pi), \tx{where}\ f\in \mathcal{O}_F,\ \Bar{f}\notin k^p,m>0,v(e)=0.
\\
&\romannumeral 2)\ [\dfrac{c}{\pi^n},g), \tx{where}\ g\in \mathcal{O}_F,\ \Bar{g}\notin k^p,v(c)=0 \tx{and}\ n \tx{is prime to $p$}.
\end{align*}
\end{prop}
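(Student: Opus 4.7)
The plan is to use Kato's Swan conductor filtration on $\textup{Br}(F)[p]$ to put $\omega$ into a normal form as a single $p$-symbol algebra, and then read off the structure of $D$ from that form. The hypothesis $[k:k^p]=p$ is decisive: it forces $\Omega^1_k$ to be one-dimensional over $k$ and $\Omega^2_k=0$, so every $1$-form is closed and hence $Z^1_k=\Omega^1_k$. Consequently the graded pieces in Theorem \ref{sbr} collapse to $M_j/M_{j-1}\cong\Omega^1_k$ when $p\nmid j$ and $M_j/M_{j-1}\cong k/k^p$ when $p\mid j$, each of which admits a canonical single-symbol representative.

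First I would normalize the leading term. Since $\omega$ is wildly ramified, $j:=\textup{sw}(\omega)\geq 1$. Fix a lift $c\in\mathcal{O}_F^\times$ of a $p$-basis element $\bar{c}\in k$ and a uniformizer $\pi$ of $F$. Then the nonzero image of $\omega$ in $M_j/M_{j-1}$ is represented by a single differential form: $\tfrac{f_0}{\pi^j}\tfrac{dc}{c}$ when $p\nmid j$ (with $\bar{f}_0\in k$ nonzero), or $\tfrac{f_0}{\pi^{pm}}\tfrac{d\pi}{\pi}$ when $j=pm$ (with $\bar{f}_0\notin k^p$). Call this leading symbol $\omega_0$, so $\omega=\omega_0+\omega'$ with $\omega'\in M_{j-1}$.

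The technical heart of the proof is absorbing $\omega'\in M_{j-1}$ into a modification of $\omega_0$ so that $\omega$ itself is represented by a single symbol of the type claimed in (i) or (ii). The idea is to perturb the second slot of the current symbol by a factor $1+\epsilon$ with $v(\epsilon)>0$; Lemma \ref{swift} shows that such a perturbation introduces a correction whose Swan conductor is strictly below the current top level, and a judicious choice of $\epsilon$ cancels the leading term of $\omega'$. Iterating this (the Swan conductor of the remainder strictly decreases at each step) eventually reduces to a tame remainder in $M_0$, which is absorbed by an Artin--Schreier adjustment $a\mapsto a+\mathcal{P}(\alpha)$ on the first slot or by a unit twist of the second slot. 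Linearity of symbols under $\pi c=\pi\cdot c$ lets us switch between the $\tfrac{dc}{c}$-slot and the $\tfrac{d\pi}{\pi}$-slot as the graded-piece type of the remainder alternates between the $\Omega^1_k$ and $k/k^p$ regimes.

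Finally, to extract the structure of $D$: because the leading symbol is nontrivial, $\omega=[a,b)\neq 0$ in $\textup{Br}(F)$, so by the period-index equality for $p$-symbol algebras $D\cong[a,b)$ is a division algebra of degree $p$. For the residue algebra, extend the valuation of $F$ to $D$ and analyze the Artin--Schreier generator $x$ with $x^p-x=a$ together with the Kummer-type generator $y$ with $y^p=b$. In case (i), setting $u=x\pi^m\in D$ gives $u^p-u\pi^{(p-1)m}=f$, and reducing modulo the valuation ideal of $D$ yields $\bar{u}^p=\bar{f}$ in $\Delta$; since $\bar{f}\notin k^p$, the element $\bar{u}$ generates the purely inseparable degree-$p$ extension $k(\bar{f}^{1/p})$ of $k$. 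Case (ii) is parallel, with $y$ a unit and $\bar{y}^p=\bar{g}$ generating $k(\bar{g}^{1/p})$. Combining with Lemma \ref{structureofrda} (noting that a purely inseparable extension of $k$ has trivial Brauer group, forcing $f=1$ in the notation of Section \ref{cdf}) identifies $\Delta$ with a purely inseparable degree-$p$ extension of $k$. The main obstacle is the iterative absorption in the third paragraph: bookkeeping how each perturbation lowers the Swan conductor of the remainder, and handling the switches between the $p\mid j$ and $p\nmid j$ regimes, is delicate and constitutes the technical core of Kato's argument.
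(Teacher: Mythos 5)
Your proposal follows essentially the same route as the paper's own argument (given in detail for the henselian generalization, Theorem~\ref{henselkato}): the collapse of the graded pieces $M_j/M_{j-1}$ forced by $[k:k^p]=p$ (so $\Omega^1_k/Z^1_k=0$), a descending induction that absorbs each lower filtration level by perturbing the slots of a single symbol via Lemma~\ref{swift}, a separate treatment of the tame level $M_0\cong \text{Br}(k)[p]\oplus k/\mathcal{P}(k)$, and finally reading off the purely inseparable residue algebra. The one place your sketch is vaguer than the paper is the alleged ``switching'' between the $d\pi/\pi$- and $dg/g$-slots: in the paper the shape of the symbol (case (i) versus case (ii)) is fixed once and for all by whether $p$ divides the leading Swan conductor, and all lower-order corrections are absorbed into modifications $f\mapsto f'$, $\pi\mapsto\pi'$ (resp.\ $c\mapsto c'$, $g\mapsto g'$) within that same shape, so the bookkeeping you defer is simpler than you anticipate.
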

In both case, $D$ is decomposed by a totally ramified field extension of degree $p$ and a field extension of degree $p$ whose residue field is a purely inseparable extension.

We notice that Kato's proof can be generalized to the henselian case easily. Hence, we put the generalized result below with proof: 

\begin{theorem}
\label{henselkato}
Let $F$ be a henselian field of characteristic $p$ with a discrete valuation $v$ and residue field $k$. Suppose that $[k:k^p]=p$. Suppose that $\omega \in \textup{Br}(F)[p]$ and $\omega \notin \text{Br}(F_{\text{tame}}/F)[p]$. Then the division algebra $D$ which represents $\omega$ is a degree $p$ divison algebra with inseparable residue field extension.  \par
Moreover, let $\pi$ be a prime element of $F$. Then $D=[a,b)$ for some $a\in F,b\in F^\times$ and it has one of the following two forms:
\begin{align*}
&\romannumeral 1)\ [\dfrac{f}{\pi^{pm}},e\pi), \tx{where}\ f\in \mathcal{O}_F,\ \Bar{f}\notin k^p,m>0,v(e)=0.
\\
&\romannumeral 2)\ [\dfrac{c}{\pi^n},g), \tx{where}\ g\in \mathcal{O}_F,\ \Bar{g}\notin k^p,v(c)=0 \tx{and}\ n \tx{is prime to $p$}.
\end{align*}
\end{theorem}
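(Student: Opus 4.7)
The plan is to adapt Kato's proof \cite{MR550688} of Proposition \ref{wildram} to the henselian case, observing that all the structural inputs --- the filtration $\{M_i\}$ on $\text{Br}(F)[p]$, the identification of $M_j/M_{j-1}$ in Theorem \ref{sbr}, and Lemma \ref{swift} --- are stated and valid over any discretely valued field.

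First I would fix a uniformizer $\pi \in \mathcal{O}_F$ and set $n := \text{sw}(\omega)$; the hypothesis $\omega \notin \text{Br}(F_{\text{tame}}/F)[p]$ means $n \geq 1$. By Theorem \ref{sbr}, the leading class $\bar\omega$ of $\omega$ in $M_n/M_{n-1}$ lies in a group isomorphic to $\Omega^1_k$ when $p \nmid n$ and to $\Omega^1_k/Z^1_k \oplus k/k^p$ when $p \mid n$. The assumption $[k:k^p]=p$ forces both $\Omega^1_k$ and $k/k^p$ to be one-dimensional over $k$ and $k^p$ respectively, generated by the classes of $d\bar g/\bar g$ and $\bar f$ for any $\bar g, \bar f \in k \setminus k^p$. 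I would use this to extract from $\bar\omega$ an initial candidate symbol $\omega_0$ of the requested shape: either $[c/\pi^n, g)$ with $\bar g \notin k^p$ (case $p \nmid n$), or $[f/\pi^{pm}, e\pi)$ with $\bar f \notin k^p$ (case $p \mid n = pm$), chosen so that $\omega - \omega_0 \in M_{n-1}$.

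The technical heart of the argument is to upgrade this leading-order agreement to an exact equality $\omega = [a,b)$. Using Lemma \ref{swift}, a perturbation $a \mapsto a + a'$ or $b \mapsto b(1+b')$ with controlled valuation modifies $[a,b)$ in a predictable way within the filtration, so Kato's argument builds an iterative sequence of corrections converging, in the complete case, to a genuine pair $(a,b)$. In the henselian setting, the same iteration instead defines an algebraic system whose reductions modulo successive powers of $\pi$ are solvable in $\mathcal{O}_F$; because the equations at each step are Artin--Schreier (for the Witt-vector entry) and Kummer-type (for $y^p=b$), Hensel's lemma supplies actual solutions $a \in F$ and $b \in F^\times$ of the desired form. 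A cleaner variant is to apply Kato's theorem directly over the completion $\hat F$ and descend the resulting symbol via Krasner's lemma, using that every finite separable extension of $\hat F$ arises from one of $F$ together with the injectivity of $\text{Br}(F)[p] \to \text{Br}(\hat F)[p]$ for henselian $F$. This descent from the formal/complete construction to a genuine representative in $F$ is the main obstacle; everything else transcribes from \cite{MR550688} verbatim.

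Finally, a degree-$p$ symbol algebra over a field is either split or a division algebra, so the nontriviality of $\omega$ immediately implies that $D := [a,b)$ is a division algebra of degree $p$. Reading off the ramification from the explicit form of $(a,b)$ in either case (i) or (ii), together with Lemma \ref{ed} and Corollary \ref{artin nonzero}, exhibits $D$ as decomposed by a totally ramified cyclic degree-$p$ extension of $F$ and a degree-$p$ extension whose residue field extension is purely inseparable; consequently the residue algebra of $D$ is a purely inseparable degree-$p$ field extension of $k$, as asserted.
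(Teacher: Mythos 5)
Your overall strategy---fix a uniformizer, read off the leading term of $\omega$ in $M_n/M_{n-1}$ using Theorem \ref{sbr} and $[k:k^p]=p$, then correct the resulting candidate symbol down the filtration using Lemma \ref{swift}---is exactly the paper's. But there is a genuine gap at the step you yourself flag as ``the main obstacle'': you describe the correction process as an infinite iteration that ``converges in the complete case,'' and then try to repair the lack of completeness either by Hensel's lemma applied to an unspecified ``algebraic system'' or by working over $\hat F$ and descending via Krasner's lemma. Neither repair is carried out: the first is not a concrete argument (it is not stated what equations are being solved, and the data being adjusted are a class in $W(F)/\mathcal{P}$ and a slot in $F^\times$, not roots of a polynomial to which Hensel's lemma applies); the second requires the injectivity of $\textup{Br}(F)[p]\to\textup{Br}(\hat F)[p]$ and a quantitative approximation statement saying when $[a,b)=[\hat a,\hat b)$ over $\hat F$, neither of which you establish, and Krasner's lemma is in any case the wrong tool for the purely inseparable slot $y^p=b$.

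The point you are missing is that no such obstacle exists, which is precisely why the paper can transport Kato's proof to the henselian setting. The induction is \emph{finite}: starting from $\omega\equiv\omega_0 \bmod M_{n-1}$ with $n=\textup{sw}(\omega)$, each step replaces $(f,\pi)$ by $(f',\pi(1+h\pi^{pm-i}))$ (resp.\ $(c,g)$ by $(c',g(1+e\pi^{n-i}))$) with $h,e\in\mathcal{O}_F$ chosen so that the class drops from $M_i$ to $M_{i-1}$; the existence of such $h,e$ uses only Lemma \ref{swift} and the fact that $M_i/M_{i-1}$ is a cyclic module over $k$ (here $[k:k^p]=p$ forces $\Omega^1_k/Z^1_k=0$, so the graded pieces are $\Omega^1_k\cong k\,d\bar g$ or $k/k^p$). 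After at most $n$ steps one lands at $i=0$, all corrections having been performed with elements of $F$ itself, so no limit and hence no completeness is ever needed. Henselianness enters only once, at the bottom of the induction, to identify $M_0\cong\textup{Br}(k)[p]\oplus k/\mathcal{P}(k)$ (Theorem \ref{sbr}), after which the residual unramified part is absorbed by one last application of Lemma \ref{swift} using $\bar f\notin k^p$. To repair your write-up you should replace the ``convergence plus descent'' discussion by this finite descending induction, stated with the explicit valuation bounds $v(f'-f)\geq pm-i$ and $\pi'/\pi\in U_F^{(pm-i)}$ that make each correction legitimate.
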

Before the proof, we need the following lemma utilizing the condition $[k:k^p]=p$.
\begin{lemma}
$\Omega^1_k/Z_k^1=0$
when $[k:k^p]=p$.
\end{lemma}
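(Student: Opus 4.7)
The plan is to identify $Z_k^1$ as the kernel of the exterior derivative $d\colon \Omega^1_k \to \Omega^2_k$ and then show that under the hypothesis $[k:k^p]=p$ this map is automatically zero because its target vanishes.

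First I would recall that for a field $k$ of characteristic $p>0$, the $p$-rank of $k$ coincides with the dimension $\dim_k \Omega^1_k$: if $\{x_i\}_{i=1}^n$ is a $p$-basis of $k$, then $\{dx_i\}_{i=1}^n$ is a $k$-basis of $\Omega^1_k$ (a standard fact about Kähler differentials over a $p$-basis). Under the hypothesis $[k:k^p]=p$, this means $\Omega^1_k$ is a $1$-dimensional $k$-vector space, spanned by $dx$ for any $p$-basis element $x$.

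Next I would use the definition $\Omega^n_k = \bigwedge^n_k \Omega^1_k$. Since $\Omega^1_k$ is $1$-dimensional, every higher exterior power vanishes; in particular $\Omega^2_k = 0$. Consequently the differential
\[
d\colon \Omega^1_k \longrightarrow \Omega^2_k = 0
\]
is the zero map, so every element of $\Omega^1_k$ is closed. Thus $Z^1_k = \Omega^1_k$ and $\Omega^1_k/Z^1_k = 0$, as claimed.

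There is no real obstacle here; the only thing to be careful about is citing or invoking the fact that $dx$ generates $\Omega^1_k$ when $x$ is a $p$-basis of $k$, which already appeared implicitly in the discussion of $p$-bases earlier. The statement is essentially a dimension count combined with the definition of closed forms.
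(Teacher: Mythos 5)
Your proof is correct and follows essentially the same route as the paper: the paper's proof simply observes that $[k:k^p]=p$ forces all $2$-forms to vanish, so every $1$-form is closed; you have spelled out the same dimension count ($\Omega^1_k$ is $1$-dimensional over $k$, hence $\Omega^2_k=\bigwedge^2\Omega^1_k=0$) in slightly more detail.
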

\begin{proof}
Since $[k:k^p]=p$, it follows that there is no nontrivial $2$-form over $k$. Therefore, every $1$-form is closed and $\Omega^1_k/Z_k^1=0$.
\end{proof}

So in the case when $p |n$ and $[k:k^p] = p,$ we have $M_n/M_{n-1}\cong k/k^p$, This follows from Theorem \ref{sbr}. We use the notation $U_F^{(n)}:=\{x\in F^\times\mid v_F(x-1)\geq n\}$ for each integer $n\geq 0$.

\begin{proof}[Proof of Theorem \ref{henselkato}]
$ $\\
The proof follows from the following two steps by induction on $i$:

(\romannumeral 1) Hypotheses: \begin{align*}& \omega\in \text{Br}(F)[p],\\
&\omega\equiv[\frac{f}{\pi^{pm}}\dlog{\pi}]\ \text{mod} \ M_i,\\
& f\in \mathcal{O}_F,\ \bar{f}\notin k^p,\ pm>i\geq 0,\\
&\pi \ \text{is a prime element of}\  F.
\end{align*}

Conclusion:
\begin{align*}
    &\text{There exist $f'$ and $\pi'$ such that}\\
    &\omega\equiv [\frac{f'}{\pi^{pm}}\dlog{\pi'}]\ \text{mod} \ M_{i-1}\\
    &v(f'-f)\geq pm-i,\pi'/\pi\in U_F^{(pm-i)}.
\end{align*}
(\romannumeral 2) Hypotheses: \begin{align*}& \omega\in \text{Br}(F)[p],\\
&\omega\equiv[\frac{c}{\pi^{n}}\dlog{g}]\ \text{mod} \ M_i,\\
&g\in \mathcal{O}_F,\ \Bar{g}\notin k^p,v(c)=0, n\geq i\geq 0\\
&\pi \ \text{is a prime element of}\  F.
\end{align*}

Conclusion:
\begin{align*}
    &\text{There exist $c'$ and $g'$ such that}\\
    &\omega\equiv [\frac{c'}{\pi^{n}}\dlog{g'}]\ \text{mod} \ M_{i-1}\\
    &v(c'-c)> n-i,g'/g\in U_F^{(n-i)}.
\end{align*}
We prove both of these simultaneously in two cases: $i > 0$ and $i = 0$.
\\
(1) $\bm{i>0}:$
For (\romannumeral 1), if $p|i$, the conclusion is clear since $M_i/M_{i-1}\cong k/k^p$ by fixing the uniformizer $\pi$. If $p\nmid i$, for any $g\in \mathcal{O}_F$, by Lemma \ref{swift},
\begin{align*}
    \frac{f}{\pi^{pm}}\dlog{1+h\pi^{pm-i}}&\equiv -\frac{fh\pi^{pm-i}}{\pi^{pm}}\dlog{\frac{f}{\pi^{pm}}}\\
    &= -\frac{fh}{\pi^i}\dlog{f} \ \text{mod}\ M_{pm-2(pm-i)}.
\end{align*}
Since $[k:k^p]=p$, we can find $h\in \mathcal{O}_F$ such that  $\omega-[\dfrac{f}{\pi^{pm}}\dlog{\pi(1+h\pi^{pm-i})}]\in M_{i-1}$. Hence the conclusion follows.

Then let us look at (\romannumeral 2). If $p\nmid i$, the conclusion follows since the $p$-rank of the residue field $k$ is $1$ and $M_i/M_{i-1}\simeq \Omega^1_k$ by fixing the uniformizer $\pi$. If $p\mid i$, then for any $e\in \mathcal{O}_F$, by Lemma \ref{swift} we have
\begin{align*}
    \frac{c}{\pi^{n}}\dlog{1+e\pi^{n-i}}&\equiv -\frac{ce\pi^{n-i}}{\pi^{n}}\dlog{\frac{c}{\pi^{n}}}\\
    &= -\underbrace{\frac{ce}{\pi^i}\dlog{c}}_{Sw(*)<i}-\frac{nce}{\pi^i}\dlog{\pi} \\
    &=-\frac{nce}{\pi^i}\dlog{\pi} \ \text{mod}\ M_{i-1}.
\end{align*}
The $(*)$ term has a Swan conductor smaller than $i$, since its residue corresponds to a term in $\Omega^1_k/Z_k^1\cong 0$. Now we can find $e\in \mathcal{O}_F$ such that $\omega-\dfrac{c}{\pi^n}\dlog{g(1+e\pi^{n-i})}\in M_{i-1}$. Hence the conclusion follows in this case.
\\
(2) $\bm{i=0}$:
First fix the uniformizer $\pi$. By Theorem \ref{sbr}, we have that \[
M_0\cong \text{Br}(k)[p]\oplus k/\mathcal{P}(k).
\]
For both hypotheses, the proof proceeds in two steps: First, we modify the condition in each hypothesis so that the resulting symbol algebra is congruent to $\omega$ modulo $\text{Br}(k)[p]$. In the second step, we finish the proof. We give details for $(\romannumeral 1)$ as an example.

Since $\omega-[\dfrac{f}{\pi^{pm}}\dlog{\pi}]\in M_0$, then there exists $f_1\in \mathcal{O}_F$ such that $\omega-[\dfrac{f+f_1\pi^{pm}}{\pi^{pm}}\dlog{\pi}]\in \text{Br}(k)[p]$. Set $f'=f+f_1\pi^{pm}$. For any $h\in \mathcal{O}_F$, by Lemma \ref{swift},
\begin{align*}
    \frac{f'}{\pi^{pm}}\dlog{1+h\pi^{pm}}&\equiv -f'h\dlog{\frac{f'}{\pi^{pm}}}\\
    &= -f'h\dlog{f'} \ \text{in}\ \text{Br}(F)[p].
\end{align*}
We can find $h\in \mathcal{O}_F$ such that $\omega-[\dfrac{f+f_1\pi^{pm}}{\pi^{pm}}\dlog{\pi(1+h\pi^{pm})}]\in \text{Br}(k)[p]$. Therefore the conclusion follows.

\end{proof}

Moreover, if $\text{Br}(k)=0$, we have the following:
\begin{theorem}
\label{sc1}
Let $F$ be a henselian field of characteristic $p$ with a discrete valuation $v$ and residue field $k$. Suppose that $[k:k^p]=p$ and $\text{Br}(k)[p]=0$.
For any $\omega\in \text{Br}(k)[p]$, the division algebra $D$ which represents $\omega$ is a symbol algebra of degree $p$.

Moreover, let $\pi$ be a prime element of $F$. The symbol algebra $D=[a,b),a\in F,b\in F^\times $ has one of the following three forms:
\label{structure}
\begin{align*}
&\romannumeral 1)\ [a,\pi), \tx{where}\ a\in \mathcal{O}_F,\ \Bar{a}\notin \mathcal{P}(k).
\\
&\romannumeral 2)\ [\dfrac{f}{\pi^{pm}},e\pi), \tx{where}\ f\in \mathcal{O}_F,\ \Bar{f}\notin k^p,m>0,v(e)=0.
\\
&\romannumeral 3)\ [\dfrac{c}{\pi^n},g), \tx{where}\ g\in \mathcal{O}_F,\ \bar{g}\notin k^p,v(c)=0 \tx{and}\ p\nmid n.
\end{align*}
\end{theorem}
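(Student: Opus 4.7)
The plan is to split into the tame and wild cases with respect to the valuation $v$ and handle each separately, noting that Theorem \ref{henselkato} already disposes of the wild case.

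First I would consider the \emph{tame} case, where $\omega \in M_0 = \textup{Br}_{\textup{tame}}(F)[p]$. By Theorem \ref{sbr}, because $F$ is henselian with respect to $v$, there is an exact sequence
\[
0\to \textup{Br}_{\textup{nr}}(F)[p]\to \textup{Br}_{\textup{tame}}(F)[p]\xrightarrow{\partial_v} H^1(k,\mathbb{Z}/p)\to 0,
\]
with $\textup{Br}_{\textup{nr}}(F)[p]\cong \textup{Br}(k)[p]$. Our hypothesis $\textup{Br}(k)[p]=0$ therefore forces $\partial_v$ to be an isomorphism. Under the Artin--Schreier identification $H^1(k,\mathbb{Z}/p)\cong k/\mathcal{P}(k)$, a direct computation from the definition of $\partial_v$ shows that the symbol algebra $[a,\pi)$, viewed via $\delta_1$ as the class of $a\,\dlog{\pi}\in M_0$, maps to the residue class $\bar{a}\in k/\mathcal{P}(k)$. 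Hence given any nonzero tame $\omega$, pick $a\in \mathcal{O}_F$ lifting $\partial_v(\omega)\in k/\mathcal{P}(k)$; then $\omega-[a,\pi)$ is unramified and so vanishes, giving $\omega=[a,\pi)$ with $\bar{a}\notin \mathcal{P}(k)$, which is exactly form (i).

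For the \emph{wild} case, $\omega\notin \textup{Br}(F_{\textup{tame}}/F)[p]$, I invoke Theorem \ref{henselkato} directly: it asserts both that the representing division algebra has degree $p$ and that $\omega$ can be written as a symbol algebra in one of the forms (ii) or (iii).

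In all cases the resulting symbol algebra has degree $p$; since a degree-$p$ central simple algebra is either split or a division algebra, it represents the division algebra $D$ of $\omega$ whenever $\omega\neq 0$. Combining the tame and wild analyses yields the three forms in the statement. The only real content is the tame case above, and the main point to check carefully is the compatibility between the residue map $\partial_v$ of Theorem \ref{totarogeneral} and the Artin--Schreier computation of $\partial_v([a,\pi))=\bar{a}$; this is an unwinding of definitions rather than a true obstacle, so no step requires serious new work beyond what Theorem \ref{henselkato} already supplies.
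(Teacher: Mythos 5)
Your proposal is correct and follows essentially the same route as the paper, whose proof is simply the observation that the statement is a corollary of Theorem \ref{sbr} (for the tame case, where $\textup{Br}(k)[p]=0$ forces $\partial_v$ to be an isomorphism and yields form (i)) and Theorem \ref{henselkato} (for the wild case, yielding forms (ii) and (iii)). Your write-up just makes explicit the unwinding of $\partial_v([a,\pi))=\bar{a}$ that the paper leaves implicit.
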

\begin{proof}
This theorem is just a corollary of Theorem \ref{sbr} and Theorem \ref{henselkato}.
\end{proof}
\begin{remark}
\label{nrrf}
   In each case, the symbol algebra $D$ over $F$ is split by a totally ramified field extension of degree $p$ and a field extension of degree $p$ whose residue field is a purely inseparable extension or an Artin-Schreier extension. 
\end{remark}
The examples of residue fields satisfying the condition in Theorem \ref{sc1} are as follows: $k(C)$, $k((s))$, $k'((t))_{\textup{nr}}$, where $k$ is an algebraically closed field with characteristic $p>0$, $k'$ is a perfect field with characteristic $p>0$ and $k(C)$ denotes the function field of an algebraic curve $C$ over $k$.

\section{Period-index Problems of Henselian Discretely Valued fields}

Let $F$ be the henselian field with a discrete valuation $v$. Let $\mathcal{O}_F$ be the valuation ring $\{x\in F: v(x)\geq 0\}$, and let $k\in \mathcal{O}_F/\mathcal{m}$ be the residue field.

\begin{theorem}[ \cite{MR1626092}*{Proposition 2.1},  \cite{MR3413868}*{Proposition 6.1}]
Suppose that a field $F$ and all its finite extensions $E$, have the property that for all central simple $A/E$ of period $p$ satisfies $\text{ind}(A)\leq p^m$. Then, any $A/F$ of period $p^n$ satisfies $\text{ind}(A)\leq p^{mn}$. 
\label{takao}
\end{theorem}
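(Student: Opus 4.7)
The plan is to prove this by induction on $n$. The base case $n=1$ is exactly the hypothesis applied to $F$ itself, which gives $\textup{ind}(A) \leq p^m = p^{m\cdot 1}$ for any $A/F$ of period $p$. For the inductive step, I assume the conclusion holds for $n-1$ (and moreover, implicitly, over every finite extension of $F$, which will be needed below). Let $A/F$ be a central simple algebra of period $p^n$.

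First, I would peel off the top layer of the period. Consider the Brauer class $[B] := p^{n-1}\cdot[A] \in \textup{Br}(F)$. Since $p\cdot[B] = p^n\cdot[A]=0$, the class $[B]$ has period dividing $p$, so by the hypothesis applied to $F$ we have $\textup{ind}(B)\leq p^m$. Taking a maximal subfield of the division algebra representing $[B]$ produces a finite splitting field $E/F$ of $[B]$ with $[E:F]\leq p^m$.

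Next, pass to $E$. The class $[A_E]$ satisfies $p^{n-1}\cdot[A_E]=[B_E]=0$, so $A_E$ has period dividing $p^{n-1}$. Crucially, the hypothesis is inherited by any finite extension of $F$: any finite extension of $E$ is a finite extension of $F$, so by assumption every period-$p$ central simple algebra over it has index at most $p^m$. Hence the inductive hypothesis applies over $E$, and yields $\textup{ind}(A_E)\leq p^{m(n-1)}$. Combining this with the standard divisibility $\textup{ind}(A)\mid [E:F]\cdot\textup{ind}(A_E)$, we obtain
\[
\textup{ind}(A) \;\leq\; [E:F]\cdot\textup{ind}(A_E) \;\leq\; p^m\cdot p^{m(n-1)} \;=\; p^{mn},
\]
completing the induction.

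There is no serious obstacle here: the argument is a clean period-versus-index induction entirely internal to the Brauer group, and does not use the characteristic $p$ setup at all. The only subtlety worth emphasizing is that the hypothesis is assumed over \emph{all} finite extensions of $F$ precisely so that the inductive step can be invoked over the auxiliary extension $E$ used to reduce the period; without this stability under finite extensions, the induction cannot close.
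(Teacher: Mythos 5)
Your proof is correct. The paper does not actually prove this statement—it is quoted with citations to Yamazaki and Matzri—and your induction (splitting the period-$p$ class $p^{n-1}[A]$ by a maximal subfield $E$ of degree at most $p^m$, then applying the inductive hypothesis over $E$, which is legitimate because finite extensions of $E$ are finite extensions of $F$, and concluding via $\text{ind}(A)\mid [E:F]\cdot\text{ind}(A_E)$) is precisely the standard argument given in those sources.
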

The theorem says that the Brauer dimension at $p$ of a field $F$ satisfying the condition are controlled by the $p$-torsion Brauer classes over its finite extensions.

\begin{remark}
We apply Theorem \ref{takao} to classes of fields containing finite extensions. The well-known examples are the class of $C_m$ fields and the class of function fields of algebraic varieties of a given dimension.
\end{remark}
\begin{prop}
Suppose that $F$ is a henselian discretely valued field with the residue field $k$ of characteristic $p>0$ and $[k:k^p]=p^n$. Let $E$ be a finite extension of $F$. Then $E$ is also a henselian discretely valued field with the residue field $l$ and $[l:l^p]=p^n$.
\label{inse}
\end{prop}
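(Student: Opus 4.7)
The plan is to split the proposition into two standard parts: first, that the henselian discrete valuation structure passes through a finite extension, and second, that the $p$-rank of the residue field is preserved. Only the second requires a short explicit calculation.

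For the first part, I would appeal to the classical fact that if $F$ is a henselian discretely valued field and $E/F$ is a finite extension, then the valuation $v$ on $F$ extends uniquely to a valuation $w$ on $E$, the value group of $w$ is of the form $\tfrac{1}{e}v(F^\times)$ for the ramification index $e$ (hence still isomorphic to $\mathbb{Z}$), and $E$ is henselian with respect to $w$; moreover the residue field $l$ of $E$ is a finite extension of $k$ with $[l:k]\le [E:F]$. These are standard results in valuation theory, and no new idea is needed here.

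For the second part, I would prove the stronger statement that for \emph{any} finite extension $l/k$ of fields of characteristic $p$, one has $[l:l^p]=[k:k^p]$. The argument is a double degree computation in the lattice of fields
\[
k^p\subset k\subset l,\qquad k^p\subset l^p\subset l.
\]
Multiplicativity of degrees gives
\[
[l:k^p]=[l:k]\,[k:k^p]=[l:l^p]\,[l^p:k^p].
\]
The Frobenius $x\mapsto x^p$ is a field isomorphism of $l$ onto $l^p$ carrying $k$ onto $k^p$, so it induces an isomorphism of extensions $(l/k)\cong (l^p/k^p)$ and in particular $[l^p:k^p]=[l:k]$. Cancelling $[l:k]$ from the identity above yields $[l:l^p]=[k:k^p]=p^n$, as required.

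There is no genuine obstacle here: the separable case is already covered by Lemma \ref{extofprank}, and the computation above gives a uniform argument that simultaneously handles the purely inseparable part of the decomposition of $l/k$. Combining the two parts proves the proposition, and it provides exactly the stability property needed to apply Theorem \ref{takao} to the setting of Theorem \ref{firstthe}.
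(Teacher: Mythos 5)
Your argument is correct, but it is organized differently from the paper's. The paper proves Proposition \ref{inse} by factoring $E/F$ into a finite separable extension followed by purely inseparable simple extensions, and then tracks the henselian DVR structure and the residue field through each step by hand: the separable case is handled by a Stacks-project tag together with Lemma \ref{extofprank}, and the purely inseparable simple case by the regular-local-ring Lemmas \ref{rlrins1} and \ref{rlrins2}, which identify the residue field of $\mathcal{O}_F[\alpha]$ explicitly as $\kappa(\bar u^{1/n})$ or $\kappa$. You instead treat the structural part uniformly, citing the standard theory of henselian valued fields (unique extension of the valuation, discreteness of the extended value group, henselianity and finiteness of the residue extension via the fundamental inequality $ef\le [E:F]$), and then dispose of the $p$-rank in one stroke by proving that $[l:l^p]=[k:k^p]$ for \emph{every} finite extension $l/k$ via the Frobenius degree count
\[
[l:k]\,[k:k^p]=[l:k^p]=[l:l^p]\,[l^p:k^p]=[l:l^p]\,[l:k].
\]
That computation is a correct self-contained proof of the ``finite'' case of Lemma \ref{extofprank} (the Bourbaki reference the paper cites), so you avoid both the separable/inseparable case division for the residue fields and the explicit regular-local-ring bookkeeping. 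What the paper's route buys is an explicit description of the residue field in the purely inseparable case, which is reused elsewhere; what your route buys is brevity and the observation that the $p$-rank statement needs nothing about the valuation at all, only finiteness of $l/k$. Both are complete proofs.
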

\begin{proof}
We reduce to either case of a finite separable extension case or a purely inseparable simple extension case. When $E/F$ is finite separable, the statement follows from Lemma \ref{extofprank} and  \cite{stacks-project}*{\href{https://stacks.math.columbia.edu/tag/09E8}{Remark 09E8}}. When $E/F$ is a purely inseparable simple extension, the statement follows from \cite{stacks-project}*{\href{https://stacks.math.columbia.edu/tag/04GH}{Lemma 04GH}} and lemmas \ref{rlrins1}, \ref{rlrins2} below.
\end{proof}

\begin{lemma}[\cite{MR3219517}*{Lemma 3.1}]
\label{rlrins1}
    Let $B$ be a regular local ring with field of fractions $K$, residue field
$\kappa$ and maximal ideal $m$. Let $n$ be a natural number and $u\in B$ a unit such that
$[\kappa(u^{\frac{1}{n}}
) : \kappa] = n$. Then $B[u^{\frac{1}{n}}]$ is a regular local ring with residue field $\kappa(
\bar{u}^{\frac{1}{n}})$.
\end{lemma}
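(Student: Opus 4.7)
The plan is to set $C := B[u^{1/n}] = B[x]/(x^n - u)$, which is a free $B$-module of rank $n$ (with basis $1, x, \ldots, x^{n-1}$), and to verify the three assertions — that $C$ is local, with residue field $\kappa(\bar{u}^{1/n})$, and regular — in quick succession by pushing everything through the fiber over the closed point.

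First I would analyse the closed fiber $C/mC \cong \kappa[x]/(x^n - \bar{u})$. The hypothesis $[\kappa(\bar{u}^{1/n}):\kappa] = n$ says that $\bar{u}^{1/n}$ has degree exactly $n$ over $\kappa$, so its minimal polynomial has degree $n$; since this minimal polynomial must divide the monic degree-$n$ polynomial $x^n - \bar{u}$, the two polynomials agree. Hence $x^n - \bar{u}$ is irreducible over $\kappa$, which makes $C/mC$ a field canonically isomorphic to $\kappa(\bar{u}^{1/n})$.

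Next I would establish that $C$ is local. The map $B \to C$ is finite, hence integral, so every maximal ideal of $C$ contracts to the unique maximal ideal $m$ of $B$; that is, every maximal ideal of $C$ contains $mC$. Maximal ideals of $C$ containing $mC$ correspond to maximal ideals of $C/mC$, and since the latter is already a field by the previous paragraph, $mC$ itself is the unique maximal ideal of $C$. Thus $C$ is local with maximal ideal $\mathfrak{M} = mC$ and residue field $\kappa(\bar{u}^{1/n})$, giving the first two claims simultaneously.

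Finally I would deduce regularity. Let $d = \dim B$; since $B$ is a regular local ring, $m$ is generated by $d$ elements $t_1,\ldots, t_d$, whence $\mathfrak{M} = mC$ is also generated by (the images of) $t_1,\ldots,t_d$. Because $B \hookrightarrow C$ is a finite injective extension, $\dim C = \dim B = d$ by the usual dimension formula for integral extensions. Thus $C$ is a Noetherian local ring whose maximal ideal is generated by $\dim C$ elements, so $C$ is regular. There is no real obstacle in this argument — the hypothesis $[\kappa(\bar{u}^{1/n}):\kappa] = n$ does essentially all of the work by ensuring $x^n - \bar{u}$ is irreducible, and everything else is a direct application of standard facts about finite (hence integral) extensions of local rings.
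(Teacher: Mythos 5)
Your argument is correct, and there is nothing in the paper to compare it against: the lemma is quoted with a citation to Parimala--Suresh (Lemma 3.1 of the reference \cite{MR3219517}) and no proof is given here; your proof is essentially the standard one from that source. The one step you elide is the opening identification $B[u^{1/n}] = B[x]/(x^n-u)$ as a free $B$-module of rank $n$. A priori $B[u^{1/n}]$ (the subring of $\overline{K}$ generated by $B$ and a chosen root) is only a quotient of $B[x]/(x^n-u)$, and freeness of rank $n$ amounts to $x^n-u$ being irreducible over $K$ --- which is not part of the hypothesis, as the hypothesis only concerns the residue field. It does follow: a monic factorization of $x^n-u$ over $K$ descends to $B$ because $B$ is regular local, hence a UFD, hence integrally closed, and reducing modulo $m$ would then factor $x^n-\bar{u}$, contradicting the irreducibility you established from $[\kappa(\bar{u}^{1/n}):\kappa]=n$. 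With that line added, the rest is sound: the closed fiber is a field, so by integrality and lying over $mC$ is the unique maximal ideal, and since $mC$ is generated by $\dim B = \dim C$ elements, $C$ is regular.
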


\begin{lemma}[\cite{MR3219517}*{Lemma 3.2}]
\label{rlrins2}
    Let $B$ be a regular local ring with field of fractions $K$, residue field
$\kappa$ and maximal ideal $m$. Let $\pi\in m$ be a regular prime and $n$ a natural number. Then $B[\pi^{\frac{1}{n}}]$ is a regular local ring with residue field $\kappa$.
\end{lemma}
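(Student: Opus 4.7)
The plan is to realize $B' := B[\pi^{1/n}]$ as the quotient $B[x]/(x^n - \pi)$ and to verify in turn that it is (i) an integral domain, (ii) a local ring with residue field $\kappa$, and (iii) regular of the correct Krull dimension.

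First I would establish the polynomial presentation. Because $\pi$ is a regular prime, the height-one prime $(\pi) \subset B$ makes $B_{(\pi)}$ a discrete valuation ring with uniformizer $\pi$. Hence $x^n - \pi$ is Eisenstein over $B_{(\pi)}$, so it is irreducible in $K[x]$. Consequently $B' = B[x]/(x^n - \pi)$ is an integral domain, and from the presentation it is a free $B$-module of rank $n$ with basis $1, x, \ldots, x^{n-1}$.

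Next, to see that $B'$ is local with residue field $\kappa$, I compute
\[
B' \otimes_B \kappa \;=\; \kappa[x]/(x^n - \bar\pi) \;=\; \kappa[x]/(x^n),
\]
since $\bar\pi = 0$ in $\kappa$. This is a local artinian ring with unique maximal ideal $(\bar x)$ and residue field $\kappa$. Because $B \to B'$ is integral, primes of $B'$ lying over $m$ correspond to primes of $B'/mB'$, so $B'$ has a unique maximal ideal $M$. Extending $\pi = t_1, t_2, \ldots, t_d$ to a regular system of parameters of $B$, one checks that $M = (x, t_2, \ldots, t_d)$ (since $\pi = x^n \in (x)$), and that $B'/M = B/(t_1, \ldots, t_d) = \kappa$.

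Finally, for regularity, note that $\dim B' = \dim B = d$ because $B \hookrightarrow B'$ is a finite injection of Noetherian domains. The maximal ideal $M$ has $d$ generators $x, t_2, \ldots, t_d$, so $B'$ is a Noetherian local ring of dimension $d$ whose maximal ideal is generated by $d$ elements---hence regular. The main subtlety is the first step: one has to confirm that $x^n - \pi$ is genuinely irreducible so that $B'$ is a domain of the expected rank, and this is precisely where the hypothesis that $\pi$ is a \emph{regular} prime (equivalently, $\pi \notin m^2$, so that $B_{(\pi)}$ is a DVR with uniformizer $\pi$) is essential, via the Eisenstein criterion.
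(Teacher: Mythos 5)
The paper does not prove this lemma at all---it is quoted verbatim from Parimala--Suresh \cite{MR3219517}*{Lemma 3.2} with only the citation---so there is no internal proof to compare against; your argument is correct and is essentially the proof given in that reference (Eisenstein presentation $B[x]/(x^n-\pi)$, unique maximal ideal $(x,t_2,\dots,t_d)$ lying over $m$, then a dimension count). One small correction to your closing remark: the condition $\pi\notin m^2$ is not what makes $B_{(\pi)}$ a DVR with uniformizer $\pi$ (that holds for any height-one prime of a regular, hence normal, local ring, and the Eisenstein step needs only that); where the ``regular'' in ``regular prime'' is genuinely used is in extending $\pi=t_1$ to a regular system of parameters, i.e.\ in the regularity step, exactly as your proof body does.
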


Now we can give a proof for the first main result in the introduction. 
\begin{theorem}[Theorem \ref{firstthe}]
Let $F$ be a henselian discretely valued field of characteristic $p>0$ with the residue field $k$ such that $[k:k^p]=p$ and $\text{Br.dim}_p(l)=0$ for any finite extension $l/k$. Then we have $\text{Br.dim}_p(F)=1$.
\label{c1s}
\end{theorem}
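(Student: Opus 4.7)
The plan is to combine three ingredients developed in the preceding sections: the reduction from $p^n$-torsion to $p$-torsion classes over finite extensions (Theorem~\ref{takao}), the inheritance of the residue-field hypotheses under finite extensions (Proposition~\ref{inse}), and the Kato-type structure theorem asserting that every $p$-torsion Brauer class is represented by a symbol algebra of degree $p$ (Theorem~\ref{sc1}). The upper bound $\text{Br.dim}_p(F)\leq 1$ is the main content; the matching lower bound is extracted from the filtration on $\text{Br}(F)[p]$ coming from Kato's Swan conductor.

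For the upper bound I would apply Theorem~\ref{takao} with $m=1$. To verify its hypothesis, fix any finite extension $E/F$. By Proposition~\ref{inse}, $E$ is again henselian discretely valued, and its residue field $l$ satisfies $[l:l^p]=p$. Since $l/k$ is finite, the standing assumption $\text{Br.dim}_p(l')=0$ for all finite $l'/k$ applies to $l$, giving $\text{Br}(l)[p]=0$. Hence $E$ and $l$ meet the assumptions of Theorem~\ref{sc1}, so any $A\in \text{Br}(E)[p]$ is a symbol algebra of degree $p$ and therefore has index dividing $p$. Feeding this back into Theorem~\ref{takao} yields $\text{ind}(A)=\text{per}(A)$ for every $p$-primary Brauer class over $F$ (using that the index of a $p$-primary class is a $p$-power and that $\text{per}(A)\mid \text{ind}(A)$), which is exactly $\text{Br.dim}_p(F)\leq 1$.

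For the matching lower bound I only need a single nontrivial $p$-torsion Brauer class on $F$. Theorem~\ref{sbr} supplies one: its graded piece $M_1/M_0\cong \Omega^1_k$ is nonzero because $[k:k^p]=p>1$ forces $k$ to be non-perfect (any element $t$ of a $p$-basis of $k$ yields $dt\neq 0$), so $M_1\supsetneq M_0\supseteq 0$ and consequently $\text{Br}(F)[p]\neq 0$; this gives $\text{Br.dim}_p(F)\geq 1$. No single step stands out as a serious obstacle once the earlier machinery is in place; the most delicate point is the propagation of the $p$-rank and of the vanishing of the $p$-primary Brauer group across finite extensions, which Proposition~\ref{inse} handles by splitting into the separable and purely inseparable simple cases and invoking Lemmas~\ref{rlrins1} and~\ref{rlrins2}.
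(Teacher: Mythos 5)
Your proof is correct and follows essentially the same route as the paper: Theorem~\ref{sc1} gives period equals index for $p$-torsion classes over every finite extension (whose hypotheses are preserved by Proposition~\ref{inse}), and Theorem~\ref{takao} then bootstraps this to all $p$-primary classes. Your explicit verification of the lower bound via $M_1/M_0\cong\Omega^1_k\neq 0$ is a detail the paper leaves implicit, but it does not change the argument.
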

\begin{proof}
$ $\newline
By Theorem \ref{sc1}, we have $\text{ind}(\omega)=\text{per}(\omega)$ for $\omega \in \text{Br}(F)[p]$. By Theorem \ref{takao} and Proposition \ref{inse}, we have $\text{Br.dim}_p  (F)=1$. 
\end{proof}

The following theorem was first proved in \cite{MR4453883}. We give a different proof using ideas developed in this article. 
\begin{theorem}[\cite{MR4453883}*{Theorem 2.3}]
\label{Chipchakov1}
 Let $F$ be a henselian discretely valued field of characteristic $p>0$ with the residue field $k$. Suppose that $k$ is a local field. Then $\text{Br.dim} (F)=1$.
\end{theorem}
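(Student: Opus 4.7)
The plan is to handle the $p$-primary and prime-to-$p$ parts of $\textup{Br}(F)$ separately. The prime-to-$p$ part follows from classical results on henselian discretely valued fields with local residue fields, while the $p$-primary part is where the Kato Swan conductor machinery of Section 4 does the work.

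For primes $\ell\neq p$, every $\ell$-primary Brauer class over $F$ is tamely ramified, so the standard residue sequence
\[
0 \to \textup{Br}(k)\{\ell\} \to \textup{Br}(F)\{\ell\} \xrightarrow{\partial_v} H^1(k,\mathbb{Q}_\ell/\mathbb{Z}_\ell) \to 0
\]
applies and splits after choosing a uniformizer of $F$. Because $k$ is a local field of characteristic $p$ (so $\textup{Br.dim}(k)=1$) and every finite extension of $F$ remains henselian discretely valued with a local residue field, a standard symbol-algebra argument combines the unramified cyclic contribution from $\textup{Br}(k)\{\ell\}$ with the ramified cyclic contribution from $H^1(k,\mathbb{Q}_\ell/\mathbb{Z}_\ell)$ into a single cyclic algebra whose degree equals the period. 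This gives $\textup{Br.dim}_\ell(F)=1$.

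For the $p$-primary part, Theorem \ref{takao} together with Proposition \ref{inse} reduces the claim to the following: for every finite extension $E/F$ and every $\omega\in\textup{Br}(E)[p]$, $\textup{ind}(\omega)=p$. Here $E$ is again a henselian DVF with residue field $l$ a local field of characteristic $p$, so $[l:l^p]=p$ and Theorem \ref{sbr} applies. Filter $\textup{Br}(E)[p]=\bigcup_{j\geq 0} M_j$ by the Swan conductor. If $\textup{sw}(\omega)\geq 1$, then Theorem \ref{henselkato} asserts that $\omega$ is represented by a symbol algebra of degree $p$, giving $\textup{ind}(\omega)=p$ immediately. If $\textup{sw}(\omega)=0$, use the residue exact sequence
\[
0 \to \textup{Br}(l)[p] \to M_0 \xrightarrow{\partial_v} H^1(l,\mathbb{Z}/p) \to 0
\]
from Theorem \ref{sbr}. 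Since $l$ is a local field of characteristic $p$, $\textup{Br}(l)[p]\cong\mathbb{Z}/p$ is generated by a single cyclic algebra of degree $p$ that lifts by Hensel's lemma to an unramified degree-$p$ algebra over $E$, while each nonzero character in $H^1(l,\mathbb{Z}/p)$ produces an Artin--Schreier cyclic algebra $[a,\pi)$ of degree $p$ over $E$. Combining these two contributions via symbol identities --- for example writing the unramified part as $[b,\tilde{t})$ where $\tilde{t}\in\mathcal{O}_E^\times$ lifts a uniformizer of $l$ --- collapses $\omega$ into a single symbol of degree $p$, so $\textup{ind}(\omega)=p$.

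The principal obstacle is the tame case: a priori the sum of an unramified degree-$p$ class and an Artin--Schreier ramified degree-$p$ class has index only at most $p^2$. Collapsing this index down to $p$ rests on the cyclic structure of Brauer groups of local fields, and this is precisely where the hypothesis that $k$ is a local field (rather than merely satisfying the weaker residue conditions of Theorem \ref{firstthe}) becomes essential.
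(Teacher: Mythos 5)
Your overall architecture coincides with the paper's: reduce to $p$-torsion classes over finite extensions via Theorem \ref{takao} and Proposition \ref{inse}, dispose of the wildly ramified case ($\textup{sw}(\omega)\geq 1$) by Theorem \ref{henselkato}, and isolate the tame case as the place where the local-field hypothesis must enter. You also correctly flag the tame case as the principal obstacle. But that is exactly the step you do not actually carry out, and the mechanism you gesture at would not work as stated. Writing $\omega=[a,\pi)+[b,\tilde{t})$ with $\tilde{t}\in\mathcal{O}_E^\times$ lifting a uniformizer of $l$ leaves you with two symbols sharing neither slot, and bilinearity of symbols (which only combines $[a,x)+[a,y)=[a,xy)$ or $[x,b)+[y,b)=[x+y,b)$) cannot merge them. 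Saying the collapse ``rests on the cyclic structure of Brauer groups of local fields'' names the right hypothesis but not the right fact: cyclicity of $\textup{Br}(l)$ alone does not tell you the two symbols combine.

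The missing ingredient, which is the entire content of the paper's proof of the tame case, is the splitting property of local fields (Serre, \emph{Local Fields}, Corollary 3, p.~194): every extension of degree $n$ of a local field splits every Brauer class of index $n$. Apply it to the degree-$p$ residue extension $l_a/l$ underlying the unramified Artin--Schreier extension $E_a/E$ defined by $a$: the unramified class $[b,c)\in\textup{Br}(l)[p]$ is split by $l_a$, hence its lift is split by $E_a$, hence it can be rewritten as a cyclic algebra $[a,e)$ for the \emph{same} Artin--Schreier character $a$ and some $e\in E^\times$. Only then does bilinearity give $\omega=[a,\pi)+[a,e)=[a,\pi e)$, a single symbol of degree $p$. (Equivalently: restricting to $E_a$ kills $[a,\pi)$ by construction and kills the unramified part because restriction along a degree-$p$ extension of local fields multiplies the invariant by $p$.) The same omission recurs in your prime-to-$\ell$ paragraph, where ``a standard symbol-algebra argument combines'' the unramified and ramified cyclic pieces is again an assertion of precisely the step that needs the splitting property. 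Until this fact is invoked and applied, the bound you obtain in the tame case is only $\textup{ind}(\omega)\mid p^2$, which is not enough to conclude $\textup{Br.dim}(F)=1$.
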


\begin{proof}
Since $k$ is a local field of characteristic $p>0$, we have that $k\cong \mathbb{F}_q((s))$, $q=p^n$. By Theorem \ref{henselkato}, a wildly ramified Brauer class in $\text{Br}(F)[p]$ is represented by a symbol algebra of degree $p$. So it suffices to show that a tamely ramified Brauer class in $\text{Br}(F)$ has symbol length $1$. Let $\omega\in \text{Br} (F_{\text{tame}}/F)[p]$. Then $\omega=[a,\pi)+[b,c)$ where $a$ defines an unramified degree $p$ Artin-Schreier extension of $F$ and $[b,c)\in \text{Br}(k)[p]$. By \cite{MR554237}*{Corollary 3, Page 194}, $[b,c)$ is split by the degree $p$ Artin-Schreier extension defined by $a$. Hence, $\alpha=[a,e)$ for some $e\in F^\times$. 

Notice that a finite extension of a local field is still a local field. Hence, combining with Theorem \ref{takao}, we get the desired conclusion.
\end{proof}

\begin{remark}
Here we give a different proof of Chipchakov's result using Kato's Swan conductor when $\textup{char}(F)=p>0$ and $k$ is a local field. More generally, if $\text{char}(F)=0$, we can also use Kato's Swan conductor to give a proof as Proposition \ref{wildram} also works in the mixed characteristic case.
\end{remark}

\section{Ramification of a central division algebra over the field \texorpdfstring{$k((\pi))$}{Lg}}
In this section, let $F$ be a field of characteristic $p$ complete with respect to a discrete valuation $v$ and residue field $k$, where $[k:k^p]=p$. By the Cohen structure theorem, we have that $F\cong k((\pi))$, where $\pi$ is a uniformizer of $F$. We want to use the structure of $p$-torsion part of $\text{Br}\ k((\pi))$ to understand the ramification behavior of $p^n$-torsion part of $\textup{Br}\ k((\pi))$. We consider two cases: (1) $\text{Br.dim}_p (l)=0$ for all finite extension $l/k$, and (2) $k$ is a local field.
\subsection{\texorpdfstring{$\text{Br.dim}_p (l)=0$}{Lg} for all finite extension \texorpdfstring{$l/k$}{Lg}}
$ $\\
 In this case, $\text{Br}\ k((\pi))[p]$ has symbol length $1$. Hence, every central division algebra of {period $p$} over $k((\pi))$ is cyclic and has ramification index $p$ and a degree $p$ residue field extension. Now for a period $p^n$ central division algebra $A$ over $k((\pi))$, it has degree $p^n$ by Theorem \ref{c1s}. Hence we can assume $[A:k((\pi))]=p^{2n}$. This gives $e=e'=p^n$. The residue division algebra of $A$ is commutative and hence a field. It has degree $p^n$ over $k((\pi))$. We want to describe this residue field using the structure of $\text{Br}( k((\pi)))[p]$.

Next we explain how to read the information related to the residue field extension. Consider $p^{n-1}[A]$, where $[A]$ indicates the class of $A$ as above in $\text{Br}\ k((\pi))$. This class has period $p$. Hence, there exists a field extension $E_1/k((\pi))$ of degree $p$ with the degree $p$ residue field extension $l_1/k$ by Remark \ref{nrrf}. 

Then we consider $[A]_{E_1}$, the image of $[A]$ in $\text{Br}(E_1)$. The field $E_1$ is a complete discretely valued field with the residue field $l_1$ which is a degree $p$ field extension of $k$. Now $l_1/k$ is either an Artin-Schreier extension or a purely inseparable extension. By Proposition \ref{inse} and the assumptions on $k$, $\text{Br}(l_1)[p]=0$ and $[l_1:l_1^p]=p$. The period of $[A]_{E_1}$ is $p^{n-1}$. Otherwise, the index of $[A]_{E_1}$ is less than $p^{n-1}$. Then a splitting field of $[A]_{E_1}$ would have the degree over $k((\pi))$  less than $p^n(=p\cdot p^{n-1})$, which is a contradiction.

Hence, we can repeat the argument above to get a composition of field extensions of degree $p$, $k((\pi))\subset E_1\subset\cdots\subset E_n$, such that the composition of residue field extensions, $k\subset l_1\subset\cdots\subset l_n$, consists of either Artin-Schreier extension or purely inseparable extension of degree $p$. $E_n$ is a splitting field of $A$ with degree $p^n$.

This gives the following theorem:

\begin{theorem}
\label{pntorsion0}
Suppose that the field $k$ satisfies $[k:k^p]=p$ and $\text{Br.dim}_p(l)=0$ for all finite extensions $l/k$ .
Then the degree of a central division algebra of period $p^n$ over $k((\pi))$ is $p^n$. Moreover, it admits a splitting field of degree $p^n$ such that the residue field extension is of degree $p^n$ which is a composition of either Artin-Schreier extension of degree $p$ or purely inseparable extension of degree $p$. The ramification index is also $p^n$.
\end{theorem}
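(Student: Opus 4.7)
The degree claim is a direct application of Theorem~\ref{c1s}: $F = k((\pi))$ is complete (hence henselian) discretely valued with residue field $k$ satisfying $[k:k^p]=p$ and $\text{Br.dim}_p(l) = 0$ for every finite extension $l/k$, so Theorem~\ref{c1s} gives $\text{Br.dim}_p(F) = 1$, and therefore $\text{ind}(A) = \text{per}(A) = p^n$. The existence of the splitting tower will be established by induction on $n$, peeling off one factor of $p$ at a time.

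For the inductive step I would start with a class $[A] \in \text{Br}(F)$ of period exactly $p^m$ (for some $1 \le m \le n$) and consider $p^{m-1}[A] \in \text{Br}(F)[p]$. By Theorem~\ref{sc1} together with Remark~\ref{nrrf}, there is a degree-$p$ extension $E_1/F$ of ramification index $1$ that splits $p^{m-1}[A]$ and whose residue extension $l_1/k$ is either an Artin--Schreier extension or a purely inseparable extension of degree $p$. Before iterating I would verify two items: first, that $E_1$ again satisfies the hypotheses of the theorem, namely that it is a complete discretely valued field whose residue field $l_1$ has $[l_1:l_1^p] = p$ (Lemma~\ref{extofprank}, applicable to the finite extension $l_1/k$ in both the separable and purely inseparable cases) and whose finite extensions all have $p$-Brauer dimension $0$ (since any finite extension of $l_1$ is a finite extension of $k$); second, that $\text{per}([A]_{E_1})$ drops by exactly one power of $p$. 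The upper bound $\text{per}([A]_{E_1}) \le p^{m-1}$ is built into the construction, and the lower bound $\text{per}([A]_{E_1}) = \text{ind}([A]_{E_1}) \ge \text{ind}(A)/[E_1:F] = p^{m-1}$ uses the equality of period and index on $\text{Br}(E_1)$ furnished by applying Theorem~\ref{c1s} to $E_1$.

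Iterating $n$ times will produce a tower $F = E_0 \subset E_1 \subset \cdots \subset E_n$ of degree-$p$ inertial extensions whose residue fields form the required composition $k = l_0 \subset \cdots \subset l_n$ of Artin--Schreier and purely inseparable degree-$p$ extensions, with $[E_n:F] = [l_n:k] = p^n$ and $[A]_{E_n} = 0$. Finally, the ramification index $e = p^n$ of the algebra $A$ itself can be read off from the structural results recorded in Section~\ref{cdf}: the hypothesis $\text{Br.dim}_p(k') = 0$ for the finite extension $k'/k$ giving the center of the residue division algebra $\Delta$ forces $[\Delta] = 0$ and thus $\Delta = k'$, so $f = 1$, and Lemma~\ref{ed} then yields $e = e' = p^n$ and $d = 1$. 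The main obstacle is the bookkeeping at each inductive step---confirming simultaneously that the residue-field hypotheses propagate and that the period strictly decrements by a factor of $p$---with the purely inseparable residue case being the one requiring the most care; once these two points are handled, the tower construction goes through by straightforward iteration.
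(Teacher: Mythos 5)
Your proposal is correct and follows essentially the same route as the paper: the degree claim via Theorem~\ref{c1s}, the iterative construction of the tower $F\subset E_1\subset\cdots\subset E_n$ by splitting $p^{m-1}[A]$ with the inertial degree-$p$ extension from Remark~\ref{nrrf}, the propagation of the residue-field hypotheses (Proposition~\ref{inse}/Lemma~\ref{extofprank}) together with the period-drop argument, and the ramification index read off from Lemma~\ref{ed} with $f=1$. The only cosmetic difference is that you phrase the lower bound on $\textup{per}([A]_{E_1})$ via the inequality $\textup{ind}([A]_{E_1})\geq \textup{ind}(A)/[E_1:F]$, whereas the paper argues by comparing degrees of splitting fields over $k((\pi))$; these are equivalent.
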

In fact, we have an easy way to determine the separable degree and the inseparable degree of the residue field extension.
\begin{corol}
We continue with the same assumptions as in Theorem \ref{pntorsion0}. Let $A$ be a central division algebra over $k((\pi))$ of period $p^n$. Denote the order of $[A]_{\text{tame}}$ in $\text{Br}\ k((\pi))_\textup{tame} $ by $p^m(\leq p^n)$, where $k((\pi))_{\text{tame}}$ is the maximal tame extension of $k((\pi))$. Then the residue field $l$ of $A$ has degree $p^n$ over $k$ with separable degree $[l:k]_s=p^{n-m}$ and inseparable degree $[l:k]_i=p^m$.
\end{corol}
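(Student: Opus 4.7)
The plan is to compute the order of $[A]$ in the Brauer group of the maximal unramified extension $F^{\textup{ur}}$ of $F=k((\pi))$, and then to observe that the further passage to $k((\pi))_{\textup{tame}}$ does not change this order on $p$-primary torsion.

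First I would verify that the residue field $k^{\textup{sep}}$ of $F^{\textup{ur}}$ still satisfies the hypotheses of Theorem \ref{pntorsion0}: the $p$-rank is preserved under separable extensions by Lemma \ref{extofprank}, and any finite (necessarily purely inseparable) extension of $k^{\textup{sep}}$ descends to a finite extension of $k$ composed with a finite separable base, so $\textup{Br.dim}_p = 0$ still holds on finite extensions of $k^{\textup{sep}}$. Hence Theorem \ref{pntorsion0} applies to the completion $\widehat{F^{\textup{ur}}} \cong k^{\textup{sep}}((\pi))$, and its $p$-primary Brauer classes agree with those of $F^{\textup{ur}}$.

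Next I would compute $\textup{ind}([A]_{F^{\textup{ur}}})$ in both directions. For the upper bound, by the structure theory leading to Theorem \ref{pntorsion0}, $A$ contains a maximal unramified subfield $E$ with residue field $l$ that splits $A$; after unramified base change, each connected component of $E \otimes_F F^{\textup{ur}}$ is an unramified extension of $F^{\textup{ur}}$ whose residue is isomorphic to $l \otimes_{l^s} k^{\textup{sep}}$ (with $l^s$ the maximal separable subextension of $l/k$), a purely inseparable field of degree $[l:k]_i$, and it still splits $[A]_{F^{\textup{ur}}}$. For the lower bound, Theorem \ref{pntorsion0} applied over $\widehat{F^{\textup{ur}}}$ forces the residue field of the division algebra representing $[A]_{F^{\textup{ur}}}$ to be purely inseparable over $k^{\textup{sep}}$, since $k^{\textup{sep}}$ admits no nontrivial Artin--Schreier extensions and so every degree-$p$ residue step in the chain produced there must be of the purely inseparable type (cases (ii) or (iii) of Theorem \ref{structure}). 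Identifying this residue with $l \otimes_{l^s} k^{\textup{sep}}$ via the functoriality of maximal orders under unramified base change yields $\textup{ind}([A]_{F^{\textup{ur}}}) = [l:k]_i$, which coincides with the period by Theorem \ref{pntorsion0}.

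Finally, $k((\pi))_{\textup{tame}}/F^{\textup{ur}}$ is a filtered colimit of totally ramified extensions of degree prime to $p$, obtained by adjoining prime-to-$p$ roots of $\pi$, so a restriction--corestriction argument shows that $\textup{Br}(F^{\textup{ur}})[p^\infty] \to \textup{Br}(k((\pi))_{\textup{tame}})[p^\infty]$ is injective and preserves the order of each class. Consequently, the order $p^m$ of $[A]_{\textup{tame}}$ equals $\textup{ind}([A]_{F^{\textup{ur}}}) = [l:k]_i$. Combined with $[l:k] = p^n$ from Theorem \ref{pntorsion0}, this gives $[l:k]_i = p^m$ and $[l:k]_s = p^{n-m}$, as claimed. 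The main technical hurdle will be the identification of the residue of the division algebra representing $[A]_{F^{\textup{ur}}}$ with $l \otimes_{l^s} k^{\textup{sep}}$, which requires a careful bookkeeping of maximal orders and their residue algebras under unramified base change.
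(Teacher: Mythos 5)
There is a genuine gap, and it sits exactly where you park the ``main technical hurdle.'' Your argument splits $\mathrm{ind}([A]_{F^{\textup{ur}}})=[l:k]_i$ into two inequalities. The upper bound $\mathrm{ind}([A]_{F^{\textup{ur}}})\le [l:k]_i$ via the splitting field $E\cdot F^{\textup{ur}}$ is fine, as is the chain $p^m=\mathrm{ord}([A]_{\textup{tame}})=\mathrm{per}([A]_{F^{\textup{ur}}})=\mathrm{ind}([A]_{F^{\textup{ur}}})$ (restriction--corestriction plus period $=$ index over $k^{\textup{sep}}((\pi))$). But this only yields $p^m\le [l:k]_i$, i.e.\ $[l:k]_s\le p^{n-m}$. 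The reverse inequality is carried entirely by your claimed identification of the residue of the division part $D'$ of $D\otimes_F F^{\textup{ur}}$ with $l\otimes_{l^s}k^{\textup{sep}}$, and ``functoriality of maximal orders under unramified base change'' does not deliver it: once the index drops, $B\otimes_{\mathcal{O}_F}\mathcal{O}_{F^{\textup{ur}}}$ is no longer a maximal order (compare $k^{\textup{sep}}$-dimensions of the semisimple quotients: $p^n$ for $B\otimes\mathcal{O}_{F^{\textup{ur}}}$ versus $p^{2n-m}$ for a maximal order of $M_{p^{n-m}}(D')$), and a priori the residue of $D'$ is only \emph{some} purely inseparable extension of $k^{\textup{sep}}$ of degree $\mathrm{ind}(D')$. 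Since over $k^{\textup{sep}}$ the residue degree $e'$ of $D'$ automatically equals its index, asserting that this residue is all of $l\otimes_{l^s}k^{\textup{sep}}$ rather than a proper subfield is \emph{equivalent} to the inequality you are trying to prove; the argument is circular as proposed. Note also that every other soft estimate (e.g.\ lifting $l^s$ to an unramified subfield $F_{l^s}\subset D$ and comparing periods) again produces only $p^m\le[l:k]_i$.

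The missing input, which is how the paper proceeds, is to extract the separable part of $l$ from the tameness of $p^m[A]$ rather than from the wild analysis over $F^{\textup{ur}}$. The class $p^m[A]$ has period $p^{n-m}$ and lies in the tame subgroup; since $\mathrm{Br}(k)=0$, Theorem \ref{sbr} identifies it with a class in $H^1(k,\mathbb{Q}_p/\mathbb{Z}_p)$, so it is split by the unramified extension $F_0/F$ of degree $p^{n-m}$ with cyclic separable residue extension $l_0/k$. Then $\mathrm{ind}([A]_{F_0})=\mathrm{per}([A]_{F_0})=p^m=\mathrm{ind}(A)/[F_0:F]$, so by the index-reduction criterion $F_0$ embeds into the division algebra $A$, whence $l_0\hookrightarrow l$ and $[l:k]_s\ge p^{n-m}$. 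Combined with your upper bound this closes the proof; equivalently, the paper base-changes to $F_0$ and applies the totally wild case $m=n$ (where Theorem \ref{pntorsion0} forces a purely inseparable residue tower) to the centralizer of $F_0$ in $A$. You should either incorporate this step or supply an independent proof that the residue of $D'$ exhausts $l\otimes_{l^s}k^{\textup{sep}}$; as written, that is an assertion, not a consequence of base-change formalism.
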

\begin{proof}
$ $\\
Consider the class $p^m[A]$. It is split by a tame extension of degree $p^{n-m}$ over $k((\pi))$. More precisely, this tame extension has ramification index $1$ and residual degree $p^{n-m}$. Hence, the proof reduces to the case $m=n$. This case just follows from Theorem \ref{pntorsion0}.
\end{proof}

\subsection{\texorpdfstring{$k$}{Lg} a local field}
$ $\\
In general, if $\text{Br.dim}_p(l)>0$ for a finite extension $l/k$, the situation is more complicated since there exist nontrivial division algebras over the residue field $k$. However, when $k$ is a local field, we have the following theorem similar to Theorem \ref{pntorsion0}.
\begin{theorem}
Suppose that $k$ is a local field, i.e.~$k\cong \mathbb{F}_q((t)),q=p^n.$
Then the degree of a central division algebra of period $p^n$ over $k((\pi))$ is $p^n$. Moreover, it admits a splitting field of degree $p^n$ such that the residue field extension is of degree $p^n$ and it is a composition of either Artin-Schreier extension of degree $p$ or purely inseparable extension of degree $p$. The ramification index is also $p^n$.
\end{theorem}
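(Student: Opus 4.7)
The plan is to adapt the proof of Theorem \ref{pntorsion0}, with the assumption ``$\text{Br.dim}_p(l)=0$ for all finite $l/k$'' replaced by Theorem \ref{Chipchakov1}. Let $A$ be a central division algebra over $k((\pi))$ of period $p^n$. Since $k\cong \mathbb{F}_q((t))$ is a local field, Theorem \ref{Chipchakov1} gives $\text{Br.dim}(k((\pi)))=1$, and so $\text{ind}(A)=\text{per}(A)=p^n$ and $[A:k((\pi))]=p^{2n}$. The invariants of the valued division algebra $A$ satisfy $ed=p^n$ and $ee'f^2=p^{2n}$ by Lemma \ref{ed}. The proof then has two parts: constructing the splitting field of the claimed shape, and pinning down the ramification index.

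The splitting field is built inductively, producing a tower $k((\pi))=E_0\subset E_1\subset\cdots\subset E_n$ of degree-$p$ extensions with residue fields $k=l_0\subset\cdots\subset l_n$, where each $l_i/l_{i-1}$ is either Artin--Schreier or purely inseparable of degree $p$ and $[A]_{E_i}$ has period $p^{n-i}$. At step $i$, examine the class $p^{n-i}[A]_{E_{i-1}}\in\text{Br}(E_{i-1})[p]$. In the wild case, Theorem \ref{henselkato} and Remark \ref{nrrf} present this class as a symbol algebra together with a splitting field $E_i/E_{i-1}$ of degree $p$ whose residue extension is purely inseparable of degree $p$: one takes the Artin--Schreier splitting $x^p-x=f/\pi^{pm}$ in the form $[f/\pi^{pm},e\pi)$ and the splitting $y^p=g$ in the form $[c/\pi^n,g)$. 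In the tame case, the argument from the proof of Theorem \ref{Chipchakov1}---which uses that $l_{i-1}$ is a local field so that its $p$-Brauer group is split by any nontrivial Artin--Schreier extension---writes the class as a single symbol $[a,e)$ with $a$ defining an unramified Artin--Schreier extension of $E_{i-1}$, and we take $E_i=E_{i-1}(x)$ with $x^p-x=a$. At each step $l_i$ remains a local field because finite extensions of positive-characteristic local fields are local fields, so $[l_i:l_i^p]=p$ by Proposition \ref{inse} and Theorem \ref{Chipchakov1} continues to apply over $E_i$. The period drop $\text{per}([A]_{E_i})=p^{n-i}$ follows by combining $p^{n-i}[A]_{E_i}=0$ with the index bound $\text{ind}([A]_{E_i})\cdot[E_i:k((\pi))]\geq p^n$ and the equality $\text{per}=\text{ind}$ over $E_i$. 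After $n$ iterations, $E_n$ splits $A$ and has the claimed composite structure.

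The main obstacle is the ramification index claim. In Theorem \ref{pntorsion0}, the fact that $\text{Br}(l)[p]=0$ for every finite $l/k$ forces $f=1$ in the structure lemma of Section 3.3 and hence $e=p^n$; this argument breaks here because $\text{Br}(k)[p]\neq 0$ permits entirely unramified division algebras of any period, for instance the lift $\Delta_0\otimes_k k((\pi))$ of a period-$p^n$ class $\Delta_0\in\text{Br}(k)$. The plausible strategy is to combine the embedding $l_n\hookrightarrow \Delta$ coming from $E_n\subset A$ (which yields $e'f\geq p^n$) with the identity $ee'f^2=p^{2n}$ to get $ef\leq p^n$, and then to track the wild contributions of the symbol presentations appearing along the inductive tower to force $e\geq p^n$, which together with $ed=p^n$ would give $e=e'=p^n$ and $d=f=1$. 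It may in fact be necessary to restrict the claim to the wild part of $\text{Br}(k((\pi)))[p^n]$ or to reinterpret ``ramification index'' as referring to the ramification of the constructed tower, since for the unramified lift above one has $e=1$, $d=p^n$, $f=p^n$, $e'=1$, so the conclusion $e=p^n$ does not hold universally without further restriction.
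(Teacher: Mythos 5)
Your construction of the splitting tower is exactly the paper's argument: the paper's entire proof is the remark that it is ``similar to the proof of Theorem \ref{pntorsion0}'', with the local-field hypothesis entering only through Theorem \ref{Chipchakov1}, which supplies both $\textup{per}=\textup{ind}$ over $k((\pi))$ and all its finite extensions (hence the degree $p^n$ and the period drop by exactly $p$ at each stage) and the fact that a tamely ramified period-$p$ class is split by a degree-$p$ extension with Artin--Schreier residue extension. Your treatment of the wild steps via Theorem \ref{henselkato} and Remark \ref{nrrf}, and of the period drop via the index inequality, reproduces the paper's reasoning for Theorem \ref{pntorsion0} faithfully, so for the first two assertions of the statement your route and the paper's coincide.

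Your reservation about the ramification index is well founded, and the paper does not address it: in Theorem \ref{pntorsion0} the equality $e=p^n$ is extracted from $f=1$ in the notation of Lemma \ref{ed}, which rests on $\textup{Br}(l)=0$ for every finite $l/k$, and that input is genuinely unavailable when $k$ is a local field. Your counterexample is correct: if $\Delta_0$ is a central division algebra over $k=\mathbb{F}_q((t))$ of period (hence index) $p^n$, its unramified lift to $k((\pi))$ is a period-$p^n$ division algebra with $e=1$, $d=p^n$, $f=p^n$, $e'=1$; it still has degree $p^n$ and still admits a splitting field of the advertised shape (the unramified extension with residue field $\mathbb{F}_{q^{p^n}}((t))$, a tower of Artin--Schreier steps), but the last sentence of the statement fails for it. Reading the ramification index as that of the constructed tower does not rescue the claim either, since each step of the tower has residue degree $p$ and ramification index $1$. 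So the conclusion $e=p^n$ requires an additional hypothesis (for instance that the class has no nontrivial unramified component, which is what forces the residue algebra to be a field and $f=1$), and in flagging this you have located an error in the statement rather than a gap in your own argument; the parts of the theorem that the paper actually argues for are proved correctly and by the same method.
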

\begin{proof}
$ $\\
The proof is similar to the proof of Theorem \ref{pntorsion0}. The local field condition on $k$ is used for Theorem \ref{Chipchakov1}. It follows that a tamely ramified Brauer class in $\text{Br}\ k((\pi))[p]$ is split by a tame Artin-Schreier extension of degree $p$ over $k((\pi))$.
\end{proof}
Similarly, we have the following corollary.
\begin{corol}
Suppose that $k$ is a local field, i.e.~$k\cong \mathbb{F}_q((t)),q=p^n$. Let $A$ be a central division algebra over $k((\pi))$ of period $p^n$. Denote the order of $[A]_{\text{tame}}$ in $\text{Br}_{\text{tame}}\ k((\pi)) $ by $p^m(\leq p^n)$, where $k((\pi))_{\text{tame}}$ is the maximal tame extension of $k((\pi))$. Then the residue field $l$ of $A$ has degree $p^n$ over $k$ with separable degree $[l:k]_s=p^{n-m}$ and inseparable degree $[l:k]_i=p^m$.
\end{corol}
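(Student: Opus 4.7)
The plan is to mirror the proof of the immediately preceding corollary (the $\text{Br.dim}_p(l)=0$ case) almost verbatim, replacing Theorem \ref{sc1} by Theorem \ref{Chipchakov1} wherever the tame part is analyzed, and replacing Theorem \ref{pntorsion0} by the local-field analogue stated directly above this corollary. The key observation enabling this substitution is that the proof of Theorem \ref{Chipchakov1} shows every tame $p$-torsion Brauer class over $k((\pi))$ is a symbol $[a,e)$ with $a$ defining an unramified Artin--Schreier extension --- the same shape as in the $\text{Br.dim}_p(l)=0$ case --- so the tame part can be killed by an unramified extension, exactly as in the previous corollary.

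First I consider $p^m[A]\in \text{Br}(k((\pi)))$. By the definition of $m$, $p^m[A]_{\text{tame}}=0$, so $p^m[A]$ is split by some finite tame extension, and it has period exactly $p^{n-m}$. Iterating Chipchakov's description of tame $p$-torsion classes via the Witt-vector ladder of Theorem \ref{hightop}, I expect $p^m[A]$ to be split by an unramified cyclic extension $L/k((\pi))$ of degree $p^{n-m}$ coming from an Artin--Schreier--Witt tower on the residue field side. Thus $L$ has ramification index $1$ and residual degree $p^{n-m}$, and its residue field $l_0$ is a finite separable extension of $\mathbb{F}_q((t))$, hence still a local field of characteristic $p$ with $[l_0 : l_0^p]=p$.

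Second I base-change $[A]$ to $L$. The class $[A]_L$ has period $p^m$, and by construction its image in $\text{Br}(L_{\text{tame}})$ is trivial, i.e.\ it is purely wildly ramified. Applying the local-field version of Theorem \ref{pntorsion0} (the theorem stated directly above this corollary) over $L$ produces a splitting field $L'/L$ of degree $p^m$ whose residue extension is a tower of $m$ purely inseparable degree-$p$ extensions of $l_0$. The composite $L'/k((\pi))$ then has degree $p^n$ and splits $[A]$; its residue extension over $k$ factors as a separable piece $l_0/k$ of degree $p^{n-m}$ followed by a purely inseparable piece of degree $p^m$. Because $A$ has index $p^n$ (by Theorem \ref{Chipchakov1}) and the structural results of Section \ref{cdf} force the residue of $A$ to realize this splitting tower, the residue field $l$ of $A$ satisfies $[l:k]_s=p^{n-m}$ and $[l:k]_i=p^m$.

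The step I expect to require the most care is the first one: verifying that the tame splitting extension can be arranged to be unramified, rather than some more general tame extension (potentially involving $\pi^{1/r}$ for $r$ coprime to $p$). This rests on the specific form of tame $p$-torsion symbols from the proof of Theorem \ref{Chipchakov1}: an ostensibly ramified tame contribution $[b,c)$ with $[b,c)\in \text{Br}(k)[p]$ is absorbed into an unramified Artin--Schreier symbol using that $k$ is a local field, so no genuinely ramified tame extension is needed. Lifting this absorption through the Witt ladder to the $p^{n-m}$-torsion is the only nontrivial part of the plan; everything else is a direct translation of the preceding corollary's proof.
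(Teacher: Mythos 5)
Your overall architecture matches the paper's: the paper proves this corollary exactly as it proves the preceding one, namely by splitting $p^m[A]$ with an unramified (ramification index $1$, residual degree $p^{n-m}$) extension $L/k((\pi))$, thereby reducing to the case $m=n$ over $L$, and then invoking the local-field analogue of Theorem \ref{pntorsion0}. Your first step, including the care you take to see that the tame splitting field can be taken unramified via the $[a,e)$ normal form from Theorem \ref{Chipchakov1}, is exactly the intended argument.

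However, your second step contains a genuine error: you assert that ``by construction'' the image of $[A]_L$ in $\textup{Br}(L_{\textup{tame}})$ is trivial, and you gloss this as ``purely wildly ramified.'' Both halves are wrong. Since $L/k((\pi))$ is unramified with separable residue extension, $L$ sits inside $k((\pi))_{\textup{tame}}$, so $L_{\textup{tame}}=k((\pi))_{\textup{tame}}$ and the image of $[A]_L$ there is the same as the image of $[A]$, which by hypothesis has order exactly $p^m$ --- not $1$. Moreover, trivial image in $\textup{Br}(L_{\textup{tame}})$ is the definition of being \emph{tamely} ramified (membership in the subgroup $M_0=\textup{Br}_{\textup{tame}}$ of Theorem \ref{sbr}), which would force the splitting tower to consist of Artin--Schreier/unramified steps and would give $[l:k]_i=1$, the opposite of the statement. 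What you actually need, and what is true, is that the tame order of $[A]_L$ equals its full period $p^m$: this guarantees that $p^j[A]_L$ is wildly ramified for every $0\le j<m$, and it is Theorem \ref{henselkato} applied to these wild classes (propagated down the tower of degree-$p$ extensions) that makes each rung of the splitting tower purely inseparable on residues. With that correction --- replacing ``trivial image'' by ``image of full order $p^m$'' --- your proof coincides with the paper's.
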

\begin{remark}
    In fact, the condition on $k$ can be replaced by $k$ is $p$-quasilocal and almost perfect using \cite{MR4453883}*{Theorem 2.3}. The key ingredient of the proof is  the fact that $\text{Br.dim}_p(k)=1$.
\end{remark}

 \section{
 Period-Index Problem for the Brauer group of an algebraic curve \texorpdfstring{$X$}{Lg} over \texorpdfstring{$k((t))$}
{Lg}
 }
We investigate the Brauer dimension at $p$ for the Brauer group of an algebraic curve $X$ over $k((t))$. Here we use the injection $\text{Br}(X)\hookrightarrow \text{Br}(K(X))$, where $F=K(X)$ is the function field of $X$ over a field $K=k((t))$ and $k$ is an algebraically closed field of characteristic $p>0$. Let $\mathcal{O}_K=k[[t]]$ be the complete discrete valuation ring with the field of fractions $K$ and $t$ the uniformizer. Denote by $T$ the unique closed point of $\text{Spec}(\mathcal{O}_K)$.    

\begin{definition}

    An integral model $\mathcal{X}$ of $X$ is a $2$-dimensional regular $\mathcal{O}_K$-scheme such that 
\begin{enumerate}
\item[(\romannumeral 1)] $p:\mathcal{X}\rightarrow \textup{Spec}(\mathcal{O}_K)$ is flat and proper;

\item[(\romannumeral 2)]
There is an isomorphism of $K$-schemes $X\cong \mathcal{X}_{K}$;

\item[(\romannumeral 3)] The reduced scheme $(Y=\mathcal{X}\times T)_{\text{red}}$ is a $1$-dimensional (proper) schemes over $T$ whose irreducible components are all regular and has normal crossings (i.e.~$\mathcal{X}_T$ only has ordinary double points as singularities).
\end{enumerate}
\end{definition}

The existence of an integral model follows from the resolution of singularities of excellent $2$-dimensional schemes (\cite{MR0491722}), and the embedded resolution of the special fiber (\cite{MR1917232}). If $X$ admits a smooth integral model $\mathcal{X}$ over $\mathcal{O}_K$, we say that $X$ has {\it good reduction} over $\mathcal{O}_K$. In this case, the special fiber $\mathcal{X}_T$ will have a single irreducible component that is  a proper smooth curve over $T$.

\[
\xymatrix{
 X\ar[r]\ar[d] & \mathcal{X} \ar[d] & Y \ar[l]\ar[d] \\
 \text{Spec}(K) \ar[r] & \text{Spec}(\mathcal{O}_K) & T=\text{Spec}(k) \ar[l]
}
\]

For a closed point $P$ of $\mathcal{X}$, let $\locr{\mathcal{X},P}$ denote the local ring at $P$, $\hat{\mathcal{O}}_{\mathcal{X},P}$ the completion of the regular local ring $\locr{\mathcal{X},P}$ at its maximal ideal and $F_P$ the field of fractions of $\hat{\mathcal{O}}_{\mathcal{X},P}$. For an open subset $U$ of an irreducible component of $Y$, let $R_U$ be the ring consisting of elements in $F$ which are regular on $U$. Then $\mathcal{O}_K\subset R_U$. Let $\hat{R}_U$ be the $(t)$-adic completion of $R_U$ and $F_U$ the field of fractions of $\hat{R}_U$.

Now suppose the algebraic curve $X$ has good reduction over $\mathcal{O}_K$. We have the following exact sequence by purity of the Brauer groups in codimension $1$:
\[
\xymatrix{
    0\ar[r] & \text{Br}(X) \ar[r] & \text{Br}(K(X)) \ar[r]^-{\oplus i_x} & \bigoplus\limits_{x\in X_0} \text{Br}(\text{Quot}(\hat{\locr{}}_{X,x})).
    }
\]

 For $\omega\in \text{Br}(X)$, we have that $\omega\in \text{Br}(\locr{X,x})$ for all $x\in X_0$. 

\begin{lemma}
Let $f:X\rightarrow Y$ be a morphism of schemes. Let $y\in Y$ and $q:X'=X\times_Y \text{Spec}\mathcal{O}_{Y,y}\rightarrow X$
be the projection morphism. Then $\mathcal{O}_{X',q^{-1}(x)}\simeq \mathcal{O}_{X,x}$ for any $x\in X_y$.
 \end{lemma}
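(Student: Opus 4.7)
The plan is to reduce the question to a standard commutative algebra computation about iterated localizations. Since the statement is local on $X$ and $Y$, I would first choose an affine open $\mathrm{Spec}\,A \subset Y$ containing $y$ and an affine open $\mathrm{Spec}\,B \subset f^{-1}(\mathrm{Spec}\,A)$ containing $x$; neither the local ring $\mathcal{O}_{X,x}$ nor the local ring at a point of the base change changes under such shrinking. Let $\phi:A\to B$ be the ring map, let $\mathfrak{q}\subset A$ correspond to $y$, and let $\mathfrak{p}\subset B$ correspond to $x$. The hypothesis $x\in X_y$ translates to $\phi^{-1}(\mathfrak{p})=\mathfrak{q}$.

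Next I would identify the base change and its point over $x$. One has $X'=\mathrm{Spec}(B\otimes_A A_\mathfrak{q})=\mathrm{Spec}(S^{-1}B)$, where $S:=\phi(A\setminus\mathfrak{q})\subset B$. The key observation for identifying $q^{-1}(x)$ is that since $f(x)=y$, the canonical map $\mathrm{Spec}\,\kappa(x)\to Y$ factors through $\mathrm{Spec}\,\mathcal{O}_{Y,y}$ (via $\mathrm{Spec}\,\kappa(y)\hookrightarrow\mathrm{Spec}\,\mathcal{O}_{Y,y}$), so the scheme-theoretic fiber satisfies
\[
q^{-1}(x)=\mathrm{Spec}\,\kappa(x)\times_Y \mathrm{Spec}\,\mathcal{O}_{Y,y}=\mathrm{Spec}\,\kappa(x)\times_{\mathcal{O}_{Y,y}}\mathrm{Spec}\,\mathcal{O}_{Y,y}=\mathrm{Spec}\,\kappa(x).
\]
In particular $q^{-1}(x)$ consists of a single point $x'$, which in the affine description corresponds to the prime $S^{-1}\mathfrak{p}\subset S^{-1}B$ (this prime is well-defined because $\mathfrak{p}\cap S=\emptyset$, as $\phi^{-1}(\mathfrak{p})=\mathfrak{q}$ is disjoint from $A\setminus\mathfrak{q}$).

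Finally I would invoke the standard iterated-localization identity: since $S\subseteq B\setminus\mathfrak{p}$, the localization $B_\mathfrak{p}$ factors through $S^{-1}B$, giving a canonical isomorphism
\[
\mathcal{O}_{X',x'}=(S^{-1}B)_{S^{-1}\mathfrak{p}}\;\xrightarrow{\sim}\;B_\mathfrak{p}=\mathcal{O}_{X,x},
\]
which is precisely the claim. The proof is essentially bookkeeping; the only point that requires care is the identification of $q^{-1}(x)$ as a single scheme-theoretic point, which is exactly why the hypothesis $x\in X_y$ (and not merely $x\in X$) is used.
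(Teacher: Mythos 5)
Your proof is correct. The paper actually states this lemma without any proof at all, so there is nothing to compare against; your argument — reducing to affine opens, identifying $X'$ with $\operatorname{Spec}(S^{-1}B)$ for $S=\phi(A\setminus\mathfrak{q})$, checking via the scheme-theoretic fiber that $q^{-1}(x)$ is the single prime $S^{-1}\mathfrak{p}$ (using $\phi^{-1}(\mathfrak{p})=\mathfrak{q}$ so that $\mathfrak{p}\cap S=\emptyset$), and then applying the iterated-localization identity $(S^{-1}B)_{S^{-1}\mathfrak{p}}\cong B_{\mathfrak{p}}$ — is exactly the standard verification one would supply, and it correctly isolates the one point needing care, namely that $q^{-1}(x)$ is a single point so that the notation $\mathcal{O}_{X',q^{-1}(x)}$ makes sense.
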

 
 Recall that every effective irreducible divisor $D\subset \mathcal{X}$ is either $Y$ ($D$ is vertical), or the closure of a closed point $x\in X_0$ of the generic fiber ($D$ is horizontal). Using this lemma, it follows that $\omega\in \text{Br}(\mathcal{O}_{\mathcal{X},x})$ for all $x\in X_0\subset \mathcal{X}^1$. Hence we have that $\omega$ is ramified only along the vertical divisor $Y$. Hence we define Kato's Swan conductor of $\omega\in \text{Br}(X)$ in the following way.
 
 \begin{definition}[Swan conductor for Brauer groups of curves]
 \label{globalsw}
 Let $k$ be an algebraically closed field of characteristic $p>0$ and let $X$ be an algebraic curve over $k((t))$. Suppose $X$ has good reduction with the associated model $\mathcal{X}\rightarrow \textup{Spec} \ k[[t]]$. Denote by $v_Y$ the valuation associated to the divisor $Y$ and $F$ the function field $k(X)$.
 
 Then we define the $\mathcal{X}$-Swan conductor of $\omega\in \textup{Br}(X)[p]$ by
 \[
 \textup{sw}_\mathcal{X}(\omega)=\textup{sw}_{F,~v_Y}(\omega).
 \]
 \end{definition}

 Next we state the main result in this section. 
  \begin{theorem}
Let $X$ be a smooth projective curve over $k((t))$ where $k$ is an algebraically closed fields of characteristic $p>0$. Suppose there is a model $\mathcal{X}$ over $k[[t]]$ with good reduction. Suppose that $\omega\in \textup{Br}(X)[p]$ satisfies $\textup{sw}_{\mathcal{X}}(\omega)< p$. Then $\textup{per}(\omega)=\textup{ind}(\omega)$.    
      \label{swp}
  \end{theorem}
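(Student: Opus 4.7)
The plan is to apply the field-patching method of Harbater-Hartmann-Krashen, as formulated in Theorem \ref{patching}, to reduce the period-index equality for $\omega \in \textup{Br}(X)[p]$ to two local problems: one over the completion $F_\eta$ at the generic point $\eta$ of the closed fiber $Y$, and one over the completions $F_P$ at the closed points $P$ of $Y$. Concretely, if I can exhibit, for each such local field, a degree $p$ extension that splits $\omega$, then the patching theorem forces $\textup{ind}(\omega) \mid p = \textup{per}(\omega)$, giving the desired equality.

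For the generic point $\eta$ of $Y$, the field $F_\eta$ is complete (hence henselian) discretely valued, with residue field the function field $k(Y)$ of a smooth projective curve over the algebraically closed field $k$ of characteristic $p$. Since $k$ is algebraically closed, $k(Y)$ is a $C_1$ field of $p$-rank one, and the same is true of every finite extension $l/k(Y)$, being a function field of a finite cover of $Y$ over $k$. Therefore Theorem \ref{firstthe} applies and yields $\textup{Br.dim}_p(F_\eta) = 1$, so $\textup{ind}(\omega_{F_\eta})$ divides $p$.

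For a closed point $P$ of $Y$, the completion $\hat{\mathcal{O}}_{\mathcal{X},P} \cong k[[\pi,t]]$ is a $2$-dimensional regular complete local ring. Here I would combine two ingredients. First, the Gersten-type exact sequence (Theorem \ref{Gersten}) for logarithmic de Rham-Witt sheaves on $\hat{\mathcal{O}}_{\mathcal{X},P}$ gives a handle on the residue data of $\omega_{F_P}$ along the two prime divisors through $P$. Second, Proposition \ref{localest} shows that the global bound $\textup{sw}_{\mathcal{X}}(\omega) < p$ descends to a bound on the local Swan conductor of $\omega_{F_P}$ with respect to the valuation given by $Y$. Combined with the henselian structure theorem (Theorem \ref{henselkato}), this bound forces $\omega_{F_P}$ into one of the explicit symbol-algebra normal forms of Theorem \ref{sc1}, each of which is split by a degree $p$ extension of $F_P$.

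The hard part is the closed-point analysis. The condition $\textup{sw}_{\mathcal{X}}(\omega) < p$ is essential because a Swan conductor equal to or greater than $p$ can in principle introduce cross terms between the two uniformizers $\pi$ and $t$ at $P$ in the Kato filtration that are no longer absorbed by a single degree $p$ symbol; managing the interaction of the Gersten-type sequence with the Kato filtration past this threshold is exactly what the paper leaves open. Once both local splittings are in hand, Theorem \ref{patching} assembles them into a global degree $p$ extension of $F = k(X)$ splitting $\omega$, which completes the argument.
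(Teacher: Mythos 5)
Your proposal follows essentially the same route as the paper: reduce via the patching theorem (Theorem \ref{patching}) to the generic point of $Y$, handled by Theorem \ref{sc1} (equivalently Theorem \ref{firstthe}) since the residue field $k(Y)$ is $C_1$ of $p$-rank one, and to the closed points, handled by combining Proposition \ref{localest} with the Gersten-type sequence (Theorem \ref{Gersten}) to show the local class is a single degree $p$ symbol $[\ast,\pi)$ (Theorem \ref{localperind}). The only ingredient you leave implicit is why the closed-point class collapses to a \emph{single} symbol rather than a sum of several degree $p$ symbols --- in the paper this is because all the wild terms share the slot $\pi$ and the tame term $[f_0,t)$ dies by Hensel's lemma --- but this is exactly the content of Theorem \ref{localperind}, which you correctly invoke.
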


In order to prove Theorem \ref{swp}, we will use the patching method from \cite{MR2545681}, which reduces the global period-index problem to two types of local period-index problems. We continue to use the notation from the beginning of the section.

Let $\eta$ be a generic point of an irreducible component of $Y$ and $F_\eta$ the completion of $F$ at the discrete valuation given by $\eta$. Let $D$ be a central simple algebra over $F$. By \cite{MR3432268}*{5.8}, there exists an irreducible open set $U_\eta$ of $Y$ containing $\eta$ such that $\textup{ind}(D\otimes_F F_{U_\eta}) = \textup{ind}(D\otimes_F F_\eta)$.
\begin{theorem}[Patching, \cite{MR2545681}*{Theorem 5.1}\cite{MR3219517}*{Page 228}]
\label{patching}
Let $D$ be a central simple algebra over $F$ of period $p$.
Let $S_0$ be a finite set of closed points of $\mathcal{X}$ containing all the points of intersection of the components of $Y$ and the support of the ramification divisor of $D$. Let $S$ be a finite set of closed points of $\mathcal{X}$ containing $S_0$ and $Y\setminus (\cup\ U_{\eta})$, where $\eta$ varies over generic points of $Y$. Then
\[
\textup{ind}(D)=\lcm \left\{\textup{ind}(D\otimes F_\zeta)\right\},
\]
where $\zeta$ runs over $S$ and irreducible components of $Y\setminus S$.
\end{theorem}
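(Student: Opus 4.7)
The plan is to prove the equality by establishing divisibility in both directions. One direction is immediate: since base change along $F\hookrightarrow F_\zeta$ can only decrease the index, $\textup{ind}(D\otimes F_\zeta)$ divides $\textup{ind}(D)$ for every $\zeta$ in the index set, so the LCM $N:=\lcm\{\textup{ind}(D\otimes F_\zeta)\}$ divides $\textup{ind}(D)$. The substantive direction is the reverse divisibility $\textup{ind}(D)\mid N$, which amounts to producing a separable splitting algebra for $D$ over $F$ of degree dividing $N$.

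To produce this global splitting I would invoke the field-patching machinery of Harbater-Hartmann-Krashen adapted to the arithmetic surface $\mathcal{X}$, following the strategy of \cite{MR3219517}. The hypothesis $\textup{ind}(D\otimes F_\zeta)\mid N$ for every $\zeta$ is exactly the statement that the generalized Severi-Brauer variety $\textup{SB}(N,D)$, whose rational points over any field extension $K/F$ parametrize separable splittings of $D\otimes K$ of degree $N$, has an $F_\zeta$-point for each closed point $\zeta\in S$ and for each generic patch $F_U$ indexed by a component $U$ of $Y\setminus S$. Because $\textup{SB}(N,D)$ is projective homogeneous under $\textup{PGL}_{\deg D}$, HHK's local-global principle for projective homogeneous varieties over patching configurations (Theorem 5.1 of \cite{MR2545681}) applies and produces an $F$-rational point. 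This $F$-point furnishes the required separable splitting of $D$ over $F$ of degree $N$, yielding $\textup{ind}(D)\mid N$.

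The main obstacle is to verify that the completions $\{F_P\}_{P\in S}\cup\{F_U\}_{U}$ genuinely assemble into a patching configuration in the HHK sense, so that their local-global theorem can be applied verbatim. This is precisely why $S$ is enlarged beyond $S_0$: requiring $S$ to contain all intersection points of the components of $Y$, the support of the ramification divisor of $D$, and the finite set $Y\setminus(\cup U_\eta)$ guarantees that (i) the open patches $U$ can be chosen disjointly inside each irreducible component of $Y\setminus S$, covering the whole special fiber away from $S$; (ii) the ramification of $D$ is concentrated at points of $S$, so the local index computations over $F_P$ capture the full ramification datum and the algebras $D\otimes F_U$ are unramified at the chosen generic patches; and (iii) the branch fields $F_{P,U}$ attached to pairs $(P,U)$ with $P\in\overline{U}\cap S$ satisfy the factorization axiom required by HHK. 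Once these compatibilities are in place, the patching theorem applies essentially formally and, combined with the trivial divisibility, yields the claimed LCM formula.
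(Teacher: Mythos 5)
The paper does not prove this theorem at all---it imports it verbatim from \cite{MR2545681}*{Theorem 5.1} and \cite{MR3219517}*{Page 228}---and your sketch correctly reconstructs the argument given in those sources: the trivial divisibility $\lcm\{\textup{ind}(D\otimes F_\zeta)\}\mid \textup{ind}(D)$, and for the converse the generalized Severi--Brauer variety $\textup{SB}(N,D)$, which has a rational point over each patch $F_P$ and $F_U$ and hence over $F$ by the Harbater--Hartmann--Krashen local-global principle for projective homogeneous varieties under connected rational linear algebraic groups (here $\textup{PGL}_1(D)$), the verification of the patching/factorization axioms for the configuration $\{F_P\}_{P\in S}\cup\{F_U\}_U$ with branch fields $F_{P,U}$ being exactly as you describe. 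One correction: the local-global principle you invoke is HHK's Theorem 3.7 (a consequence of their simultaneous factorization results), not their Theorem 5.1---Theorem 5.1 of \cite{MR2545681} \emph{is} the lcm statement being proved, so your citation as written is circular, though the mathematics underneath is the right one.
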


We apply this theorem in our situation. First, suppose $\zeta=U$ for some irreducible component $U$ of $Y\setminus S$. Let $\eta$ be the generic point of $U$. Then $U\subset U_\eta$. Since $F_{U_\eta} \subset F_U$, $\text{ind}(D\otimes_F F_U)\mid \text{ind}(D\otimes_F F_{U_\eta})= \textup{ind}(D\otimes_F F_\eta)$.  Since the residue field of the generic point of $U$ is a function field of the curve over an algebraically closed field, by Theorem \ref{sc1}, we have $\text{ind}(D\otimes F_\eta)\vert p$. Hence, $\text{ind}(D\otimes_F F_U)\mid p$.

Next suppose $\zeta=P\in S$, where $P$ is a closed point of $\mathcal{X}$.
By the Cohen structure theorem for an equi-characteristic field \cite{stacks-project}*{\href{https://stacks.math.columbia.edu/tag/0C0S}{Tag 0C0S}}, we have 
\[
\hat{\mathcal{O}}_{\mathcal{X},P}\simeq k[[\pi,t]],
\]
where $\pi,t$ are local uniformizers at $P$. Notice that it is actually a $k$-algebra isomorphism, since the residue field $k$ is naturally embedded into the complete local ring. In general, the Cohen's structure theorem only provides a ring isomorphism instead of a $k$-algebra isomorphism.

To analyze the period-index problem for the field $F_P= k((\pi,t))$, we start with a Gersten-type theorem on the logarithmic de Rham-Witt cohomology of $k[[\pi,t]]$, which is analogous to the Artin-Mumford ramification sequence in the case of $p$-torsion Brauer groups over fields of characteristic $p>0$.

\begin{theorem}[Gersten-type theorem \cite{MR2396000}]
\label{Gersten}
Let $X$ be  the spectrum of a $2$-equidimensional regular local ring over $\mathbb{F}_p$ with the unique closed point $P$ and quotient field $K$. Then we have an exact sequence
\[
\xymatrix{
0\ar[r]& H^1(X,\Omega^1_{X,\text{log}})\ar[r]& H^1(K,\Omega^1_{K,\text{log}})\ar[r]^-{\delta_1}& \bigoplus\limits_{x\in X^1} H^2_x(X,\Omega^1_{X,\text{log}})\ar[r]^-{\delta_2}&  H^3_P(X,\Omega^1_{X,\text{log}})
\ar[r] & 0.
}
\]
\end{theorem}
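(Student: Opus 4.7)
The plan is to realize the stated four-term sequence as the Cousin complex arising from the coniveau filtration on the étale cohomology of $X$ with coefficients in the logarithmic de Rham--Witt sheaf $\Omega^{1}_{X,\log}$. First I would set up the coniveau spectral sequence
\[
E_{1}^{p,q} \;=\; \bigoplus_{x \in X^{p}} H^{p+q}_{x}\bigl(X,\, \Omega^{1}_{X,\log}\bigr) \;\Longrightarrow\; H^{p+q}\bigl(X,\, \Omega^{1}_{X,\log}\bigr),
\]
where $X^{p}$ denotes the set of codimension-$p$ points. Because $X$ is a $2$-equidimensional regular local scheme, only $p = 0, 1, 2$ contribute, corresponding to the generic point $\eta = \operatorname{Spec}(K)$, the codimension-one points of $X$, and the unique closed point $P$.

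Next I would invoke the Gersten-type resolution of $\Omega^{1}_{X,\log}$ on a regular local ring of characteristic $p$, which is the technical content of \cite{MR2396000}. This resolution asserts that the Cousin complex of étale sheaves
\[
0 \to \Omega^{1}_{X,\log} \to (i_{\eta})_{\ast}\Omega^{1}_{K,\log} \to \bigoplus_{x \in X^{1}} (i_{x})_{\ast}(\text{residue term at }x) \to (i_{P})_{\ast}(\text{residue term at }P) \to 0
\]
is an exact (flasque) resolution of $\Omega^{1}_{X,\log}$ on the small étale site of $X$. Consequently the coniveau spectral sequence degenerates in the range needed, and extracting the row of total degree $1$ produces precisely the claimed four-term exact sequence, where the right-hand endpoint is just $H^{3}_{P}(X,\Omega^{1}_{X,\log})$ because $P$ is the only codimension-$2$ point and there are no strata of higher codimension.

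The main obstacle is of course the Gersten conjecture itself for the sheaf $\Omega^{1}_{X,\log}$, which is the substantive input. In \cite{MR2396000} it is proved by combining the Bloch--Kato--Gabber identification of $\Omega^{j}_{X,\log}$ with Milnor $K$-theory modulo $p$, Gersten-type injectivity via transfer arguments that reduce to the henselian discrete valuation case, and an analysis of Kato's residue maps at the codimension-one points. Taking this resolution as input, the remainder of the argument is formal: the higher étale cohomologies of the skyscraper-like summands $(i_{x})_{\ast}(-)$ vanish in the relevant range because the étale $p$-cohomological dimension of each residue field is at most $1$, so the long exact sequence arising from the resolution collapses to exactly the four-term sequence in the statement.
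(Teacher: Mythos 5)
The paper offers no proof of this theorem beyond the attribution: it is imported wholesale from Shiho \cite{MR2396000}, and your proposal does essentially the same thing, deferring the substantive content (exactness of the Gersten/Cousin complex for $\Omega^1_{\log}$ over a regular local $\mathbb{F}_p$-algebra, which Shiho establishes via the vanishing of the local cohomology sheaves $\underline{H}^j_Z$ of $W_m\Omega^i$ and $W_m\Omega^i/dV^{m-1}\Omega^{i-1}$ in degrees $j\neq r$, together with the sequence $0\to\Omega^1_{\log}\to\Omega^1\xrightarrow{F-1}\Omega^1/d\mathcal{O}\to 0$) to that reference and reading off the stated four-term sequence as the degree-one row of the coniveau spectral sequence, which is the correct and standard extraction since $X$ has only codimension $0$, $1$, $2$ points. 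One caution: your closing justification via the étale $p$-cohomological dimension of the residue fields, and the description of the Cousin complex as a ``flasque'' resolution, are not quite right in the étale topology (the pushed-forward residue terms are not flasque there, and convergence of the coniveau spectral sequence is formal in any case, with the exactness of its $E_1$-row being precisely the imported Gersten statement); this does not affect correctness because everything substantive is carried by the citation, but the spectral-sequence formulation you set up first is the one to keep.
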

The morphisms in the Gersten-type sequence are induced by the connecting morphisms in the \'etale local cohomology. We are going to interpret the morphisms $\delta_1$ and $\delta_2$ concretely.
\subsection{The Morphism \texorpdfstring{$\delta_1$}{Lg}} 
$ $\\
Let $x\in X^1$. By the \'etale excision theorem \cite{milne2022}, we have the following lemma.
\begin{lemma}
\[ H^2_x(X,\Omega^1_{X,\textup{log}})=  H^2_x(X_x,\Omega^1_{\bullet,\textup{log}})\cong H^2_x(\mathcal{O}^h_{X,x},\Omega^1_{\bullet,\textup{log}}).
\]
\end{lemma}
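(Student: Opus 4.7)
The plan is to derive both identifications from étale excision for étale cohomology with support. Throughout, I take $H^2_x$ in the standard Bloch–Ogus sense: for a non-closed point $x \in X^1$, $H^2_x(X,\mathcal{F})$ denotes $H^2$ with support at the (now closed) image of $x$ in the localization $X_x = \textup{Spec}(\mathcal{O}_{X,x})$.

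For the first equality, since $\Omega^1_{X,\textup{log}}$ is defined on the small étale site and its formation commutes with the flat base change $X_x \to X$, the restriction of $\Omega^1_{X,\textup{log}}$ to $X_x$ agrees with $\Omega^1_{X_x,\textup{log}}$. Hence $H^2_x(X,\Omega^1_{X,\textup{log}}) = H^2_x(X_x,\Omega^1_{X_x,\textup{log}})$ is essentially unpacking the definition, with no content beyond the étale-local nature of the sheaf.

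For the second isomorphism, I would use that $\textup{Spec}(\mathcal{O}^h_{X,x})$ is the cofiltered limit of pointed étale neighborhoods $(U,u) \to (X_x,x)$ with $\kappa(u) = \kappa(x)$, all transition maps being affine. By the continuity of étale cohomology under such limits (SGA 4, Exp.~VII.5.7), and since taking sections with support in a closed subscheme commutes with this limit as well, one obtains
\[
H^2_x(\mathcal{O}^h_{X,x},\Omega^1_{\bullet,\textup{log}}) \;=\; \varinjlim_{(U,u)} H^2_u(U,\Omega^1_{U,\textup{log}}).
\]
For each étale neighborhood in this system, étale excision applied to $U \to X_x$ yields $H^2_u(U,\Omega^1_{U,\textup{log}}) \cong H^2_x(X_x,\Omega^1_{X_x,\textup{log}})$, since the preimage of $\{x\}$ is the single point $u$ with trivial residue-field extension. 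The colimit is therefore constant, and the isomorphism follows.

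The only technical obstacle is verifying that the logarithmic de Rham–Witt sheaf is genuinely compatible with the essentially étale pullbacks used in the henselization; but this is built into the setup of Section~\ref{rederham}, where $\Omega^1_{\textup{log}}$ is defined as a subsheaf on the small étale site, and the reference \cite{MR2396000} develops the theory precisely in this henselian étale-local framework. No further subtlety is expected beyond correctly invoking excision and the SGA 4 continuity statement.
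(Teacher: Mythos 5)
Your argument is correct and is essentially the paper's proof spelled out: the paper disposes of the first equality by étale (really Zariski) excision and cites Milne, \emph{Étale Cohomology}, III.1.28 for the second isomorphism, and your limit-over-étale-neighborhoods argument combined with excision for each neighborhood is precisely the proof of that cited corollary. The only point worth making explicit is that you should restrict to the cofinal system of étale neighborhoods $(U,u)$ with $\pi^{-1}(x)=\{u\}$ before invoking excision, but this is standard and does not affect the conclusion.
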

\begin{proof}
$ $\\
    The first isomorphism follows from the \'etale excision theorem. The second isomorphism follows from \cite{MR0559531}*{Corollary 1.28}. 
\end{proof}

Furthermore, we have the following commutative diagram:
\[
\xymatrix{
0\ar[r]& H^1(\mathcal{O}^h_{X,x},\Omega^1_{\bullet,\textup{log}}) \ar[r]\bijar[d] & H^1(K^h,\Omega^1_{\bullet,\textup{log}}) \ar[r]^-{\delta_1} \bijar[d] & H^2_x(\mathcal{O}^h_{X,x},\Omega^1_{\bullet,\textup{log}}) \ar[r]\ar@{.>}[d] &0\\
0\ar[r]& \text{Br}(\mathcal{O}^h_{X,x})[p]\ar[r] & \text{Br}(K^h)[p] \ar[r]^-{\delta_1'}  & \text{Br}(K^h)[p]/\text{Br}(\mathcal{O}^h_{X,x})[p] \ar[r] &0
}
\]
The first exact row comes from the long exact sequence in local cohomology in \'etale topology and the second exact row is the canonical exact sequence.

It follows that
\begin{equation}
\label{identdelta1}
H^2_x(X,\Omega^1_{X,\textup{log}})\cong \text{Br}(K^h)[p]\left/\text{Br}(\mathcal{O}^h_{X,x})[p]\right.,
\end{equation}
and we can identify the morphism $\delta_1$ with $\delta_1'$, i.e. $\delta_1=\delta_1'.$

\subsection{The Morphism \texorpdfstring{$\delta_2$}{Lg}} 
Let $y\in X^1$ and $Y$ be the closure of $y$ in $X$.
 \begin{lemma}[\cite{MR2396000}]
    Let $X,Z$ be regular schemes over $\mathbb{F}_p$ and let $i: Z\hookrightarrow X$ be a regular closed immersion of codimension $r$. Then we have $\underline{H}^j_Z(X,\mathcal{O}_X)=0,\underline{H}^j_Z(X,W_m\Omega^1_X)=0,\underline{H}^j_Z(X,W_m\Omega^i_X/dV^{m-1}\Omega^{i-1}_X)=0$ for $j\neq r$.
\end{lemma}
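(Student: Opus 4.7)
The plan is to reduce the vanishing statement to a local cohomology computation on strictly henselian stalks, where the ideal of $Z$ is generated by an explicit regular sequence, and then obtain the vanishing from a Čech upper bound paired with a depth-type lower bound that dévissage-reduces to level-one de Rham differentials.

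First I would exploit the fact that $\underline{H}^j_Z(X,-)$ is an étale sheaf supported on $Z$, so its vanishing can be checked on strict henselian stalks. For a geometric point $\bar{x}\in Z$, set $A=\mathcal{O}^{sh}_{X,\bar{x}}$; by the regular-immersion hypothesis the ideal of $Z$ in $A$ is generated by an $A$-regular sequence $f_1,\dots,f_r$. The extended Čech complex $\check{C}^{\bullet}(f_1,\dots,f_r;\mathcal{F})$ computes this local cohomology, and since it has only $r+1$ terms, the upper vanishing $\underline{H}^j_Z(X,\mathcal{F})=0$ for $j>r$ is automatic for every sheaf $\mathcal{F}$; no hypothesis on $\mathcal{F}$ is needed for this half.

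The vanishing for $j<r$ is equivalent to the assertion that $f_1,\dots,f_r$ is a regular sequence on the stalks of $\mathcal{F}$. For $\mathcal{F}=\mathcal{O}_X$ this is immediate from the Cohen-Macaulay property of the regular local ring $A$. For $\mathcal{F}=W_m\Omega^1_X$ and $\mathcal{F}=W_m\Omega^i_X/dV^{m-1}\Omega^{i-1}_X$ I would induct on $m$ using the canonical short exact sequence
\[
0 \to \mathrm{Fil}^{m-1} W_m\Omega^i_X \to W_m\Omega^i_X \xrightarrow{R} W_{m-1}\Omega^i_X \to 0
\]
attached to the restriction map, where $\mathrm{Fil}^{m-1} W_m\Omega^i_X = V^{m-1}\Omega^i_X + dV^{m-1}\Omega^{i-1}_X$ is an extension of quotients of $\Omega^i_X$ and $\Omega^{i-1}_X$ via $V^{m-1}$ and $dV^{m-1}$. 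Taking the long exact sequence of local cohomology and applying the inductive hypothesis reduces the step from $m-1$ to $m$ to the level-one statements for $\Omega^1_X$ and $\Omega^i_X/d\Omega^{i-1}_X$.

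The main obstacle will be the base case $m=1$: unlike $\mathcal{O}_X$, the coherent sheaves $\Omega^i_X$ on a general regular $\mathbb{F}_p$-scheme are not locally free, so the depth bound cannot be read off directly. The natural tool is the conormal exact sequence
\[
0 \to I/I^2 \to i^*\Omega^1_X \to \Omega^1_Z \to 0,
\]
together with its wedge powers, where the left-hand term is free of rank $r$ over $\mathcal{O}_Z$ because the immersion is regular. These produce a finite filtration on $i^*\Omega^i_X$ whose associated graded pieces are of the form $\bigwedge^s(I/I^2)\otimes \Omega^{i-s}_Z$, and a Koszul-style bookkeeping against this filtration (combined with the regularity of the sequence on $A$) delivers the required depth estimate on $\Omega^i_X$; the companion statement for $\Omega^i_X/d\Omega^{i-1}_X$ is then extracted by tracking $d$ through the same filtration and invoking the long exact sequences. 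This is the point at which the regular-immersion hypothesis is genuinely needed, rather than mere codimension.
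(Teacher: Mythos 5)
The paper offers no proof of this lemma: it is quoted directly from Shiho's article on logarithmic Hodge--Witt cohomology of regular schemes, so there is no in-paper argument to measure you against. Judged on its own terms, your architecture is the standard and correct one --- the extended \v{C}ech complex kills $j>r$ for any sheaf, the $j<r$ vanishing reduces to a regular-sequence (depth) assertion on stalks, and the passage from level $1$ to level $m$ is a d\'evissage through the filtration $\mathrm{Fil}^{m-1}W_m\Omega^i=V^{m-1}\Omega^i+dV^{m-1}\Omega^{i-1}$. The problem is the base case, which you correctly identify as the crux but then resolve by a method that cannot work.

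The assertion that $f_1,\dots,f_r$ is a regular sequence on $\Omega^i_A$ is equivalent, given that it is $A$-regular, to the vanishing of the higher Koszul homology $\mathrm{Tor}^A_{>0}(A/I,\Omega^i_A)$. The conormal sequence $0\to I/I^2\to i^*\Omega^1_X\to\Omega^1_Z\to 0$ and its wedge powers only describe $i^*\Omega^i_X=\Omega^i_A\otimes_A A/I$, i.e.\ $\mathrm{Tor}_0$; no filtration of $\mathrm{Tor}_0$ can force the vanishing of the higher $\mathrm{Tor}$'s, so the proposed ``Koszul-style bookkeeping'' does not deliver the depth estimate. The missing input is that for a regular $\mathbb{F}_p$-scheme $X$ the module $\Omega^1_{X/\mathbb{F}_p}$ is \emph{flat} over $\mathcal{O}_X$ (Andr\'e's theorem on localization of formal smoothness: since $\mathbb{F}_p$ is perfect, regularity of $A$ gives $L_{A/\mathbb{F}_p}\simeq\Omega^1_{A/\mathbb{F}_p}[0]$ with $\Omega^1$ flat). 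Flatness passes to exterior powers, an $A$-regular sequence is automatically regular on any flat module, and tensoring the \v{C}ech complex with a flat module commutes with cohomology, so $\underline{H}^j_Z(X,\Omega^i_X)\cong\underline{H}^j_Z(X,\mathcal{O}_X)\otimes_{\mathcal{O}_X}\Omega^i_X=0$ for $j\neq r$ with no conormal-sequence gymnastics. (Since $X$ and $Z$ are both regular, the immersion is automatically regular, so that hypothesis is not doing the work you attribute to it.) Two further soft spots: the graded pieces of the d\'evissage are $\Omega^i/B_n\Omega^i$ and $\Omega^{i-1}/Z_n\Omega^{i-1}$, whose identification requires the Cartier isomorphism for regular schemes and whose long exact sequences must be ordered carefully (a short exact sequence with two terms concentrated in degree $r$ a priori only confines the third to degrees $r-1$ and $r$); and the sheaf $W_m\Omega^i_X/dV^{m-1}\Omega^{i-1}_X$ needs its own d\'evissage, which you only gesture at.
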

\begin{corol}
    Let the notation be as above. Then we have $\underline{H}^j_Z(X,W_m\Omega^i_{X,\textup{log}})=0$ for $j\neq r,r+1$.
\end{corol}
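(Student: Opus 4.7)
The plan is to deduce the corollary from the preceding lemma via the fundamental short exact sequence of étale sheaves that defines the logarithmic de Rham–Witt sheaf. Recall that on the small étale site of a smooth scheme over $\mathbb{F}_p$ one has the exact sequence
\[
0 \longrightarrow W_m\Omega^i_{X,\textup{log}} \longrightarrow W_m\Omega^i_X \xrightarrow{\ F-I\ } W_m\Omega^i_X\bigl/dV^{m-1}\Omega^{i-1}_X \longrightarrow 0,
\]
which is the $i$-form analogue of the sequence the paper already uses in Section~\ref{logintro} for $i=1$ (the surjectivity of $F-I$ on the étale site is essentially the fact that an Artin–Schreier type equation has étale-local solutions).

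First I would apply the functor $\underline{H}^\bullet_Z(X,-)$ to this short exact sequence, producing the long exact sequence
\[
\cdots \longrightarrow \underline{H}^{j-1}_Z\!\bigl(X,W_m\Omega^i_X/dV^{m-1}\Omega^{i-1}_X\bigr) \longrightarrow \underline{H}^{j}_Z\!\bigl(X,W_m\Omega^i_{X,\textup{log}}\bigr) \longrightarrow \underline{H}^{j}_Z\!\bigl(X,W_m\Omega^i_X\bigr) \longrightarrow \cdots
\]
Next I would invoke the preceding lemma: both $\underline{H}^{j}_Z(X,W_m\Omega^i_X)$ and $\underline{H}^{j-1}_Z(X,W_m\Omega^i_X/dV^{m-1}\Omega^{i-1}_X)$ vanish as soon as $j\neq r$ and $j-1\neq r$, i.e.\ whenever $j\notin\{r,r+1\}$. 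Sandwiched between two zeros, the middle term $\underline{H}^{j}_Z(X,W_m\Omega^i_{X,\textup{log}})$ must itself vanish in that range, giving the claim.

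The main thing to verify carefully is the index bookkeeping: the preceding lemma kills $\underline{H}^j_Z$ of the non-logarithmic sheaves only in a single degree $j=r$, so the long exact sequence allows $\underline{H}^\bullet_Z$ of the logarithmic sheaf to survive in two adjacent degrees $r$ and $r+1$ (corresponding to the cokernel of $F-I$ in codimension $r$ and to its kernel shifted by one), but in no other degree. The only non-routine input is the surjectivity of $F-I$ on the étale site in arbitrary degree; since the paper invokes this implicitly for $i=1$ and the argument for general $i$ is identical (it is proved by solving a separable polynomial equation étale-locally), I would either cite it from \cite{MR565469} or remark that the short exact sequence is standard. No other obstacle arises.
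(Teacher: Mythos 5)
Your argument is correct and is exactly the dévissage the paper intends: the corollary is stated without an explicit proof, but the short exact sequence $0\to W_m\Omega^i_{X,\textup{log}}\to W_m\Omega^i_X\xrightarrow{F-I} W_m\Omega^i_X/dV^{m-1}\Omega^{i-1}_X\to 0$ together with the preceding lemma is precisely the mechanism the paper uses (it reappears as the columns of the large commutative diagram immediately afterwards). Your index bookkeeping, sandwiching $\underline{H}^j_Z(X,W_m\Omega^i_{X,\textup{log}})$ between a cokernel term in degree $j-1$ and a kernel term in degree $j$, is the right way to see why two adjacent degrees $r$ and $r+1$ survive.
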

Also we have the following exact diagram:
\begin{align}
\xymatrix{
&0\ar
[d]& 0\ar[d] & 0\ar[d]\\
0\ar[r]&{H}^1_{Y}(X,\Omega^1_{\bullet,\textup{log}})\ar[r]\ar[d]& {H}^1_{y}(X-\{P\},\Omega^1_{\bullet,\textup{log}})\ar[r]^-{\delta^y_{1}}\ar[d]& {H}^2_{P}(X,\Omega^1_{\bullet,\textup{log}})\ar[d]\ar[r] & 0\\
0\ar[r]&{H}^1_{Y}(X,\Omega^1_\bullet)\ar[r]\ar[d]^-{F-I}& {H}^1_{y}(X-\{P\},\Omega^1_\bullet)\ar[r]^-{\delta^y_{1}}\ar[d]^-{F-I}& {H}^2_{P}(X,\Omega^1_\bullet)\ar[d]^-{F-I}\ar[r] & 0\\
0\ar[r]&{H}^1_{Y}(X,\Omega^1_\bullet/ d\mathcal{O})\ar[r]\ar[d]& {H}^1_{y}(X-\{P\},\Omega^1_\bullet/ d\mathcal{O})\ar[r]^-{\delta^y_{1}}\ar[d]& {H}^2_{P}(X,\Omega^1_\bullet/ d\mathcal{O})\ar[d]\ar[r] & 0\\
{H}^2_{P}(X,\Omega^1_{\bullet,\textup{log}})\ar[r]&{H}^2_{Y}(X,\Omega^1_{\bullet,\textup{log}})\ar[r]\ar[d]& {H}^2_{y}(X-\{P\},\Omega^1_{\bullet,\textup{log}})\ar[r]^-{\delta^y_{2}}\ar[d]& {H}^3_{P}(X,\Omega^1_{\bullet,\textup{log}})\ar[d]\ar[r] & 0\\
&0& 0 & 0
}
\label{comm1}
\end{align}
Notice that we have 
${H}^j_y(X,\bullet)={H}^j_y(X-\{P\},\bullet)$ by excision. In the above diagram, we are using cohomology groups instead of cohomology sheaves, since $X$ is a strictly henselian local scheme.

In order to compute $\delta_2$ and $H^3_P(X,\Omega^1_{X,\textup{log}})$, recall the following facts about the (\'etale) local cohomology.
\begin{lemma}[\cite{stacks-project}*{\href{https://stacks.math.columbia.edu/tag/0G74}{Lemma 0G74}}]
   Let $(X,\mathcal{O}_X)$ be a ringed space. Let $Z\subset X$ be a closed subset. Let $K$ be an object of $D(\mathcal{O}_X)$ and denote $K_{\text{ab}}$ its image in $D(\underline{\mathbb{Z}}_X)$. Then there is a canonical map $R\Gamma_Z(X,K)\rightarrow R\Gamma_Z(X,K_{\text{ab}})$ in $D(\text{Ab})$.
\end{lemma}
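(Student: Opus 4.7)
This is a compatibility statement between derived section-with-support functors on two different sheaf categories linked by a forgetful functor, so the plan is purely formal homological algebra: build the map from a compatible pair of K-injective resolutions.

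First I would set up the underived picture. Let $U:\mathcal{O}_X\textup{-Mod}\to \underline{\mathbb{Z}}_X\textup{-Mod}$ be the forgetful functor; it is exact. For any $\mathcal{O}_X$-module $\mathcal{F}$, the subgroup $\Gamma_Z(X,\mathcal{F})\subset\Gamma(X,\mathcal{F})$ of sections supported on $Z$ depends only on the underlying sheaf of abelian groups. Hence if I further forget the $\mathcal{O}_X(X)$-module structure on global sections, the two underived functors $\Gamma_Z^{\mathcal{O}_X}$ and $\Gamma_Z^{\textup{ab}}\circ U$ agree as functors $\mathcal{O}_X\textup{-Mod}\to \textup{Ab}$. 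This is the starting commutative diagram I would derive.

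Second, I would choose a K-injective resolution $K\to I^\bullet$ in $D(\mathcal{O}_X)$, and then a K-injective resolution $U(I^\bullet)\to J^\bullet$ in $D(\underline{\mathbb{Z}}_X)$; existence in both Grothendieck abelian categories is standard (Spaltenstein). By definition $R\Gamma_Z(X,K)\cong\Gamma_Z(X,I^\bullet)$ and $R\Gamma_Z(X,K_{\textup{ab}})\cong\Gamma_Z(X,J^\bullet)$ in $D(\textup{Ab})$, where on the left we further apply the forgetful functor from $\mathcal{O}_X(X)$-modules to abelian groups. Applying $\Gamma_Z$ to the chosen resolution map $U(I^\bullet)\to J^\bullet$ and using the underived identification from the first step yields the desired morphism
\[
R\Gamma_Z(X,K)\longrightarrow R\Gamma_Z(X,K_{\textup{ab}})
\]
in $D(\textup{Ab})$.

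Third, I would check canonicity. Any two K-injective resolutions of $K$ in $D(\mathcal{O}_X)$ are homotopy equivalent, and any two K-injective resolutions of $U(I^\bullet)$ in $D(\underline{\mathbb{Z}}_X)$ are homotopy equivalent; since $\Gamma_Z$ sends homotopy equivalences to homotopy equivalences, the resulting map is well-defined up to unique isomorphism in $D(\textup{Ab})$. There is no genuine obstacle here, only bookkeeping: the one place where care is required is the existence of K-injective resolutions for unbounded complexes, which I would simply cite from the Stacks Project. Alternatively, one can describe the map via the universal property of right derived functors applied to the natural transformation $\Gamma_Z^{\mathcal{O}_X}\Rightarrow \Gamma_Z^{\textup{ab}}\circ U$, which gives the same morphism by the uniqueness part of the universal property and avoids choosing resolutions explicitly.
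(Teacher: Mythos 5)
Your proposal is correct; note that the paper itself gives no proof of this lemma, citing it directly from the Stacks Project (tag 0G74), and your construction via a K-injective resolution $K\to I^\bullet$ in $D(\mathcal{O}_X)$ followed by a K-injective resolution $U(I^\bullet)\to J^\bullet$ in $D(\underline{\mathbb{Z}}_X)$, using that $\Gamma_Z$ depends only on the underlying abelian sheaf, is exactly the standard argument given in that reference. The only cosmetic slip is the phrase ``well-defined up to unique isomorphism'': since the source and target are the fixed objects $R\Gamma_Z(X,K)$ and $R\Gamma_Z(X,K_{\text{ab}})$, the map is genuinely well-defined and natural in $K$, as your final remark via the universal property of derived functors already shows.
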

\begin{prop}[\cite{stacks-project}*{\href{https://stacks.math.columbia.edu/tag/0A46}{Lemma 0A46}}]
\label{comparlocal}
    Let $S$ be a scheme. Let $Z\subset S$ be a closed subscheme. Let $\mathcal{F}$ be a quasi-coherent $\mathcal{O}_S$-module and denote $\mathcal{F}^a$ the associated quasi-coherent sheaf on the small \'etale site of $S$. Then
    \[
    H^q_Z(S_{\text{Zar}},\mathcal{F})=H^q_Z(S,\mathcal{F}^a).
    \]
\end{prop}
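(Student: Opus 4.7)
The plan is to reduce the comparison of Zariski and \'etale local cohomology to Grothendieck's classical comparison theorem for the cohomology of quasi-coherent sheaves. First I would let $U = S \setminus Z$ denote the open complement and recall that for any abelian sheaf $\mathcal{G}$ on $S$ (Zariski or \'etale) one has a functorial long exact sequence
\[
\cdots \to H^q_Z(S, \mathcal{G}) \to H^q(S, \mathcal{G}) \to H^q(U, \mathcal{G}|_U) \to H^{q+1}_Z(S, \mathcal{G}) \to \cdots
\]
coming from the defining distinguished triangle $R\Gamma_Z \to R\Gamma \to R\Gamma(U, -)$. The morphism of sites $\epsilon \colon S_{\text{\'et}} \to S_{\text{Zar}}$ assembles the two instances of this long exact sequence, applied to $\mathcal{F}$ on the Zariski side and to $\mathcal{F}^a$ on the \'etale side, into a commuting ladder of abelian groups.

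Next I would invoke Grothendieck's classical theorem that for any quasi-coherent $\mathcal{O}_S$-module $\mathcal{F}$ the canonical map $H^q(S_{\text{Zar}}, \mathcal{F}) \to H^q(S_{\text{\'et}}, \mathcal{F}^a)$ is an isomorphism for every $q \geq 0$ (see \cite{MR0559531}). Because $U$ is itself a scheme and $\mathcal{F}|_U$ is again quasi-coherent, this comparison applies simultaneously over $S$ and over $U$, so two out of every three vertical arrows in the ladder are already known to be isomorphisms.

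A direct application of the five-lemma then forces the remaining vertical arrow
\[
H^q_Z(S_{\text{Zar}}, \mathcal{F}) \to H^q_Z(S, \mathcal{F}^a)
\]
to be an isomorphism as well, which is the claimed equality. The main obstacle will be verifying the commutativity of the ladder, namely checking that $\epsilon_*$ carries the Zariski distinguished triangle defining local cohomology to the corresponding \'etale triangle. This is a formal consequence of the fact that $\epsilon$ restricts to the analogous morphism of sites over the open subscheme $U$ and that the ``sections supported on $Z$'' functor is characterized on each site by the same universal property as the kernel of $\Gamma(S, -) \to \Gamma(U, (-)|_U)$, but the check has to be unpacked carefully before the five-lemma can be applied.
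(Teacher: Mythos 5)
Your argument is correct and is essentially the proof of the cited result (Stacks Project, Tag 0A46): compare the two local-cohomology long exact sequences for $S$ and $U=S\setminus Z$ via the morphism of sites, invoke Grothendieck's comparison $H^q(S_{\text{Zar}},\mathcal{F})\cong H^q(S_{\text{\'et}},\mathcal{F}^a)$ for quasi-coherent modules over $S$ and over $U$, and apply the five lemma. The paper itself states this proposition as a citation without proof, so there is nothing further to compare; the commutativity of the ladder you flag is indeed the only point requiring care, and it follows formally from defining the comparison map via the adjunction $\mathcal{F}\to R\epsilon_*\mathcal{F}^a$.
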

\begin{prop}
\label{etalesheafomega}
    For any \'etale morphism $f:X\rightarrow Y$, $f^*\Omega^1_Y\rightarrow \Omega^1_X$ is an isomorphism of $\mathcal{O}_X$-modules.
\end{prop}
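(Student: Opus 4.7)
The plan is to apply the first fundamental exact sequence of K\"ahler differentials to the composition $X \xrightarrow{f} Y \to \text{Spec}\,\mathbb{Z}$. This yields
\[ f^*\Omega^1_Y \xrightarrow{\alpha} \Omega^1_X \to \Omega^1_{X/Y} \to 0, \]
and since $f$ is \'etale and hence unramified, $\Omega^1_{X/Y}=0$, which already gives surjectivity of $\alpha$. The remaining task is injectivity.

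For injectivity I would reduce to an affine \'etale presentation. Locally, $f$ is given by a ring map $A \to B$ with $B = (A[t_1,\ldots,t_n]/(g_1,\ldots,g_n))_h$ and Jacobian $\det(\partial g_i/\partial t_j)$ a unit in $B$. I would then apply the conormal exact sequence to the surjection $A[t]_h \twoheadrightarrow B$ with kernel $I=(g_1,\ldots,g_n)$; invertibility of the Jacobian forces $I/I^2 \to \Omega^1_{A[t]_h/A}\otimes_{A[t]_h} B$ to be an isomorphism. Pushing this through the first fundamental sequence for the tower $A \to A[t]_h \to B$ decouples the differentials coming from $A$ from those coming from the $t_i$, leaving the short exact sequence
\[ 0 \to \Omega^1_A \otimes_A B \to \Omega^1_B \to \Omega^1_{B/A} \to 0, \]
which together with $\Omega^1_{B/A}=0$ yields the desired isomorphism. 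Globalizing over a cover by such affines gives the sheaf-level statement.

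A cleaner, more abstract alternative is to invoke the equivalence that \'etale morphisms are precisely smooth morphisms of relative dimension zero, together with the general fact that the first fundamental sequence is short exact and locally split whenever the right-hand morphism is smooth. The vanishing $\Omega^1_{X/Y}=0$ then immediately upgrades $\alpha$ to an isomorphism. The statement is classical, so I do not expect any real obstacle; the delicate point is injectivity, and the decisive input in either approach is Jacobian invertibility in the local structure of an \'etale map, which is exactly what makes the first fundamental sequence split.
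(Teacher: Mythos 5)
The paper states this proposition without proof---it is invoked as a standard fact (essentially from \cite{MR565469}) to conclude that $\Omega^1$ is a quasi-coherent sheaf on the small \'etale site---so there is no argument in the paper to compare yours against. Your proof is correct and is the standard one. Surjectivity from $\Omega^1_{X/Y}=0$ is immediate, and your injectivity argument goes through: in a standard \'etale presentation $B=(A[t_1,\dots,t_n]/(g_1,\dots,g_n))_h$, the conormal map $I/I^2\to\bigoplus_j B\,dt_j$ is a surjection from a module generated by $n$ elements onto a free module of rank $n$, hence an isomorphism, and since $\Omega^1_{A[t]}$ splits as $(\Omega^1_A\otimes_A A[t])\oplus\bigoplus_j A[t]\,dt_j$, the image of $I/I^2$ in $\Omega^1_{A[t]}\otimes B$ is the graph of a map $B^n\to\Omega^1_A\otimes_A B$, so the cokernel is canonically $\Omega^1_A\otimes_A B$. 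Your ``cleaner alternative'' (\'etale $=$ smooth of relative dimension $0$, plus left-exactness and local splitness of the first fundamental sequence for smooth morphisms) is equally valid and is really the same Jacobian computation packaged differently. One cosmetic remark: in the paper's context $\Omega^1_X$ is taken relative to the perfect base field $k$ of characteristic $p$ rather than over $\mathbb{Z}$; since $\Omega^1_{k/\mathbb{F}_p}=0$ for $k$ perfect the two agree, and in any case your argument is uniform in the base, so nothing changes.
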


By Proposition \ref{etalesheafomega}, we get $(\Omega^1_S)^a=\Omega^1_S$ on the small \'etale site of $S$. It is also known that $(\mathcal{O}_S)^a=\mathcal{O}_S \ (\text{or} \ G_a)$, where $G_a$ is the additive group.
Then by Proposition \ref{comparlocal}, the \'etale local cohomology groups of $X$ agree with  the Zariski local cohomology groups.

Now it suffices to calculate the \'etale local cohomology groups of $\Omega^1_X/d\mathcal{O}_X$. In fact, we have the following exact sequences on the small \'etale site of $X$
\begin{align}
&\xymatrix{
0\ar[r]&\mathcal{O}_X \ar[r]^-F& \mathcal{O}_X \ar[r]& d\mathcal{O}_X\ar[r] & 0,
}\\
\label{nonc}
&\xymatrix{
0\ar[r]&d\mathcal{O}_X \ar[r]& \Omega^1_X \ar[r]& \Omega^1_X/d\mathcal{O}_X\ar[r] & 0.
}
\end{align}

These sequences follow from the Cartier isomorphism \cite{MR2396000}*{Corollary 2.5}, since $X$ is affine regular and $F$-finite. Passing to the cohomology sequence of \ref{nonc}, we have the exact sequences
\begin{align}
\xymatrix{
& 0\ar[r] & H^1_Y(X,\mathcal{O}_X)/H^1_Y(X,\mathcal{O}_X)^p \ar[r]^-d & H^1_Y(X,\Omega^1_X)\ar[r]& H^1_Y(X,\Omega^1_X/d\mathcal{O}_X)\ar[r]& 0.
\\
& 0\ar[r] & H^2_P(X,\mathcal{O}_X)/H^2_P(X,\mathcal{O}_X)^p \ar[r]^-d & H^2_P(X,\Omega^1_X)\ar[r]& H^2_P(X,\Omega^1_X/d\mathcal{O}_X)\ar[r]& 0.
}
\label{cartier}
\end{align}
Using (\ref{cartier}), it suffices to calculate the local cohomology groups of $\Omega^1_X$ and $\mathcal{O}_X$. They are computed by the Cech complex in the below.
\begin{lemma}[\cite{stacks-project}*{\href{https://stacks.math.columbia.edu/tag/0A6R}{Lemma 0A6R}}]
\label{Koszul}
    Let $A$ be a noetherian ring and let $I=(f_1,\cdots,f_r)\subset A$ be an ideal. Set $Z=V(I)\subset \text{Spec}(A)$. Then
    \[
    R\Gamma_Z(A)\simeq (\xymatrix{A\ar[r]& \prod_{i_0}A_{f_{i_0}}\ar[r]&\cdots\ar[r]& A_{f_1\cdots f_r}})
    \]
    in $D(A)$. If $M$ is an $A$-module, then we have
    \[
    R\Gamma_Z(M)\simeq (\xymatrix{M\ar[r]& \prod_{i_0}M_{f_{i_0}}\ar[r]&\cdots\ar[r]& M_{f_1\cdots f_r}})
    \]
    in $D(A)$.
\end{lemma}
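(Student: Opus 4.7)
The plan is to recognize the target complex as the extended (stable) Čech complex
\[
\check{C}^\bullet_{\underline{f}}(M) := \check{C}^\bullet_{\underline{f}}(A) \otimes_A M, \qquad \check{C}^\bullet_{\underline{f}}(A) := \bigotimes_{i=1}^{r} \bigl(A \to A_{f_i}\bigr),
\]
the tensor product of the two-term complexes placed in degrees $0$ and $1$. Every term of $\check{C}^\bullet_{\underline{f}}(A)$ is a localization of $A$, hence $A$-flat; and the module statement specializes to the ring statement at $M=A$, so I focus on the general module case.

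Choose an injective resolution $M \to I^\bullet$, so that $R\Gamma_Z(M) = \Gamma_Z(I^\bullet)$. I build two quasi-isomorphisms through the totalization of the double complex $\check{C}^p_{\underline{f}}(I^q)$:
\[
\check{C}^\bullet_{\underline{f}}(M) \;\longrightarrow\; \mathrm{Tot}\bigl(\check{C}^\bullet_{\underline{f}}(I^\bullet)\bigr) \;\longleftarrow\; \Gamma_Z(I^\bullet).
\]
For the first map (induced by the augmentation $M \to I^\bullet$), filter by columns: flatness of $\check{C}^p_{\underline{f}}(A)$ implies $H^q\bigl(\check{C}^p_{\underline{f}}(I^\bullet)\bigr) = \check{C}^p_{\underline{f}}(M)$ for $q=0$ and vanishes otherwise, so the $E_1$-page is concentrated in one row and the map is an iso there. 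For the second map (induced by $\Gamma_Z(N) \hookrightarrow N = \check{C}^0_{\underline{f}}(N)$), filter by rows: the key claim below asserts $H^p\bigl(\check{C}^\bullet_{\underline{f}}(I^q)\bigr) = \Gamma_Z(I^q)$ for $p=0$ and vanishes otherwise, so the $E_1$-page is again concentrated in one column and the map is an iso there. Composing yields $R\Gamma_Z(M) \simeq \check{C}^\bullet_{\underline{f}}(M)$ in $D(A)$.

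The heart of the argument is the key claim that $\check{C}^\bullet_{\underline{f}}(J) \simeq \Gamma_Z(J)$ in degree $0$ for every injective $A$-module $J$. By the Matlis structure theorem (valid since $A$ is Noetherian) every injective decomposes as a direct sum of injective hulls $E(A/\mathfrak{p})$, and both $\Gamma_Z$ and the Čech construction commute with arbitrary direct sums, so it suffices to treat $J = E(A/\mathfrak{p})$. One uses the standard fact that $f \in A$ acts locally nilpotently on $E(A/\mathfrak{p})$ when $f \in \mathfrak{p}$ and bijectively when $f \notin \mathfrak{p}$. If $\mathfrak{p} \in V(I)$, every $J_{f_i}$ vanishes and $\check{C}^\bullet_{\underline{f}}(J)$ collapses to $J$ in degree $0$, matching $\Gamma_Z(J) = J$. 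If $\mathfrak{p} \notin V(I)$, pick any $f_j \notin \mathfrak{p}$: the two-term factor $(J \to J_{f_j})$ is the identity on $J$, hence split acyclic, and tensoring with the remaining flat factors preserves acyclicity, matching $\Gamma_Z(J) = 0$.

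I expect the main bookkeeping obstacle to be the clean assembly of the two spectral-sequence arguments controlling the double complex and verifying that the natural augmentation/inclusion maps really induce the isomorphisms on the $E_1$-page claimed above; once Matlis decomposition reduces the key claim to $J = E(A/\mathfrak{p})$, the remaining case analysis is immediate.
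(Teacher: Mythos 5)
The paper itself gives no proof of this lemma---it is quoted directly from the Stacks project (Tag 0A6R) and used as a black box---so the only available comparison is with the cited source. Your proof is correct and complete in outline: it is the classical argument (Grothendieck's local cohomology seminar; Bruns--Herzog, Theorem 3.5.6) identifying the displayed complex with the extended \v{C}ech complex $\check{C}^\bullet_{\underline{f}}(M)$, sandwiching $\mathrm{Tot}\bigl(\check{C}^p_{\underline{f}}(I^q)\bigr)$ between $\check{C}^\bullet_{\underline{f}}(M)$ and $\Gamma_Z(I^\bullet)$ via two first-quadrant spectral sequences, and reducing the key acyclicity claim for injectives to $J=E(A/\mathfrak{p})$ by Matlis' structure theorem. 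All the supporting facts check out in the noetherian setting: $\Gamma_Z$ commutes with direct sums because $\Gamma_Z=\operatorname{colim}_n\operatorname{Hom}_A(A/I^n,-)$ and each $A/I^n$ is finitely presented; every element of $E(A/\mathfrak{p})$ is $\mathfrak{p}$-power torsion, so $f\in\mathfrak{p}$ acts locally nilpotently and $E(A/\mathfrak{p})_f=0$, while $f_j\notin\mathfrak{p}$ acts bijectively, making the factor $J\to J_{f_j}$ an isomorphism, hence a contractible two-term complex---and tensoring preserves contractibility, so no flatness is even needed at that step; and the map out of $\Gamma_Z(I^\bullet)$ is a genuine chain map since a section killed by $I^n$ is killed by $f_i^n$ and so dies in every localization $I^q_{f_i}$. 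The Stacks project proves the tag by a genuinely different route: it identifies $R\Gamma_Z$ with derived $I$-power torsion and realizes the latter as a filtered colimit of Koszul (telescope) complexes $K^\bullet(f_1^n,\dots,f_r^n)$, which avoids the classification of injective modules, works for unbounded complexes, and isolates the noetherian hypothesis in the single comparison $\Gamma_Z=\Gamma_I$; your route is more elementary but uses noetherianness essentially in exactly the two places you flag (Matlis decomposition and commutation with direct sums). One definitional point to make explicit in a full write-up: your argument computes the right derived functor of the module-level torsion functor $\Gamma_I$, and equating this with the sections-with-supports functor $R\Gamma_Z$ on $\operatorname{Spec}(A)$---which is how the paper applies the lemma in its local cohomology computations over $k[[\pi,t]]$---is the standard noetherian comparison, implicit rather than proved in your sketch.
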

Recall that $X=\text{Spec}\ k\ [[\pi,t]]$, where $k$ is an algebraically closed field. Let $R=k\ [[\pi,t]]$. Then $\pi$ and $t$ are regular primes of $R$. Denote by $V(\pi),V(t)$ the closures of codimension $1$ points $(\pi)$ and $(t)$ respectively.

Then we have the following 
\begin{align*}
{H}^1_{V(t)}(X,\Omega^1_X)&\cong \Omega^1_{R[\frac{1}{t}]}\left/\Omega^1_R\right.,
\\
{H}^1_{(t)}(X,\Omega^1_X)&\cong {H}^1_{(t)}(D(\pi),\Omega^1_{D(\pi)}) \cong {
\Omega^1_{R[\frac{1}{\pi t}]}\left/\Omega^1_{R[\frac{1}{\pi}]}\right.},\\
{H}^2_P(X,\Omega^1_X)&\cong {\Omega^1_{R[\frac{1}{\pi t}]}\left/(\Omega^1_{R[\frac{1}{\pi}]}+\Omega^1_{R[\frac{1}{t}]})\right.}\\
{H}^1_{V(t)}(X,\mathcal{O}_X)&\cong {R[\frac{1}{t}]}\left/{R}\right..\\
{H}^1_{(t)}(X,\mathcal{O}^1_X)&\cong {H}^1_{(t)}(D(\pi),\mathcal{O}^1_{D(\pi)}) \cong {
{R[\frac{1}{\pi t}]}\left/{R[\frac{1}{\pi}]}\right.},\\
{H}^2_P(X,\mathcal{O}^1_X)&\cong {{R[\frac{1}{\pi t}]}\left/\Bigl({R[\frac{1}{\pi}]}+{R[\frac{1}{t}]}\Bigr)\right.}.
\end{align*}
Combining with the exact sequence (\ref{cartier}), it follows that
\begin{align*}
    H^2_{V(t)}(X,\Omega^1_{X,\text{log}})&\cong {\Omega^1_{R[\frac{1}{t}]}\left/ \Bigl(\Omega^1_{R}+(F-I)\Omega^1_{R[\frac{1}{t}]}+d(R[\frac{1}{t}])\Bigr)
    \right.}\\
    &\cong \text{Br}(R[\frac{1}{t}])[p]\left/\text{Br} (R)[p]\right.,\\
    H^2_{(t)}(X,\Omega^1_{X,\text{log}})&= {\Omega^1_{R[\frac{1}{\pi t}]}\left/ \Bigl(\Omega^1_{R[\frac{1}{\pi}]}+(F-I)\Omega^1_{R[\frac{1}{\pi t}]}+d(R[\frac{1}{\pi t}])\Bigr)
    \right.}\\
    &\cong \text{Br}( R[\frac{1}{\pi t}])[p]\left/\text{Br}(R[\frac{1}{\pi}])[p]\right.,\\
     H^3_{P}(X,\Omega^1_{X,\text{log}})&= {\Omega^1_{R[\frac{1}{\pi t}]}\left/ \Bigl(\Omega^1_{R[\frac{1}{\pi}]}+\Omega^1_{R[\frac{1}{t}]}+(F-I)\Omega^1_{R[\frac{1}{\pi t}]}+d(R[\frac{1}{\pi t}])\Bigr)
    \right.}\\
    &\cong \text{Br}( R[\frac{1}{\pi t}])[p]\left/\Bigl(\text{Br}( R[\frac{1}{\pi}])[p]+\text{Br}(R[\frac{1}{t}])[p]\Bigr)\right..
\end{align*}
Notice that we used the fact that the localization of a unique factorization domain (UFD) at a multiplicatively closed subset is also a UFD and the Picard group of a UFD is zero.
Furthermore, from the last row of (\ref{comm1}), we get the following identification
{\small
\begin{align}
\label{h3log}
\xymatrix{
\text{Br}(R[\frac{1}{t}])[p]\left/\text{Br} (R)[p]\right.\ar[r]& \text{Br}( R[\frac{1}{\pi t}])[p]\left/\text{Br}(R[\frac{1}{\pi}])[p]\right.\ar[r]^-{\delta_2^y}& \text{Br}( R[\frac{1}{\pi t}])[p]\left/\Bigl(\text{Br}( R[\frac{1}{\pi}])[p]+\text{Br}(R[\frac{1}{t}])[p]\Bigr)\right.\ar[r] & 0.
}
\end{align}
}
\subsection{Local Swan Conductor}
Recall that we have the $k$-algebra isomorphism 
$\hat{\mathcal{O}}_{\mathcal{X},P}\cong k[[\pi,t]]$. We have the commutative diagram
\[
\xymatrix{
&\text{Br}(F)[p]\cong H^1(F,\Omega^1_{F,\textup{log}})\ar[r]\ar[rd]  &H^1({\mathcal{O}}_{\mathcal{X},P},\Omega^1_{\bullet,\textup{log}})\ar[d]\ar[r] & H^2_{(t)}\left({\mathcal{O}}_{\mathcal{X},P},\Omega^1_{\bullet,\textup{log}}\right) \ar[d]\\
& &H^1(\hat{\mathcal{O}}_{\mathcal{X},P},\Omega^1_{\bullet,\textup{log}})\ar[r]&H^2_{(t)}\left(\hat{\mathcal{O}}_{\mathcal{X},P},\Omega^1_{\bullet,\textup{log}}\right) 
}
\]
where the horizontal row is part of the long exact sequence in local \'etale cohomology associated to the sheaf $\Omega^1_{\bullet, \textup{log}}$. By (\ref{identdelta1}), we have the following isomorhisms
\begin{align}
\label{h2s1}
&H^2_{(t)}({\mathcal{O}}_{\mathcal{X},P},\Omega^1_{\bullet,\textup{log}})\cong \text{Br}(\text{Frac}(({\mathcal{O}}_{\mathcal{X},{P}})^h_{(t)}))[p]/\text{Br}( ({\mathcal{O}}_{\mathcal{X},{P}})^h_{(t)})[p],\\
\label{h2s2}
&H^2_{(t)}(\hat{\mathcal{O}}_{\mathcal{X},P},\Omega^1_{\bullet,\textup{log}})\cong \text{Br}( \text{Frac}((\hat{\mathcal{O}}_{\mathcal{X},P})^h_{(t)}))[p]/\text{Br}( (\hat{\mathcal{O}}_{\mathcal{X},P})^h_{(t)})[p].
\end{align}
Notice that, for a prime ideal $\mathfrak{p}$ of a ring $R$, we denote by $R_\mathfrak{p}^h$ the henselization of the localization $R_\mathfrak{p}$ of the ring $R$ at the prime ideal $\mathfrak{p}$.

Using the diagram above, we can give a result which relates the global Swan conductor to the local cohomology groups  as in (\ref{h2s1}) and (\ref{h2s2}).
\begin{prop}
\label{localest}
Let $X$ be an algebraic curve over $k((t))$ with a smooth integral model $\mathcal{X}\rightarrow \text{Spec}\ k[[t]]$ and $\omega\in \textup{Br}(X)[p]$. Then
\[
\textup{sw}_{\text{Frac}((\hat{\mathcal{O}}_{\mathcal{X},P})^h_{(t)})}(\omega)=\textup{sw}_{\text{Frac}(({\mathcal{O}}_{\mathcal{X},P})^h_{(t)})}(\omega)=\textup{sw}_\mathcal{X}(\omega).
\]
\end{prop}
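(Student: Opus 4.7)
The plan is to exploit the common uniformizer $t$ across the three discretely valued fields $F = k(X)$, $F_1 = \textup{Frac}((\mathcal{O}_{\mathcal{X},P})^h_{(t)})$, and $F_2 = \textup{Frac}((\hat{\mathcal{O}}_{\mathcal{X},P})^h_{(t)})$, and compare them one transition at a time. A first observation is that $(\mathcal{O}_{\mathcal{X},P})_{(t)} = \mathcal{O}_{\mathcal{X},\eta_Y}$ (localizations commute), so $F_1$ is precisely the henselization of $F$ with respect to $v_Y$. All three valuations have $t$ as a uniformizer; the residue fields are $k(Y)$ for $F$ and $F_1$, and its $P$-adic completion $k((\pi))$ for $F_2$.

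For each transition I aim to show that the induced map on the graded pieces $M_i/M_{i-1}$ of Kato's filtration (Theorem \ref{sbr}) is injective. Granting this, a descending induction on $i$, starting from some large $N$ with $\omega \in M_N$, yields that $\omega$ lies in $M_i$ relative to the smaller field if and only if it does relative to the larger one, and hence the Swan conductors agree: in the inductive step, if the image of $\omega$ in the larger field lies in $M_{i-1}$ while $\omega \in M_i \setminus M_{i-1}$ in the smaller field, the nonzero class in $M_i/M_{i-1}$ would map nontrivially by injectivity, giving a contradiction.

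For the transition $F \to F_1$, both graded pieces are identified with the same object: $\Omega^1_{k(Y)}$ when $p \nmid i$, or $k(Y)/k(Y)^p$ when $p \mid i$ (using that $\Omega^1_{k(Y)}/Z^1_{k(Y)} = 0$ under the hypothesis $[k(Y):k(Y)^p] = p$). Since the identification of Theorem \ref{sbr} is given by the same recipe $\bar{a}\, d\log \bar{b} \mapsto (a/t^i)\, d\log b$ using the common uniformizer $t$, the induced map is the identity. For the transition $F_1 \to F_2$, the graded map is induced by the inclusion $k(Y) \hookrightarrow k((\pi))$. Since $\pi \in k(Y)$ satisfies $\pi \notin k(Y)^p$ (else $v_P(\pi) = p\, v_P(f)$ for some $f$, which is impossible), $\pi$ is simultaneously a $p$-basis of $k(Y)$ and of $k((\pi))$; thus $d\pi$ generates both $\Omega^1_{k(Y)}$ and $\Omega^1_{k((\pi))}$, and the map $f\, d\pi \mapsto f\, d\pi$ is injective. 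For the wild tier, I use the direct-sum decompositions $k(Y) = \bigoplus_{i=0}^{p-1} k(Y)^p \pi^i$ and $k((\pi)) = \bigoplus_{i=0}^{p-1} k((\pi))^p \pi^i$: if $f = \sum_i a_i \pi^i \in k(Y)$ lies in $k((\pi))^p$, then by linear independence in the second decomposition all $a_i$ with $i > 0$ vanish, so $f = a_0 \in k(Y)^p$, giving $k(Y) \cap k((\pi))^p = k(Y)^p$ and hence $k(Y)/k(Y)^p \hookrightarrow k((\pi))/k((\pi))^p$.

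The main technical subtlety I anticipate is verifying that the natural maps on the filtration quotients really correspond, under the Theorem \ref{sbr} identifications, to the naive inclusion-induced maps on $\Omega^1$ and on $k/k^p$ described above; this is a bookkeeping matter traced through the recipe of Theorem \ref{totarogeneral} using that the same uniformizer $t$ is used throughout, rather than a deep obstruction.
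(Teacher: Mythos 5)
Your argument is correct and lands on the same reduction as the paper, but it opens up the black box that the paper cites. The paper's proof is very short: the second equality is read off from Definition \ref{globalsw}, and the first is obtained by invoking Kato's invariance lemma (Lemma \ref{swchange}) for the pair $K=\text{Frac}((\mathcal{O}_{\mathcal{X},P})^h_{(t)})\subset L=\text{Frac}((\hat{\mathcal{O}}_{\mathcal{X},P})^h_{(t)})$, so that the only thing actually verified is that the residue extension $k(Y)\to k((\pi))$ is separable (Lemma \ref{comsep}). What you do instead is reprove the relevant case of Lemma \ref{swchange}: you push the restriction map through the graded pieces $M_i/M_{i-1}$ of Theorem \ref{sbr} and reduce to injectivity of $\Omega^1_{k(Y)}\to\Omega^1_{k((\pi))}$ on the tiers with $p\nmid i$ and of $k(Y)/k(Y)^p\to k((\pi))/k((\pi))^p$ on the tiers with $p\mid i$, both of which follow from $\pi$ being simultaneously a $p$-basis of $k(Y)$ and of $k((\pi))$. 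Your computation $k(Y)\cap k((\pi))^p=k(Y)^p$ via the decomposition $\bigoplus_{i=0}^{p-1}(\cdot)^p\pi^i$ is exactly the arithmetic core of the paper's Lemma \ref{comsep}, so the two proofs agree where it matters; what your version buys is self-containedness (no appeal to \cite{MR991978}*{Lemma 6.2}) and a more explicit treatment of the henselization step $F\to F_1$, which the paper passes over as immediate from the definition. The compatibility you flag as the main subtlety is indeed only bookkeeping: since $t$ is a common uniformizer and any lifts $a\in\mathcal{O}_{F_1}$, $b\in\mathcal{O}_{F_1}^\times$ of $\bar a\in k(Y)$, $\bar b\in k(Y)^\times$ also serve as lifts in $\mathcal{O}_{F_2}$, the comparison square between the identifications of Theorem \ref{totarogeneral} and the functorial maps on $\Omega^1$ and $k/k^p$ commutes on the generators $\bar a\,d\bar b/\bar b$, which span each graded piece. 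One small streamlining: no descending induction is needed; injectivity of the single graded map at $i=\textup{sw}(\omega)$, together with the trivial inclusion $M_i^{K}\to M_i^{L}$ giving $\textup{sw}_L\le\textup{sw}_K$, already forces equality.
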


\begin{proof}[Proof of Proposition \ref{localest}]
$ $\\
The second equality follows from Definition \ref{globalsw}. For the first one, by Lemma \ref{swchange}, we can take $K=\text{Frac}(({\mathcal{O}}_{\mathcal{X},P})^h_{(t)}))$ and $L=\text{Frac}((\hat{\mathcal{O}}_{\mathcal{X},P})^h_{(t)})$. Then it suffices to show that the residue field extension is separable, since $t$ is the uniformizer in both fields. The residue field extension is given by $k(\pi)\rightarrow k((\pi))$, which is a completion morphism. The separability is given by Lemma \ref{comsep}. \qedhere
\end{proof}
\begin{lemma}[\cite{MR991978}*{Lemma 6.2, Page 119}]
\label{swchange}
Let $K\subset L$ be two henselian discretely valued fields such that $\mathcal{O}_K\subset \mathcal{O}_L$ and $m_L=\mathcal{O}_L m_K$. Assume that the residue field of $L$ is separable over the residue field of $K$. Then, for any $\omega\in \text{Br}(K)[p]$, we have
\[
\textup{sw}_K(\omega)=\textup{sw}_L(\omega).
\]
\end{lemma}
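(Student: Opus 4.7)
The hypothesis $m_L = \mathcal{O}_L m_K$ says precisely that the ramification index $e(L/K)$ equals $1$, so any uniformizer $\pi$ of $K$ remains a uniformizer of $L$, and $v_L$ restricted to $K$ coincides with $v_K$. The plan is to fix such a common uniformizer and compare the two Kato filtrations $\{M_j^K\}$ on $\mathrm{Br}(K)[p]$ and $\{M_j^L\}$ on $\mathrm{Br}(L)[p]$ from Definition \ref{defsw}, interpreting everything through the graded pieces provided by Theorem \ref{totarogeneral} (equivalently Theorem \ref{sbr}).

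First I would dispatch the easy inequality $\mathrm{sw}_L(\omega) \leq \mathrm{sw}_K(\omega)$. If $\omega \in \mathrm{Br}(K)[p]$ is represented over $K$ by a sum of forms $a\,\mathrm{dlog}(b)$ with $v_K(a) \geq -j$, then since $v_L(a) = v_K(a)$ and $v_L(b) = v_K(b)$, the same representatives viewed in $\mathrm{Br}(L)[p]$ lie in $M_j^L$. Hence pullback along $K \hookrightarrow L$ sends $M_j^K$ into $M_j^L$, giving the inequality tautologically.

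The substantive step is the reverse inequality, which reduces to proving that the induced map on graded pieces
\begin{equation*}
 \mathrm{gr}_j^K \;:=\; M_j^K/M_{j-1}^K \;\longrightarrow\; M_j^L/M_{j-1}^L \;=:\; \mathrm{gr}_j^L
\end{equation*}
is injective for every $j \geq 0$. Granting this, if $\omega \in M_j^K$ and $\omega \notin M_{j-1}^K$, then its nonzero image in $\mathrm{gr}_j^K$ remains nonzero in $\mathrm{gr}_j^L$, so $\omega \notin M_{j-1}^L$; combined with Step 1 this forces $\mathrm{sw}_K(\omega) = \mathrm{sw}_L(\omega)$. To identify these maps concretely I would invoke Theorem \ref{totarogeneral} (respectively Theorem \ref{sbr}) with the common uniformizer $\pi$: for $j>0$ with $p \nmid j$ the map $\mathrm{gr}_j^K \to \mathrm{gr}_j^L$ is the natural map $\Omega^1_{\bar{k}_K} \to \Omega^1_{\bar{k}_L}$ induced by $\bar{k}_K \hookrightarrow \bar{k}_L$; for $j>0$ with $p\mid j$ it is the direct sum map $\Omega^1_{\bar{k}_K}/Z^1_{\bar{k}_K} \oplus \bar{k}_K/\bar{k}_K^{\,p} \to \Omega^1_{\bar{k}_L}/Z^1_{\bar{k}_L} \oplus \bar{k}_L/\bar{k}_L^{\,p}$; and for $j=0$ the tame residue sequence of Theorem \ref{sbr} reduces matters to injectivity of $\mathrm{Br}(\bar{k}_K)[p] \to \mathrm{Br}(\bar{k}_L)[p]$ and $H^1(\bar{k}_K,\mathbb{Z}/p) \to H^1(\bar{k}_L,\mathbb{Z}/p)$.

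The main obstacle, and the only place where the separability hypothesis on the residue extension is used, is verifying injectivity of each of these residue-level maps. For $\Omega^1_{\bar{k}_K} \to \Omega^1_{\bar{k}_L}$ this follows from the standard fact that separability makes $\bar{k}_L \otimes_{\bar{k}_K} \Omega^1_{\bar{k}_K} \to \Omega^1_{\bar{k}_L}$ injective (a $p$-basis of $\bar{k}_K$ extends to a $p$-independent set in $\bar{k}_L$). For $\bar{k}_K/\bar{k}_K^{\,p} \to \bar{k}_L/\bar{k}_L^{\,p}$ one uses that separability gives $\bar{k}_K \cap \bar{k}_L^{\,p} = \bar{k}_K^{\,p}$, since a $p$-th root inside a separable extension must already lie in the base. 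Injectivity on $\Omega^1/Z^1$ then follows from the Cartier isomorphism and the injectivity on $\Omega^1$. The cases $j=0$ (Brauer group and Artin--Schreier cohomology) reduce by Hilbert 90 and Galois cohomology of $\mathbb{Z}/p$ to the analogous separability-based statements. Once every graded map is injective, a short diagram chase along the filtration completes the proof.
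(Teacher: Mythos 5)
The paper does not actually prove this lemma; it is imported verbatim as a citation to Kato (\cite{MR991978}, Lemma 6.2), so your argument has to be judged on its own merits rather than against an in-paper proof. Your overall strategy --- the trivial inequality $\mathrm{sw}_L(\omega)\leq \mathrm{sw}_K(\omega)$ from the common uniformizer, followed by injectivity of the induced maps on the graded pieces of Kato's filtration, identified via Theorem \ref{totarogeneral} with maps of residue-field invariants --- is the standard route and is essentially Kato's. For $j\geq 1$ your verifications are sound: by MacLane's criterion a separable residue extension $\bar{k}_L/\bar{k}_K$ keeps $p$-independent sets $p$-independent, which gives injectivity of $\Omega^1_{\bar{k}_K}\to\Omega^1_{\bar{k}_L}$ (and of $\Omega^2$, which is what you really need to see that closedness is reflected, so that $\Omega^1_{\bar{k}_K}/Z^1_{\bar{k}_K}\to\Omega^1_{\bar{k}_L}/Z^1_{\bar{k}_L}$ is injective), and linear disjointness from $\bar{k}_K^{1/p}$ gives $\bar{k}_K\cap\bar{k}_L^{\,p}=\bar{k}_K^{\,p}$, hence injectivity of $\bar{k}_K/\bar{k}_K^{\,p}\to\bar{k}_L/\bar{k}_L^{\,p}$.

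The one genuine error is your treatment of $j=0$. Separability of the residue extension does \emph{not} make $\mathrm{Br}(\bar{k}_K)[p]\to\mathrm{Br}(\bar{k}_L)[p]$ or $H^1(\bar{k}_K,\mathbb{Z}/p)\to H^1(\bar{k}_L,\mathbb{Z}/p)$ injective: the Artin--Schreier splitting field of a $p$-symbol algebra $[a,b)$ is a separable extension that kills the class, and likewise $a$ dies in $H^1$ after adjoining $\wp^{-1}(a)$. So the graded map at level $0$ can fail to be injective, and the claimed ``reduction by Hilbert 90'' is false. Fortunately this step is superfluous: the Swan conductor is by definition nonnegative, so if $\mathrm{sw}_K(\omega)=0$ then the easy inequality already forces $\mathrm{sw}_L(\omega)=0$, and injectivity of the graded maps is only ever needed at level $j=\mathrm{sw}_K(\omega)\geq 1$. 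With the $j=0$ paragraph deleted and replaced by this one-line observation, your proof is correct.
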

\begin{lemma}
\label{comsep}
Let $F$ be a discretely valued field with $[F:F^p]=p$ and $\hat{F}$ be its completion. Then the completion morphism $F\rightarrow \hat{F}$ is separable.
\end{lemma}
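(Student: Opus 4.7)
The plan is to invoke MacLane's criterion: a field extension $L/K$ in characteristic $p$ is separable iff every $p$-basis of $K$ remains $p$-independent in $L$. So I need to exhibit a $p$-basis of $F$ that stays $p$-independent after completion.

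Let $\pi \in F$ be a uniformizer. Since $v(\pi) = 1$ and $v(F^p) \subseteq p\mathbb{Z}$, we have $\pi \notin F^p$, and the hypothesis $[F:F^p] = p$ then makes $\{\pi\}$ a $p$-basis of $F$; equivalently, $F = F^p \oplus F^p\pi \oplus \cdots \oplus F^p\pi^{p-1}$ as $F^p$-vector spaces. To identify the residue field $k$, I would take an arbitrary $a \in \mathcal{O}_F$, write $a = \sum_{i=0}^{p-1} c_i^p\pi^i$ with $c_i \in F$, and note that the valuations $v(c_i^p\pi^i)=pv(c_i)+i$ lie in pairwise distinct classes modulo $p$. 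The ultrametric equality then forces $v(a) = \min_i(pv(c_i)+i) \geq 0$, so each $c_i \in \mathcal{O}_F$; reducing modulo the maximal ideal yields $\bar a = \bar c_0^p$, so $k$ is perfect.

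Passing to $\hat F$: the residue field and the image of $\pi$ are unchanged, so $\pi \notin \hat F^p$. Being a complete (hence henselian) discretely valued field with perfect residue field, $\hat F$ has $p$-rank $[k:k^p]\cdot p = p$, and therefore $\{\pi\}$ is also a $p$-basis of $\hat F$ and in particular $p$-independent there. MacLane's criterion now yields separability of $\hat F/F$. The one substantive step is the deduction that $k$ is perfect from the hypothesis $[F:F^p]=p$ without assuming $F$ is henselian; the valuation-theoretic computation above is where care is required, since on a non-henselian discretely valued field one might a priori worry that the formula $[F:F^p]=[k:k^p]\cdot p$ could fail.
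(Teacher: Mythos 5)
Your proof is correct, but it takes a genuinely different route from the paper's. The paper argues by contradiction: it supposes there is a non-separable algebraic subextension $E/F$ inside $\hat{F}$, factors it as separable followed by purely inseparable, and derives a contradiction from the expansion $a^p=\sum_{i=0}^{p-1} f_i^p\pi^i$ together with $\pi$ being a uniformizer of $\hat{F}$. You instead verify MacLane's linear-disjointness criterion directly, by showing that $\{\pi\}$ is simultaneously a $p$-basis of $F$ and of $\hat{F}$; the new ingredient on your side is the valuation computation showing that $[F:F^p]=p$ already forces the residue field $k$ to be perfect, after which $[\hat{F}:\hat{F}^p]=p\cdot[k:k^p]=p$ follows from the structure of complete discretely valued fields. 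Your version buys two things: it makes explicit the perfectness of $k$, and it delivers separability in the full linear-disjointness sense, which is the form actually consumed by Lemma \ref{swchange} for the residue field extension $k(\pi)\to k((\pi))$; the paper's reduction to algebraic subextensions is also legitimate, but only because in $p$-rank one the failure of linear disjointness of $F$ and $\hat{F}^p$ over $F^p$ is equivalent to $\pi^{1/p}\in\hat{F}$, i.e.\ is always witnessed by a degree-$p$ purely inseparable algebraic subextension --- a point the paper leaves implicit. Both arguments ultimately rest on the same ultrametric computation with $\sum_i c_i^p\pi^i$. One cosmetic remark: with the paper's Definition of $p$-rank (the exponent $n$ in $[K:K^p]=p^n$), you should say $\hat{F}$ has $p$-rank $1$, i.e.\ $[\hat{F}:\hat{F}^p]=p$, rather than ``$p$-rank $p$''; this does not affect the argument.
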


\begin{proof}
$ $\\
We prove it by contradiction. Suppose that there exists an algebraic extension $E/F$ inside $\hat{F}$ which is not separable. Then we can decompose $E/F$ as a chain of field extensions $E/L/F$ where $L$ is separable over $F$ and $E/L$ is purely inseparable. Moreover, let $\pi$ be a uniformizer of $F$. Then we have that $\pi$ is still a uniformizer in $L$, and it gives a $p$-basis of $L/L^p$. Since $E/L$ is purely inseparable, there exists $a\in E\subset\hat{F}$ such that $a^p\in L\setminus L^p$. Let $b = a^p$. It follows that $b=\sum\limits^{p-1}_{i=0}f_i^p\pi^i$ in $L$ (also in $\hat{F}$) such that $f_i\neq 0$ for some $i > 0$. However, notice that $\pi$ is a uniformizer of $\hat{F}$. Therefore, it implies that $b$ is not a $p$-power in $\hat{F}$, which is a contradiction. Hence the conclusion follows.
\end{proof}

\subsection{Period-Index Problems at Closed Points}
\begin{theorem}
 Let $R=k[[\pi,t]]$, $X=\textup{Spec}(R)$, $K=\textup{Frac}(R)$ and $\omega\in \textup{Br}(K)[p]$ which ramifies only along $(t)$ with $\textup{sw}_{K,{(t)}}(\omega)=m< p$. Then $\omega=[*,\pi)$.
 \label{localperind}
\end{theorem}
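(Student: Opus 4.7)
My plan is to pass to the $(t)$-adic completion and invoke the structure theorem for $p$-torsion Brauer classes. Let $F = \hat K_{(t)}$; by Cohen's structure theorem $F \cong k((\pi))((t))$, and its residue field $k((\pi))$ satisfies $[k((\pi)) : k((\pi))^p] = p$ (with $\pi$ as a $p$-basis element) and $\textup{Br}(k((\pi)))[p] = 0$ (since $k((\pi))$ is a $C_1$-field as $k$ is algebraically closed). Hence Theorem \ref{sc1} applies and classifies $\omega \in \textup{Br}(F)[p]$ as a symbol algebra of one of forms (i), (ii), or (iii), with Swan conductors (with respect to $v_t$) equal to $0$, a positive multiple of $p$, and some $n \geq 1$ with $p \nmid n$, respectively. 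The hypothesis $\textup{sw}_{K,(t)}(\omega) = m < p$ immediately rules out form (ii), leaving form (i) or (iii).

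Next I plan to rewrite the symbol so that the second slot is $\pi$. By Theorem \ref{sbr}, for $0 < j < p$ with $p \nmid j$ the graded piece $M_j/M_{j-1}$ is isomorphic to $\Omega^1_{k((\pi))}$, which is one-dimensional over $k((\pi))$ with basis $d\pi$ (since $\pi$ is a $p$-basis element). Consequently any class in $M_j$ is congruent modulo $M_{j-1}$ to some $[a_j/t^j,\pi)$ with $a_j \in k((\pi)) \subset K$. Starting from $\omega \in M_m$ in form (iii) (so $m \geq 1$), I iterate: at each level $j = m, m-1, \dots, 1$, subtract off $[a_j/t^j,\pi)$ so that the difference drops into $M_{j-1}$. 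After $m$ steps, additivity in the first slot of the Artin-Schreier symbol collects the $a_j/t^j$ into a single $\sigma = \sum_j a_j/t^j \in K$, and $\omega - [\sigma,\pi)$ lies in $M_0$.

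The hardest step, and main obstacle, is absorbing this residual tame class in $M_0$ (and analogously handling form (i), where $\omega$ is already tame). The delicate point is that for $a \in \mathcal{O}_F$ the class $[a,\pi)$ is unramified in $F$ and vanishes there (since $\textup{Br}(k((\pi)))[p] = 0$), so a nontrivial tame residue of $\omega$ at $(t)$ cannot be matched by any $[*,\pi)$ with $* \in \mathcal{O}_F$. To bridge this gap I would exploit the global hypothesis that $\omega \in \textup{Br}(K)[p]$ ramifies only along $(t)$ in $\textup{Spec}(R)$, together with the Gersten-type exact sequence (Theorem \ref{Gersten}) and the Swan-conductor invariance of Proposition \ref{localest}. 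The combination should force the tame residual to be absorbable into $[*,\pi)$ for a suitable $* \in K$, by invoking bilinearity over $K$ and the rigid global constraints that the purely local analysis at $(t)$ does not see.
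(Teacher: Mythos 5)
Your steps 1--3 (completing at $(t)$, invoking Theorem \ref{sc1}, ruling out form (ii) via $m<p$, and peeling off the wild part level by level using $M_j/M_{j-1}\cong\Omega^1_{k((\pi))}=k((\pi))\,d\pi$) track the paper's local analysis. But the step you flag as the main obstacle is a genuine gap, and the target you set for it is the wrong one. A class in $M_0$ with nontrivial residue in $H^1(k((\pi)),\mathbb{Z}/p)\cong k((\pi))/\mathcal{P}(k((\pi)))$ can \emph{never} equal $[*,\pi)$ over $F=\hat{K}_{(t)}$ for \emph{any} $*\in F$, not just $*\in\mathcal{O}_F$: the symbols $[a,\pi)$ are all split by the unramified extension $F(\pi^{1/p})$ whose residue extension is purely inseparable, so their tame residues vanish, whereas $[a,\pi)$ with a $t$-pole is (after reduction) either wild or trivial. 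So no global rigidity will "absorb'' a nonzero tame residual into $[*,\pi)$; what the global hypothesis actually does is force that residual to be \emph{zero}. The paper's mechanism, which your proposal is missing, is: run the Gersten sequence first. Since $\omega$ ramifies only along $(t)$ and $\delta_2\circ\delta_1=0$, the identification (\ref{h3log}) together with $\textup{Br}(R)[p]\cong\textup{Br}(k)[p]=0$ shows $\omega$ is represented by a class from $\textup{Br}(R[\tfrac{1}{t}])[p]$; expanding in powers of $t$ and using $d\pi=\pi\,\dlog{\pi}$, one gets $\omega=\sum_{i\geq 1}[f_i/t^i,\pi)+[f_0,t)$ with all $f_i\in\pi\cdot k[[\pi]]$. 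Then $f_0\in k[[\pi]]$, and since $k$ is algebraically closed, Hensel's lemma solves $x^p-x=f_0$ in $k[[\pi]]$, so $[f_0,t)=0$ and $\omega=[\sum_i f_i/t^i,\pi)$.

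A secondary problem with your ordering: choosing the $a_j$ merely in $k((\pi))$ and subtracting $[\sigma,\pi)$ may introduce ramification along $(\pi)$ (whenever some $a_j$ has a pole at $\pi=0$), after which the hypothesis "ramified only along $(t)$'' is no longer available for the leftover class $\omega-[\sigma,\pi)$, so the global constraint you plan to invoke at the end would not apply to it. Performing the Gersten/integrality step \emph{before} the local normalization, as the paper does, is what guarantees the coefficients stay in $\pi\cdot k[[\pi]]$ throughout and makes the Hensel argument available.
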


\begin{proof}
By Theorem \ref{Gersten}, we have the exact sequence 
\[
\xymatrix{
0\ar[r]& H^1(X,\Omega^1_{X,\text{log}})\ar[r]& H^1(K,\Omega^1_{K,\text{log}}) \ar[r]^-{\delta_1}&  \bigoplus\limits_{x\in X^1} H^2_x(X,\Omega^1_{X,\text{log}})\ar[r]^-{\delta_2}&  H^3_P(X,\Omega^1_{X,\text{log}})
\ar[r] & 0.}
\]
Since $X$ is affine and regular, $H^1(X_\text{\'et},\Omega^1_{X,\text{log}})\simeq\text{Br}(R)[p]\simeq\text{Br}(k)[p]=0$. So the last exact sequence reduces to
\[
\xymatrix{
0\ar[r]& H^1(K,\Omega^1_{K,\text{log}})\ar[r]^-{\delta_1}&  \bigoplus\limits_{x\in X^1} H^2_x(X,\Omega^1_{X,\text{log}})\ar[r]^-{\delta_2}&  H^3_P(X,\Omega^1_{X,\text{log}})
\ar[r] & 0.}
\]
Our goal is to find a symbol algebra that represents $\omega$ in the Brauer group of $K$. Since $\textup{sw}_{K,{(t)}}(\omega)=m< p$, by Theorem \ref{structure},
we have that
\[
\omega=[\frac{f_m}{t^m},\pi)+[\frac{f_{m-1}}{t^{m-1}},\pi)+\cdots+[f_0,t) \ \text{in} \ \text{Br}(K)[p] ,
\]
where $f_i\in \pi \cdot k[[\pi]]$ for all $i$. The choices of $f_i$ follow from the identification in (\ref{h3log}). Since that $\pi$ is a regular element, we have  a consistent way to lift the elements from $k((\pi))$.

Since $f_0\in k[[\pi]]$  and $k$ is algebraically closed, by Hensel's lemma, we have $[f_0,t)\simeq 0$. Hence, $\omega=\displaystyle [\frac{f_m}{t^m}+\cdots+\frac{f_1}{t},\pi)$.
\end{proof}

Now we are ready to finish the proof of Theorem \ref{swp} based on Theorem \ref{patching}.

\begin{proof}[Proof of Theorem \ref{swp}]
    $ $\\
    First, suppose $\zeta=U$ for some irreducible component $U$ of $Y\setminus S$. Let $\eta$ be the generic point of $U$. Then $U\subset U_\eta$. Since $F_{U_\eta} \subset F_U$, $\text{ind}(D\otimes_F F_U)\mid \text{ind}(D\otimes_F F_{U_\eta})= \textup{ind}(D\otimes_F F_\eta)$.  Since the residue field of the generic point of $U$ is a function field of the curve over an algebraically closed field, by Theorem \ref{sc1}, we have $\text{ind}(D\otimes F_\eta)\vert p$. Hence, $\text{ind}(D\otimes_F F_U)\mid p$.

Second, suppose $\zeta=P\in S$, where $P$ is a closed point of $\mathcal{X}$. Combining Proposition \ref{localest} and Theorem \ref{localperind}, we have $\text{ind}(D\otimes F_\zeta)\vert p$.

Finally, by Theorem \ref{patching}, we have that $\text{per}(\omega)=\text{ind}(\omega)$.
\end{proof}

\bibliographystyle{plain} 
\begin{bibdiv}
\begin{biblist}

\bib{StableOrders}{article}{
      author={Artin, Michael},
      author={de~Jong, Aise~Johan},
       title={Stale orders over surfaces},
        date={2003. Preprint},
}

\bib{MR3818286}{article}{
      author={Aravire, Roberto},
      author={Jacob, Bill},
      author={O'Ryan, Manuel},
       title={The de {R}ham {W}itt complex, cohomological kernels and {$p^m$}-extensions in characteristic {$p$}},
        date={2018},
        ISSN={0022-4049},
     journal={J. Pure Appl. Algebra},
      volume={222},
      number={12},
       pages={3891\ndash 3945},
         url={https://doi-org.proxy2.cl.msu.edu/10.1016/j.jpaa.2018.02.013},
      review={\MR{3818286}},
}

\bib{MR0000595}{book}{
      author={Albert, A.~Adrian},
       title={Structure of {A}lgebras},
      series={American Mathematical Society Colloquium Publications, Vol. 24},
   publisher={American Mathematical Society, New York},
        date={1939},
      review={\MR{0000595}},
}

\bib{MR321934}{article}{
      author={Artin, M.},
      author={Mumford, D.},
       title={Some elementary examples of unirational varieties which are not rational},
        date={1972},
        ISSN={0024-6115},
     journal={Proc. London Math. Soc. (3)},
      volume={25},
       pages={75\ndash 95},
         url={https://doi-org.proxy2.cl.msu.edu/10.1112/plms/s3-25.1.75},
      review={\MR{321934}},
}

\bib{MR0657430}{incollection}{
      author={Artin, M.},
       title={Brauer-{S}everi varieties},
        date={1982},
   booktitle={Brauer groups in ring theory and algebraic geometry ({W}ilrijk, 1981)},
      series={Lecture Notes in Math},
      volume={917},
   publisher={Springer, Berlin-New York},
       pages={pp 194\ndash 210},
      review={\MR{657430}},
}

\bib{MR0849653}{article}{
      author={Bloch, Spencer},
      author={Kato, Kazuya},
       title={{$p$}-adic \'{e}tale cohomology},
        date={1986},
        ISSN={0073-8301,1618-1913},
     journal={Inst. Hautes \'{E}tudes Sci. Publ. Math.},
      number={63},
       pages={107\ndash 152},
         url={http://www.numdam.org/item?id=PMIHES_1986__63__107_0},
      review={\MR{849653}},
}

\bib{MR1994218}{book}{
      author={Bourbaki, Nicolas},
       title={Algebra {II}. {C}hapters 4--7},
     edition={English},
      series={Elements of Mathematics (Berlin)},
   publisher={Springer-Verlag, Berlin},
        date={2003},
        ISBN={3-540-00706-7},
         url={https://doi.org/10.1007/978-3-642-61698-3},
      review={\MR{1994218}},
}

\bib{MR3959863}{article}{
      author={\v{C}esnavi\v{c}ius, Kestutis},
       title={Purity for the {B}rauer group},
        date={2019},
        ISSN={0012-7094},
     journal={Duke Math. J.},
      volume={168},
      number={8},
       pages={1461\ndash 1486},
         url={https://doi-org.proxy2.cl.msu.edu/10.1215/00127094-2018-0057},
      review={\MR{3959863}},
}

\bib{MR4453883}{article}{
      author={Chipchakov, Ivan},
       title={Henselian discrete valued stable fields},
        date={2022},
        ISSN={1300-0098},
     journal={Turkish J. Math.},
      volume={46},
      number={5},
       pages={1735\ndash 1748},
      review={\MR{4453883}},
}

\bib{MR4176776}{article}{
      author={Chapman, Adam},
      author={McKinnie, Kelly},
       title={Essential dimension, symbol length and {$p$}-rank},
        date={2020},
        ISSN={0008-4395,1496-4287},
     journal={Canad. Math. Bull.},
      volume={63},
      number={4},
       pages={882\ndash 890},
         url={https://doi.org/10.4153/s0008439520000119},
      review={\MR{4176776}},
}

\bib{MR1738056}{article}{
      author={Geisser, Thomas},
      author={Levine, Marc},
       title={The {$K$}-theory of fields in characteristic {$p$}},
        date={2000},
        ISSN={0020-9910,1432-1297},
     journal={Invent. Math.},
      volume={139},
      number={3},
       pages={459\ndash 493},
         url={https://doi.org/10.1007/s002220050014},
      review={\MR{1738056}},
}

\bib{MR3727161}{book}{
      author={Gille, Philippe},
      author={Szamuely, Tam\'{a}s},
       title={Central simple algebras and {G}alois cohomology},
     edition={Second},
      series={Cambridge Studies in Advanced Mathematics},
   publisher={Cambridge University Press, Cambridge},
        date={2017},
      volume={165},
        ISBN={978-1-316-60988-0; 978-1-107-15637-1},
      review={\MR{3727161}},
}

\bib{MR2545681}{article}{
      author={Harbater, David},
      author={Hartmann, Julia},
      author={Krashen, Daniel},
       title={Applications of patching to quadratic forms and central simple algebras},
        date={2009},
        ISSN={0020-9910},
     journal={Invent. Math.},
      volume={178},
      number={2},
       pages={231\ndash 263},
         url={https://doi-org.proxy2.cl.msu.edu/10.1007/s00222-009-0195-5},
      review={\MR{2545681}},
}

\bib{MR3432268}{article}{
      author={Harbater, David},
      author={Hartmann, Julia},
      author={Krashen, Daniel},
       title={Local-global principles for torsors over arithmetic curves},
        date={2015},
        ISSN={0002-9327,1080-6377},
     journal={Amer. J. Math.},
      volume={137},
      number={6},
       pages={1559\ndash 1612},
         url={https://doi.org/10.1353/ajm.2015.0039},
      review={\MR{3432268}},
}

\bib{MR565469}{article}{
      author={Illusie, Luc},
       title={Complexe de de {R}ham-{W}itt et cohomologie cristalline},
        date={1979},
        ISSN={0012-9593},
     journal={Ann. Sci. \'{E}cole Norm. Sup. (4)},
      volume={12},
      number={4},
       pages={501\ndash 661},
         url={http://www.numdam.org.proxy2.cl.msu.edu/item?id=ASENS_1979_4_12_4_501_0},
      review={\MR{565469}},
}

\bib{MR1386649}{incollection}{
      author={Izhboldin, O.~T.},
       title={On the cohomology groups of the field of rational functions},
        date={1996},
   booktitle={Mathematics in {S}t. {P}etersburg},
      series={Amer. Math. Soc. Transl. Ser. 2},
      volume={174},
   publisher={Amer. Math. Soc., Providence, RI},
       pages={21\ndash 44},
         url={https://doi.org/10.1090/trans2/174/03},
      review={\MR{1386649}},
}

\bib{MR550688}{article}{
      author={Kato, Kazuya},
       title={A generalization of local class field theory by using {$K$}-groups. {I}},
        date={1979},
        ISSN={0040-8980},
     journal={J. Fac. Sci. Univ. Tokyo Sect. IA Math.},
      volume={26},
      number={2},
       pages={303\ndash 376},
      review={\MR{550688}},
}

\bib{MR689394}{incollection}{
      author={Kato, Kazuya},
       title={Galois cohomology of complete discrete valuation fields},
        date={1982},
   booktitle={Algebraic {$K$}-theory, {P}art {II} ({O}berwolfach, 1980)},
      series={Lecture Notes in Math.},
      volume={967},
   publisher={Springer, Berlin-New York},
       pages={215\ndash 238},
      review={\MR{689394}},
}

\bib{MR991978}{incollection}{
      author={Kato, Kazuya},
       title={Swan conductors for characters of degree one in the imperfect residue field case},
        date={1989},
   booktitle={Algebraic {$K$}-theory and algebraic number theory ({H}onolulu, {HI}, 1987)},
      series={Contemp. Math.},
      volume={83},
   publisher={Amer. Math. Soc., Providence, RI},
       pages={101\ndash 131},
         url={https://doi-org.proxy2.cl.msu.edu/10.1090/conm/083/991978},
      review={\MR{991978}},
}

\bib{MR0046388}{article}{
      author={Lang, Serge},
       title={On quasi algebraic closure},
        date={1952},
        ISSN={0003-486X},
     journal={Ann. of Math. (2)},
      volume={55},
       pages={373\ndash 390},
         url={https://doi.org/10.2307/1969785},
      review={\MR{46388}},
}

\bib{MR0491722}{article}{
      author={Lipman, Joseph},
       title={Desingularization of two-dimensional schemes},
        date={1978},
        ISSN={0003-486X},
     journal={Ann. of Math. (2)},
      volume={107},
      number={1},
       pages={151\ndash 207},
         url={https://doi.org/10.2307/1971141},
      review={\MR{491722}},
}

\bib{MR1917232}{book}{
      author={Liu, Qing},
       title={Algebraic geometry and arithmetic curves},
      series={Oxford Graduate Texts in Mathematics},
   publisher={Oxford University Press, Oxford},
        date={2002},
      volume={6},
        ISBN={0-19-850284-2},
        note={Translated from the French by Reinie Ern\'{e}, Oxford Science Publications},
      review={\MR{1917232}},
}

\bib{MR3413868}{article}{
      author={Matzri, Eliyahu},
       title={Symbol length in the {B}rauer group of a field},
        date={2016},
        ISSN={0002-9947},
     journal={Trans. Amer. Math. Soc.},
      volume={368},
      number={1},
       pages={413\ndash 427},
         url={https://doi-org.proxy2.cl.msu.edu/10.1090/tran/6326},
      review={\MR{3413868}},
}

\bib{milne2022}{book}{
      author={Milne, J.~S.},
       title={Fields and galois theory},
   publisher={Kea Books},
     address={Ann Arbor, MI},
        date={2022},
}

\bib{MR0559531}{book}{
      author={Milne, James~S.},
       title={\'{E}tale cohomology},
   publisher={Princeton University Press, Princeton, N.J.},
        date={1980},
      volume={No. 33.},
        ISBN={0-691-08238-3},
      review={\MR{559531}},
}

\bib{MR675529}{article}{
      author={Merkur'ev, A.~S.},
      author={Suslin, A.~A.},
       title={{$K$}-cohomology of {S}everi-{B}rauer varieties and the norm residue homomorphism},
        date={1982},
        ISSN={0373-2436},
     journal={Izv. Akad. Nauk SSSR Ser. Mat.},
      volume={46},
      number={5},
       pages={1011\ndash 1046, 1135\ndash 1136},
      review={\MR{675529}},
}

\bib{MR3219517}{article}{
      author={Parimala, R.},
      author={Suresh, V.},
       title={Period-index and {$u$}-invariant questions for function fields over complete discretely valued fields},
        date={2014},
        ISSN={0020-9910,1432-1297},
     journal={Invent. Math.},
      volume={197},
      number={1},
       pages={215\ndash 235},
         url={https://doi.org/10.1007/s00222-013-0483-y},
      review={\MR{3219517}},
}

\bib{MR1462850}{article}{
      author={Saltman, David~J.},
       title={Division algebras over {$p$}-adic curves},
        date={1997},
        ISSN={0970-1249,2320-3110},
     journal={J. Ramanujan Math. Soc.},
      volume={12},
      number={1},
       pages={25\ndash 47},
      review={\MR{1462850}},
}

\bib{MR554237}{book}{
      author={Serre, Jean-Pierre},
       title={Local fields},
      series={Graduate Texts in Mathematics},
   publisher={Springer-Verlag, New York-Berlin},
        date={1979},
      volume={67},
        ISBN={0-387-90424-7},
        note={Translated from the French by Marvin Jay Greenberg},
      review={\MR{554237}},
}

\bib{MR2396000}{article}{
      author={Shiho, Atsushi},
       title={On logarithmic {H}odge-{W}itt cohomology of regular schemes},
        date={2007},
        ISSN={1340-5705},
     journal={J. Math. Sci. Univ. Tokyo},
      volume={14},
      number={4},
       pages={567\ndash 635},
      review={\MR{2396000}},
}

\bib{stacks-project}{misc}{
      author={{Stacks project authors}, The},
       title={\href{https://stacks.math.columbia.edu}{The Stacks project}},
        date={2023},
}

\bib{MR4411477}{article}{
      author={Totaro, Burt},
       title={Cohomological invariants in positive characteristic},
        date={2022},
        ISSN={1073-7928},
     journal={Int. Math. Res. Not. IMRN},
      number={9},
       pages={7152\ndash 7201},
         url={https://doi-org.proxy2.cl.msu.edu/10.1093/imrn/rnaa321},
      review={\MR{4411477}},
}

\bib{MR1626092}{article}{
      author={Yamazaki, Takao},
       title={Reduced norm map of division algebras over complete discrete valuation fields of certain type},
        date={1998},
        ISSN={0010-437X},
     journal={Compositio Math.},
      volume={112},
      number={2},
       pages={127\ndash 145},
         url={https://doi-org.proxy2.cl.msu.edu/10.1023/A:1000439025718},
      review={\MR{1626092}},
}

\end{biblist}
\end{bibdiv}

\end{document}